\journal{}
\newcommand{\eps}{\varepsilon}
\newcommand{\set}[1]{\left\{#1\right\}}
\newcommand{\abs}[1]{\left|#1\right|}
\newcommand{\p}{\partial}
\newcommand{\rd}{\mathrm{d}}
\newcommand{\ma}{\mathbf{a}}
\newcommand{\mb}{\mathbf{b}}
\newcommand{\mc}{\mathbf{c}}
\newcommand{\mr}{\mathbf{r}}
\newcommand{\mA}{\mathcal{A}}
\newcommand{\mE}{\mathbf{E}}
\newcommand{\mF}{\mathbf{F}}
\newcommand{\mG}{\mathbf{G}}
\newcommand{\mH}{\mathbf{H}}
\newcommand{\vn}{\boldsymbol{\nu}}
\newcommand{\vt}{\boldsymbol{\theta}}
\newcommand{\vv}{\boldsymbol{\vartheta}}
\DeclareMathOperator*{\inc}{inc}
\DeclareMathOperator*{\osm}{OSM}
\DeclareMathOperator*{\fsm}{FSM}
\DeclareMathOperator*{\msm}{MSM}
\DeclareMathOperator*{\scat}{scat}
\DeclareMathOperator*{\area}{area}
\theoremstyle{plain}
\newtheorem{theorem}{Theorem}[section]
\newtheorem{lemma}{Lemma}[section]
\newtheorem{corollary}{Corollary}[section]
\newtheorem{remark}{Remark}[section]
\theoremstyle{remark}
\newtheorem{example}{Example}[section]
\begin{document}

\begin{frontmatter}



\title{Real-time inversion of two-dimensional Fresnel experimental database using orthogonality sampling method with single and multiple sources: the case of transverse electric polarized waves}
\author[HU]{Junyong Eom}
\ead{eom@es.hokudai.ac.jp}
\address[HU]{Research Institute for Electronic Science, Hokkaido University, Sapporo, 001-0020, Japan}
\author[PNU]{Sangwoo Kang\corref{corKang}}
\ead{sangwoo.kang@pusan.ac.kr}
\address[PNU]{Graduate School of Data Science, Pusan National University, Busan, 46241, Korea}
\author[KMU]{Minyeob Lee}
\ead{lmmmy0319@kookmin.ac.kr}
\author[KMU]{Won-Kwang Park\corref{corPark}}
\ead{parkwk@kookmin.ac.kr}
\address[KMU]{Department of Mathematics, Kookmin University, Seoul, 02707, Korea}
\cortext[corKang]{Co-oorresponding author}
\cortext[corPark]{Corresponding author}

\begin{abstract}
This paper concerns an application of the orthogonality sampling method (OSM) for a real-time identification of small objects from two-dimensional Fresnel experimental dataset in transverse electric polarization. First, we apply the OSM with a single source by designing an indicator function based on the asymptotic expansion formula for the scattered field in the presence of small objects. We demonstrate that the indicator function can be expressed by an infinite series of Bessel functions of integer order of the first kind, the range of the signal receiver, and the location of the emitter. Based on this, we then investigate the applicability and limitations of the designed OSM. Specifically, we find that the imaging performance is strongly dependent on the source and the applied frequency. We then apply the OSM with multiple sources to improve imaging performance. Based on the identified structure of the OSM with a single source, we design an indicator function with multiple sources and demonstrate that it can be expressed by an infinite series of the Bessel function of integer order of the first kind, and we explain that objects can be identified uniquely using the designed OSM. Numerical simulation results obtained with the Fresnel experimental dataset demonstrate the advantages and disadvantages of the OSM with a single source and confirm that the designed OSM with multiple sources improves imaging performance.
\end{abstract}

\begin{keyword}
Orthogonality sampling method \sep limited-aperture inverse scattering problem \sep Bessel functions \sep Fresnel experimental dataset


\end{keyword}

\end{frontmatter}






\section{Introduction}\label{sec:1}
The development of an effective and stable object detection and imaging technique is an important research subject in mathematics, engineering, physics, and geophysics because it has significant and valuable applications in various fields, including nondestructive evaluation \cite{FMGD,YCYZ}, biomedical imaging \cite{A1,A2}, geophysics \cite{P5,Z2}, astronomy \cite{CB2,SST}, and optics \cite{L8,MB}. To address this problem, various iterative (or quantitative) and noniterative (or qualitative) techniques have been investigated.

The orthogonality sampling method (OSM) is a noniterative technique to identify the location of scatterers using one or more incident fields \cite{P1}. Throughout several studies, the OSM has emerged as a potential technique that offers an effective balance between computational simplicity and accuracy for the detection of small objects in the inverse scattering problem \cite{ACP,G1,HN,KCP1} and microwave imaging \cite{A5,ACKLPS,P-OSM1,P-OSM2}. The OSM has been applied successfully to retrieve unknown objects from the Fresnel experimental dataset \cite{BIPAC,LNST,P-OSM3}; however, most studies focused on applying the OSM in transverse magnetic (TM) polarization. To the best of our knowledge, no theoretical study of the OSM has been conducted to retrieve small objects in transverse electric (TE) polarization with Fresnel dataset.

Thus, in this paper, we consider the application of the OSM to identify the existence, location, and shape of an unknown small object from a two-dimensional Fresnel experimental dataset in TE polarization. First, we design a new indicator function with a single source based on the fact that the measured scattered field data can be represented by an asymptotic expansion formula. To explain the applicability and fundamental limitation of object localization, we demonstrate that this indicator function can be represented by an infinite series of Bessel functions of integer order of the first kind, the range of the signal receiver, and the emitter location. This allows us to determine that it is possible to identify small objects using the designed indicator function; however, its imaging performance is strongly dependent on the applied frequency of operation and the emitter location. To verify the theoretical result, we present the results of a numerical simulation and examine some phenomena.

Then, to improve the imaging performance, we apply the OSM with multiple sources. Here, we design an indicator function with multiple sources and extensively demonstrate that the indicator function can be expressed by an infinite series of the Bessel functions of integer order of the first kind. Based on the theoretical result, we demonstrate that the designed indicator function improves the imaging performance and that the object can be identified uniquely. We also present simulation results to support the theoretical results and briefly introduce the imaging results with multiple sources and frequencies.

The remainder of this paper is organized as follows. In Section \ref{sec:2}, we briefly introduce the direct scattering problem and the asymptotic expansion formula in the presence of small objects. In Section \ref{sec:3}, we present the indicator function designed with a single source, analyze its structure by establishing a relationship with an infinite series of Bessel functions of integer order of the first kind, the range of the signal receiver, and the emitter location, and we explain some of the properties of the imaging results. In Section \ref{sec:4}, numerical simulation results obtained with the Fresnel experimental dataset are presented to support the theoretical result. Then, in Section \ref{sec:5}, we present the indicator function designed with multiple sources, reveal its mathematical structure by establishing a relationship with an infinite series of Bessel functions of integer order of the first kind, and we explain some properties of the imaging results. In Section \ref{sec:6}, numerical simulation results with multiple sources and frequencies are presented. Finally, the paper is concluded in Section \ref{sec:7}, including suggestions for future work.

Finally, let us emphasize that the result obtained using the OSM does not guarantee the complete shape of the object. Fortunately, once the outline shape of the object is identified, it can be adopted as an initial guess, which can then be made more accurate by applying iterative schemes, e.g., the level set method \cite{RLD}, the contrast source inversion method \cite{BAB}, the Bayesian method \cite{BPV2}, or the distorted-wave Born approach \cite{TBLH}.

\section{Direct scattering problem and asymptotic expansion formula}\label{sec:2}
Assume that there exist two-dimensional small objects $D_s$, $s=1,2,\ldots,S$ in a homogeneous region $\Omega\subset\mathbb{R}^2$. Throughout this paper, we assume that all $D_s$ are small ball with radius $\alpha_s$ centered at the $\mr_s$ and well separated from each others, and $\Omega$ is a subset of the interior of an anechoic chamber. Here, the values of the background conductivity, permeability, and permittivity are $\sigma_0\approx0$, $\mu_0=4\pi\times \SI{e-7}{\henry/\meter}$, and $\eps_0=\SI{8.854e-12}{\farad/\meter}$, respectively, and we denote $k=\omega\sqrt{\eps_0\mu_0}$ as the background wavenumber. In addition, each $D_s$ and $\Omega$ are characterized by the magnetic permeability value at the given angular frequency $\omega=2\pi f$. We denote $\mu_s$ as the permeability value of $D_s$ and introduce the following piecewise constant:
\[\mu(\mr)=\left\{\begin{array}{ccl}
\mu_s&\text{for}&\mr\in D_s\\
\mu_0&\text{for}&\mr\in\Omega\backslash\overline{D}.
\end{array}\right.\]

Note that we adopt the emitter and receiver arrangement setting from the literature \cite{BS}. Specifically, we denote $\ma_m$ and $\mb_{m,n}$ as the locations of the $m$th emitter $\mA_m$, where $m=1,2,\ldots,M$, and the corresponding $n$th receiver $\mathcal{B}_{m,n}$, where $n=1,2,\ldots,N$, respectively. Here, $\ma_m$ and $\mb_{m,n}$ can be expressed as follows:
\[\ma_m=|\ma_m|(\cos\vartheta_m,\sin\vartheta_m)=|\ma_m|\vv_m\quad\text{with}\quad|\ma_m|\equiv A=\SI{0.72}{\meter},\quad\vartheta_m=\frac{2(m-1)\pi}{M}\]
and
\[\mb_{m,n}=|\mb_{m,n}|(\cos\theta_{m,n},\sin\theta_{m,n})=|\mb_{m,n}|\vt_{m,n}\quad\text{with}\quad|\mb_{m,n}|\equiv B=\SI{0.76}{\meter},\quad\theta_{m,n}=\vartheta_m+\frac{\pi}{3}+\frac{4(n-1)\pi}{3(N-1)},\]
respectively. Here, $\vv_m\in\mathbb{S}^1$, $\vt_{m,n}\in\mathbb{S}_m^1$, where $\mathbb{S}^1$ denotes the unit circle centered at the origin, and \[\mathbb{S}_m^1=\set{(\cos\theta,\sin\theta):\vartheta_m+\frac{\pi}{3}\leq\theta\leq\vartheta_m+\frac{5\pi}{3}}\subset\mathbb{S}^1.\]
For an illustration, refer to Figure \ref{Configuration_Fresnel}. Then, the incident field at the fixed-point source $\mathcal{A}_m$ can be expressed as
\[u_{\inc}(\mr,\ma_m)=-\frac{i}{4}H_0^{(1)}(k|\mr-\ma_m|):=G(\mr,\ma_m),\quad\mr\in\Omega,\]
where $H_0^{(1)}$ denotes the Hankel function of order zero of the first kind. Correspondingly, the time-harmonic total field $u(\mb_{m,n},\mr)$ measured at the $n$th receiver $\mb_{m,n}$ satisfies
\[\nabla\cdot\left(\frac{1}{\mu(\mr)}\nabla u(\mb_{m,n},\mr)\right)+\omega^2\eps_0u(\mb_{m,n},\mr)=0\quad\text{for}\quad\mr\in\Omega\]
with the transmission condition that the boundary $\p D_s$, $s=1,2,\ldots,S$ (see \cite{AK2} for instance). In addition, the time harmonic $e^{-i\omega t}$ is assumed in this case.

Let $u_{\scat}(\mb_{m,n},\mr)$ denote the scattered field corresponding to the incident field. Then, $u_{\scat}(\mb_{m,n},\mr)$ can be expressed by the double-layer potential with an unknown density function $\psi$:
\[u_{\scat}(\mb_{m,n},\mr)=u(\mb_{m,n},\ma_m)-u_{\inc}(\mr,\ma_m)=\int_D \frac{\p G(\mb_{m,n},\mr)}{\p\vn(\mr)}\psi(\mr,\ma_m)\rd\mr,\]
where $\vn(\mr)$ is the unit outward normal at $\mr$. We refer to \cite{CK} for a detailed description. Note that without priori information of $D$, the closed form of the density function $\psi(\mr,\ma_m)$ is unknown; thus, it is inappropriate to employ $u_{\scat}(\mb_{m,n},\mr)$ directly to design the indicator function of the OSM. Therefore, we employ the following asymptotic expansion formula to design and analyze the structure of the indicator function.

\begin{lemma}
For sufficiently large $f$, $u_{\scat}(\mb_{m,n},\ma_m)$ can be expressed as follows:
\begin{equation}\label{AsymptoticFormula}
u_{\scat}(\mb_{m,n},\ma_m)=\sum_{s=1}^{S}\alpha_s^2\pi\left(\frac{\mu_0}{\mu_s+\mu_0}\right)\nabla G(\mb_{m,n},\mr_s)\cdot\nabla G(\ma_m,\mr_s)+o(\alpha_s^2).
\end{equation}
\end{lemma}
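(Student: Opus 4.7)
The plan is to expand $u_{\scat}(\mb_{m,n},\ma_m)$ in powers of the small radius $\alpha_s$, starting from the double-layer representation displayed just above the lemma and following the polarization-tensor framework developed by Ammari and Kang for small inhomogeneities. Since each $D_s$ is a disk of radius $\alpha_s$ centered at $\mr_s$, the first move is to rescale the boundary integral via $\mr = \mr_s + \alpha_s\tilde{\mr}$ with $\tilde{\mr}\in\p B_1(0)$, so that the $\alpha_s$-dependence of the measure and of the unknown density $\psi(\cdot,\ma_m)$ becomes explicit and can be tracked order by order.

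Next, I would rewrite the transmission conditions on $\p D_s$ as a boundary integral equation for $\psi$, rescale it to $\p B_1(0)$, and observe that the leading-order operator is the $\alpha_s$-independent Neumann--Poincar\'e operator on the unit circle, with the $k$- and $\alpha_s$-dependent corrections being smooth and of order $O(k^2\alpha_s^2)$. Inverting this leading operator on the Taylor expansion of the incident-field normal derivative $\p u_{\inc}(\mr_s + \alpha_s\tilde{\mr},\ma_m)/\p\vn$ around $\mr_s$ yields the leading density $\psi(\mr_s + \alpha_s\tilde{\mr},\ma_m) = \alpha_s\,c(\mu_s,\mu_0)\,\nabla G(\mr_s,\ma_m)\cdot\tilde{\mr} + o(\alpha_s)$, where $c(\mu_s,\mu_0)$ is the scalar polarization constant determined by the magnetic contrast via the jump relations of the double-layer potential.

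Then, I would Taylor-expand the kernel $\p G(\mb_{m,n},\mr)/\p\vn(\mr)$ about $\mr = \mr_s$ inside the double-layer integral. Substituting both the kernel expansion and the leading density, and using the elementary identity $\int_{\p B_1(0)}\tilde{\mr}\otimes\tilde{\mr}\,\rd\sigma(\tilde{\mr}) = \pi I$, produces the symmetric bilinear form $\nabla G(\mb_{m,n},\mr_s)\cdot\nabla G(\ma_m,\mr_s)$ multiplied by the appropriate scalar. Tracking the two factors of $\alpha_s$ (one from the Jacobian $\rd\sigma(\mr) = \alpha_s\,\rd\sigma(\tilde{\mr})$ and one from the density amplitude) and collecting all numerical constants reproduces the coefficient $\alpha_s^2\pi\,\mu_0/(\mu_s+\mu_0)$ appearing in \eqref{AsymptoticFormula}, which is the scalar magnetic polarization tensor of a disk.

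The main obstacle is the uniform control of the remainder. Both the Taylor truncation of $\p G(\mb_{m,n},\mr)/\p\vn(\mr)$ and the perturbative inversion of the rescaled boundary operator contribute error terms that must together be $o(\alpha_s^2)$. The well-separatedness of the $D_s$ and the fact that the receivers $\mb_{m,n}$ lie at a fixed positive distance from each $\mr_s$ provide uniform bounds on the Taylor remainders of $G$, while the bounded invertibility of the Neumann--Poincar\'e operator on $H^{-1/2}(\p B_1(0))$, together with the smallness of the $O(k^2\alpha_s^2)$ correction, controls the resolvent perturbation. Once these remainder estimates are in hand, summing over $s = 1,\ldots,S$ yields the stated expansion.
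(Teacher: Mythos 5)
The paper itself contains no proof of this lemma: formula \eqref{AsymptoticFormula} is simply imported from the small-inhomogeneity literature, namely the layer-potential framework of Ammari and Kang \cite{AK2}. Your sketch reconstructs precisely the derivation given there --- rescale the boundary-integral system to the unit disk, observe that the leading operator is the Neumann--Poincar\'e operator (which acts trivially on mean-zero densities on a circle, so the inversion is explicit), Taylor-expand the incident field and the kernel $\partial G(\mb_{m,n},\cdot)/\partial\vn$ about $\mr_s$, and use $\int_{\partial B_1(0)}\tilde{\mr}\otimes\tilde{\mr}\,\rd\sigma=\pi I$ to produce the dipole term $\nabla G(\mb_{m,n},\mr_s)\cdot\nabla G(\ma_m,\mr_s)$ --- so your approach is sound and is exactly the one the authors are implicitly invoking. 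One caveat: the computation you describe yields, for a disk, the magnetic polarization tensor $2\pi\alpha_s^2\,(\mu_0-\mu_s)/(\mu_0+\mu_s)\,I$ (up to sign and contrast conventions), which vanishes when $\mu_s=\mu_0$ as it physically must; the coefficient $\pi\alpha_s^2\,\mu_0/(\mu_s+\mu_0)$ stated in the lemma does not vanish in that limit, so your assertion that the calculation ``reproduces'' the stated constant is too generous --- the discrepancy lies in the paper's constant, not in your argument. Since only the bilinear structure $\nabla G\cdot\nabla G$, and not the multiplicative constant, enters the subsequent analysis of the indicator functions, this does not affect anything downstream.
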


\begin{figure}[h]
\begin{center}
\begin{tikzpicture}[scale=2]
\def\RT{0.72*2};
\def\RR{0.76*2};
\def\Edge{0.15*2};

\draw[black,dashed,-stealth] (0,0) -- ({\RT*cos(0)},{\RT*sin(0)});
\draw[black,dashed,-] (0,0) -- ({\RT*cos(30)},{\RT*sin(30)});
\draw[black,dashed,-] (0,0) -- ({\RR*cos(90)},{\RR*sin(90)});
\draw[black,dashed,-] (0,0) -- ({\RR*cos(330)},{\RR*sin(330)});

\draw[black,solid,-stealth] ({0.25*cos(30)},{0.25*sin(30)}) arc (30:90:0.25);
\node[black] at ({0.5*cos(70)},{0.45*sin(70)}) {$\displaystyle\frac{\pi}{3}$};

\draw[black,solid,-stealth] ({0.2*cos(90)},{0.2*sin(90)}) arc (90:330:0.2);
\node[black] at ({0.35*cos(180)},{0.4*sin(180)}) {$\displaystyle\frac{4\pi}{3}$};

\draw[black,solid,-stealth] ({0.35*cos(0)},{0.35*sin(0)}) arc (0:30:0.35);
\node[black] at ({0.5*cos(30)},{0.22*sin(30)}) {$~~\vartheta_m$};

\draw[green,fill=green] ({\RT*cos(30)},{\RT*sin(30)}) circle (0.05cm) node[above,black,yshift=2] {$\mathcal{A}_m$};
\draw[black,fill=black] ({\RT*cos(30)},{\RT*sin(30)}) circle (0.02cm);

\foreach \beta in {90,95,...,330}
{\draw[red,fill=red] ({\RR*cos(\beta)},{\RR*sin(\beta)}) circle (0.05cm);
\draw[black,fill=black] ({\RR*cos(\beta)},{\RR*sin(\beta)}) circle (0.02cm);}

\node[right,xshift=2] at (0,\RR) {$\mathcal{B}_{m,1}$};
\node[above,yshift=2] at ({\RR*cos(330)},{\RR*sin(330)}) {$\mathcal{B}_{m,N}$};

\draw[green,fill=green] (-0.5,-0.5) circle (0.05cm);
\draw[black,fill=black] (-0.5,-0.5) circle (0.02cm);
\node[right] at (-0.5,-0.5) {\texttt{~emitter}};
\draw[red,fill=red] (-0.5,-0.7) circle (0.05cm);
\draw[black,fill=black] (-0.5,-0.7) circle (0.02cm);
\node[right] at (-0.5,-0.7) {\texttt{~receiver}};
\end{tikzpicture}
\quad
\begin{tikzpicture}[scale=2]
\def\RT{0.72*2};
\def\RR{0.76*2};
\def\Edge{0.15*2};

\draw[green!20!white,fill=green!20!white] ({\RT*cos(30)},{\RT*sin(30)}) circle (0.05cm) node[above,black,yshift=2] {$\mathcal{A}_m$};
\draw[black!20!white,fill=black!20!white] ({\RT*cos(30)},{\RT*sin(30)}) circle (0.02cm);

\foreach \beta in {90,95,...,330}
{\draw[red!20!white,fill=red!20!white] ({\RR*cos(\beta)},{\RR*sin(\beta)}) circle (0.05cm);
\draw[black!20!white,fill=black!20!white] ({\RR*cos(\beta)},{\RR*sin(\beta)}) circle (0.02cm);}

\node[right,xshift=2] at (0,\RR) {$\mathcal{B}_{m,1}$};
\node[above,yshift=2] at ({\RR*cos(330)},{\RR*sin(330)}) {$\mathcal{B}_{m,N}$};

\draw[cyan,solid,line width=2pt,-] ({\RR*cos(90)},{\RR*sin(90)}) arc (90:330:\RR);

\draw[cyan,solid,line width=2pt] (-0.5,-0.7) -- (-0.05,-0.7);
\node[right] at (-0.0,-0.7) {$\mathbb{S}_m^1$};
\end{tikzpicture}
\caption{\label{Configuration_Fresnel}Receiver arrangements corresponding to the emitter location.}
\end{center}
\end{figure}
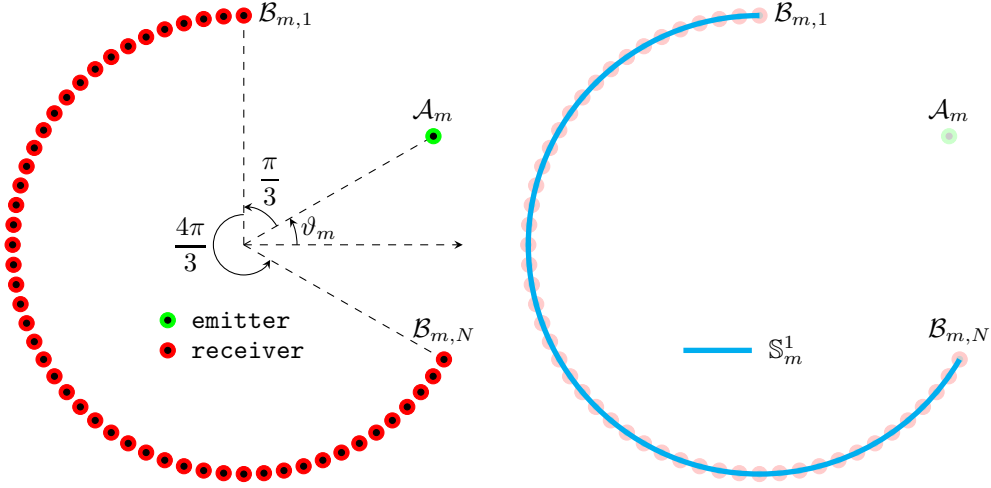
\section{Indicator function with a single source}\label{sec:3}
In the following, we consider the design of an indicator function with a single source. Here, let $\mE(m)$ be the following arrangement of the measured scattered field data with emitter $\mathcal{A}_m$:
\begin{equation}\label{ArrangementE}
\mE(m)=\Big(u_{\scat}(\mb_{m,1},\ma_m),u_{\scat}(\mb_{m,2},\ma_m),\ldots,u_{\scat}(\mb_{m,N},\ma_m)\Big).
\end{equation}
Then, applying \eqref{AsymptoticFormula} to \eqref{ArrangementE} yields
\[\mE(m)=\sum_{s=1}^{S}k^2\area(D_s)\left(\frac{\eps_s-\eps_0}{\eps_0\mu_0}\right)G(\mr_s,\ma_m)
\begin{pmatrix}
\nabla G(\mb_{m,1},\mr_s)\cdot\nabla G(\ma_m,\mr_s)\\
\nabla G(\mb_{m,2},\mr_s)\cdot\nabla G(\ma_m,\mr_s)\\
\vdots\\
\nabla G(\mb_{m,N},\mr_s)\cdot\nabla G(\ma_m,\mr_s)
\end{pmatrix}^T.\]

Thus, it appears natural to test the orthogonality relation between $u_{\scat}(\mb_{m,n},\ma_m)$ and $\nabla G(\mb_{m,N},\cdot)\cdot\nabla G(\ma_m,\cdot)$. Therefore, we introduce the following test vector: for each $\mr\in\Omega$,
\begin{equation}\label{TestVector1}
\mG(\mr)=\Big(\nabla G(\mb_{m,1},\mr)\cdot\nabla G(\ma_m,\mr),\nabla G(\mb_{m,2},\mr)\cdot\nabla G(\ma_m,\mr),\ldots,\nabla G(\mb_{m,N},\mr)\cdot\nabla G(\ma_m,\mr)\Big)
\end{equation}
and the corresponding indicator function
\begin{equation}\label{ImagingFunction_Single}
\mathfrak{F}_{\osm}(\mr,m)=|\mE(m)\cdot\overline{\mG(\mr)}|=\left|\sum_{n=1}^{N}u_{\scat}(\mb_{m,n},\ma_m)\overline{\nabla G(\mb_{m,n},\mr)\cdot\nabla G(\ma_m,\mr)}\right|.
\end{equation}
Then, the map of $\mathfrak{F}_{\osm}(\mr,m)$ will contain large magnitude peaks at $\mr=\mr_s\in D_s$; thus, it will be possible to recognize the existence or outline shape of $D_s$, $s=1,2,\ldots,S$.

\begin{remark}[Test vectors]
Rather than using \eqref{TestVector1}, different test vectors were adopted to design indicator functions of the OSM. For example, in the literature \cite{EP}, the following test vector, which was motivated by the imaging in TM polarization, was considered:
\[\mF(\mr)=\Big(G(\mb_{m,1},\mr),G(\mb_{m,2},\mr),\ldots,G(\mb_{m,N},\mr)\Big).\]
In addition, in another study \cite{AIL1}, by regarding $\nabla G(\ma_m,\mr)$ of \eqref{TestVector1} as a nonzero vector $\mc$, the following vector was adopted:
\[\mH(\mr)=\Big(\mc\cdot\nabla G(\mb_{m,1},\mr),\mc\cdot\nabla G(\mb_{m,2},\mr),\ldots,\mc\cdot\nabla G(\mb_{m,N},\mr)\Big).\]
\end{remark}

To explain the feasibility and properties of the $\mathfrak{F}_{\osm}(\mr,m)$, we explore its mathematical structure. To facilitate an effective exploration, we introduce the following relationships derived in the literature \cite{P-SUB17}. For reader's sake, we provide a brief derivation.

\begin{lemma}\label{Lemma_Integration}
Let $\mr=|\mr|(\cos\phi,\sin\phi)\in\mathbb{R}^2$ with $\vt\cdot\mr=|\mr|\cos(\theta-\phi)$ and $\vv\in\mathbb{S}^1$ with $\vt\cdot\vv=\cos(\theta-\vartheta)$. Then, the following relation holds uniformly
\begin{align}
\begin{aligned}\label{EquationBessel1}
\int_{\mathbb{S}_m^1}(\vt\cdot\vv)^2e^{ik\vt\cdot\mr}d\vt&=\int_{\theta_1}^{\theta_N}\cos^2(\theta-\vartheta_m)e^{ix\cos(\theta-\phi)}d\theta\\
&=(\theta_N-\theta_1)\left(\frac{1}{2}J_0(x)-\cos(2\vartheta_m-2\phi)J_2(x)\right)+\frac12\sin(\theta_N-\theta_1)\cos(\theta_N+\theta_1-2\vartheta_m)J_0(x)\\
&+\sum_{q=1}^{\infty}\frac{2i^q}{q}\sin\frac{q(\theta_N-\theta_1)}{2}\cos\frac{q(\theta_N+\theta_1-2\phi)}{2}J_q(x)\\
&+\sum_{q=1}^{\infty}\frac{2i^q}{q+2}\sin\frac{(q+2)(\theta_N-\theta_1)}{2}\cos\frac{(q+2)(\theta_N+\theta_1)-4\vartheta_m-2q\phi}{2}J_q(x)\\
&+\sum_{q=1,q\ne2}^{\infty}\frac{2i^q}{q-2}\sin\frac{(q-2)(\theta_N-\theta_1)}{2}\cos\frac{(q-2)(\theta_N+\theta_1)+4\vartheta_m-2q\phi}{2}J_q(x),
\end{aligned}
\end{align}
where $J_q$ denotes the Bessel function of order $q$.
\end{lemma}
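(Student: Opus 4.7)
\medskip

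\noindent\textbf{Proof plan for Lemma \ref{Lemma_Integration}.}

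The strategy is a term-by-term expansion based on two standard identities, followed by elementary integration of pure cosines over the arc $[\theta_1,\theta_N]$. First, I would rewrite the weight as
\[
\cos^2(\theta-\vartheta_m) = \tfrac{1}{2} + \tfrac{1}{2}\cos\bigl(2(\theta-\vartheta_m)\bigr),
\]
and insert the Jacobi--Anger expansion in the cosine form
\[
e^{ix\cos(\theta-\phi)} = J_0(x) + 2\sum_{q=1}^{\infty} i^q J_q(x)\cos\bigl(q(\theta-\phi)\bigr),
\]
so that the integrand becomes a product of $(1+\cos(2(\theta-\vartheta_m)))/2$ with a trigonometric series whose coefficients decay faster than any polynomial in $q$. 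Uniform convergence on compact sets of $x$ (via the bound $|J_q(x)|\leq |x/2|^q/q!$) justifies exchanging sum and integral, which I would state once at the start and use silently afterwards.

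Next, I would split the resulting expression into four explicit pieces: the pure $J_0$ contribution $\frac{1}{2}(\theta_N-\theta_1)J_0(x)$; the contribution of $\frac{1}{2}\cos(2(\theta-\vartheta_m))J_0(x)$; the contribution of $\sum_q i^q J_q(x)\cos(q(\theta-\phi))$; and the cross term $\sum_q i^q J_q(x)\cos(2(\theta-\vartheta_m))\cos(q(\theta-\phi))$. On the cross term I would apply the product-to-sum formula
\[
\cos\bigl(2(\theta-\vartheta_m)\bigr)\cos\bigl(q(\theta-\phi)\bigr) = \tfrac{1}{2}\cos\bigl((q+2)\theta - q\phi - 2\vartheta_m\bigr) + \tfrac{1}{2}\cos\bigl((q-2)\theta - q\phi + 2\vartheta_m\bigr),
\]
which is precisely what produces the $(q+2)$- and $(q-2)$-indexed sums on the right-hand side. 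Every remaining integral is then of the shape $\int_{\theta_1}^{\theta_N}\cos(\alpha\theta+\beta)\,d\theta$, which I would evaluate using the antiderivative and the sum-to-product identity $\sin A-\sin B = 2\cos\tfrac{A+B}{2}\sin\tfrac{A-B}{2}$. This yields the factors $\sin(\tfrac{q(\theta_N-\theta_1)}{2})$, $\sin(\tfrac{(q\pm 2)(\theta_N-\theta_1)}{2})$, and the corresponding midpoint cosines seen in the statement.

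The one genuinely delicate point is the index $q=2$ in the $(q-2)$-sum, where $\alpha=q-2$ vanishes and $\int\cos(0\cdot\theta+\beta)d\theta = \beta_{\text{const}}(\theta_N-\theta_1)$ instead of a $\sin/(q-2)$ expression. I would handle this by extracting the $q=2$ contribution explicitly, evaluating $\tfrac{1}{2}\cos(2\phi-2\vartheta_m)(\theta_N-\theta_1)$ with the coefficient $i^2 J_2(x)=-J_2(x)$, and absorbing the resulting $(\theta_N-\theta_1)\cos(2\vartheta_m-2\phi)J_2(x)$ into the leading bracket alongside $\tfrac{1}{2}(\theta_N-\theta_1)J_0(x)$; the residual second sum then runs over $q\geq 1$, $q\neq 2$, exactly as stated. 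The $J_0$ piece coming from $\cos(2(\theta-\vartheta_m))$ integrates by the same sum-to-product identity to $\tfrac{1}{2}\sin(\theta_N-\theta_1)\cos(\theta_N+\theta_1-2\vartheta_m)J_0(x)$, completing the second line.

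The bulk of the argument is therefore bookkeeping; the main obstacle is keeping the phases and the $q=2$ exceptional term straight when collating the four pieces. Since the reconstructed coefficients exactly match the three infinite series and the two leading terms in the statement, comparison term-by-term yields the claimed identity, with uniformity in $\mr$ (equivalently in $x$ and $\phi$) following from the uniform bound on the Jacobi--Anger tail.
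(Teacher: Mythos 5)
Your plan is essentially the paper's own proof: insert the Jacobi--Anger expansion, use $\cos^2(\theta-\vartheta_m)=\tfrac12+\tfrac12\cos(2\theta-2\vartheta_m)$, convert the cross products to sums, integrate the resulting pure cosines via $\sin A-\sin B=2\sin\tfrac{A-B}{2}\cos\tfrac{A+B}{2}$, and treat $q=2$ separately, with the sum--integral exchange justified exactly as you say. One caveat on your closing claim that the coefficients ``exactly match'': carrying the constants through the product-to-sum step gives antiderivatives $\sin[(q\pm2)\theta+\cdots]/\bigl(4(q\pm2)\bigr)$, hence series coefficients $i^q/(q\pm2)$ and a $J_2$ prefactor $\tfrac12(\theta_N-\theta_1)\cos(2\vartheta_m-2\phi)$ --- half of what the displayed identity (and the paper's own intermediate antiderivative) records --- so a careful execution of your plan would surface a factor-of-$2$ bookkeeping discrepancy in those terms rather than exact agreement; this is an issue with the stated formula, not with your method.
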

\begin{proof}
Based on the uniform convergence of the Jacobi-Anger expansion formula,
\begin{equation}\label{JacobiAnger}
e^{ix\cos\phi}=J_0(x)+2\sum_{q=1}^{\infty}i^qJ_q(x)\cos(q\phi),
\end{equation}
we can obtain
\begin{multline}\label{formula1}
\int_{\mathbb{S}_m^1}(\vt\cdot\vv_m)^2e^{ik\vt\cdot\mr}d\vt=\int_{\theta_1}^{\theta_N}\cos^2(\theta-\vartheta_m)\left(J_0(kr)+2\sum_{q=1}^{\infty}i^qJ_q(kr)\cos(q(\theta-\phi))\right)d\theta\\
=\int_{\theta_1}^{\theta_N}\cos^2(\theta-\vartheta_m)J_0(kr)d\theta+2\sum_{q=1}^{\infty}i^qJ_q(kr)\int_{\theta_1}^{\theta_N}\cos^2(\theta-\vartheta_m)\cos(q(\theta-\phi))d\theta.
\end{multline}
Since
\[\int\cos^2(\theta-\vartheta_m)d\theta=\frac{\theta-\vartheta_m}{2}+\frac{\sin(2\theta-2\vartheta_m)}{4}\quad\text{and}\quad\sin x-\sin y=2\sin\frac{x-y}{2}\cos\frac{x+y}{2}\]
we can obtain
\begin{equation}\label{termI1}
\int_{\theta_1}^{\theta_N}\cos^2(\theta-\vartheta_m)J_0(kr)d\theta=\frac{(\theta_N-\theta_1)}{2}J_0(x)+\frac12\sin(\theta_N-\theta_1)\cos(\theta_N+\theta_1-2\vartheta_m)J_0(x).
\end{equation}
Furthermore, since
\begin{multline*}
\int\cos^2(\theta-\vartheta_m)\cos(q(\theta-\phi))d\theta=\frac12\int(1+\cos(2\theta-2\vartheta_m))\cos(q\theta-q\phi)d\theta\\
=\frac{1}{2q}\sin(q\theta-q\phi)+\frac{\sin[(q+2)\theta-2\vartheta_m-q\phi]}{2(q+2)}+\left\{
\begin{array}{ccc}
\displaystyle\medskip\frac{\theta}{2}\cos(2\vartheta_m-2\phi)&\text{if}&q=2\\
\displaystyle\frac{\sin[(q-2)\theta+2\vartheta_m-q\phi]}{2(q-2)}&\text{if}&q\ne2,
\end{array}\right.
\end{multline*}
we have
\begin{align}
\begin{aligned}\label{termI2}
2&\sum_{q=1}^{\infty}i^qJ_q(kr)\int_{\theta_1}^{\theta_N}\cos^2(\theta-\vartheta_m)\cos(q(\theta-\phi))d\theta\\
=&-(\theta_N-\theta_1)\cos(2\vartheta_m-2\phi)J_2(x)+\sum_{q=1}^{\infty}\frac{2i^q}{q}\sin\left(\frac{s(\theta_N-\theta_1)}{2}\right)\cos\left(\frac{s(\theta_N+\theta_1-2\phi)}{2}\right)J_q(x)\\
&+\sum_{q=1}^{\infty}\frac{2i^q}{q+2}\sin\left(\frac{(q+2)(\theta_N-\theta_1)}{2}\right)\cos\left(\frac{(q+2)(\theta_N+\theta_1)-4\vartheta_m-2q\phi}{2}\right)J_q(x)\\
&+\sum_{q=1,q\ne2}^{\infty}\frac{2i^q}{q-2}\sin\left(\frac{(q-2)(\theta_N-\theta_1)}{2}\right)\cos\left(\frac{(q-2)(\theta_N+\theta_1)+4\vartheta_m-2q\phi}{2}\right)J_q(x).
\end{aligned}
\end{align}
Combining (\ref{termI1}) and (\ref{termI2}), we can obtain the \eqref{EquationBessel1}.
\end{proof}

\begin{theorem}\label{OSM_Single}
  Let $\vv_m=(\cos\vartheta_m,\sin\vartheta_m)$, $\vt_{m,n}=(\cos\theta_{m,n},\sin\theta_{m,n})$, $\vt=(\cos\theta,\sin\theta)$, and $\mr-\mr_s=|\mr-\mr_s|(\cos\phi_s,\sin\phi_s)$. Here, if $4k|\mr-\mb_{m,n}|\gg1$ and $4k|\mr-\ma_m|\gg1$ for all $n=1,2,\ldots,N$, then $\mathfrak{F}_{\osm}(\mr,m)$ can be expressed as follows:
\begin{multline}\label{Structure_Single}
\mathfrak{F}_{\osm}(\mr,m)=\left|\frac{Nk^2}{4AB}\sum_{s=1}^{S}\alpha_s^2\left(\frac{\mu_0}{\mu_s+\mu_0}\right)e^{ik\vv_m\cdot(\mr-\mr_s)}\right.\\
\left.\times\left[\left(\frac12-\frac{3\sqrt{3}}{16\pi}\right)J_0(k|\mr-\mr_s|)-\cos(2\vartheta_m-2\phi_s)J_2(k|\mr-\mr_s|)+\mathcal{E}_{\osm}(\mr,m)\right]\right|,
\end{multline}
where 
  \begin{align*}
  \mathcal{E}_{\osm}(\mr,m)&=\frac{3}{2\pi}\sum_{q=1}^{\infty}\frac{i^q}{q}\sin\frac{2q\pi}{3}\cos(q\vartheta_m-q\phi_s)J_q(k|\mr-\mr_s|)\\
&+\frac{3}{2\pi}\sum_{q=1}^{\infty}\frac{(-i)^q}{q+2}\sin\frac{2(q+2)\pi}{3}\cos(q\vartheta_m-q\phi_s)J_q(k|\mr-\mr_s|)\\
&+\frac{3}{2\pi}\sum_{q=1,q\ne2}^{\infty}\frac{(-i)^q}{q-2}\sin\frac{2(q-2)\pi}{3}\cos(q\vartheta_m-q\phi_s)J_q(k|\mr-\mr_s|).
  \end{align*}
\end{theorem}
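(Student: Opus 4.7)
My plan is to begin from the definition \eqref{ImagingFunction_Single} and replace each exact quantity by its large-$k$ asymptotic until the sum over $n$ can be evaluated by Lemma \ref{Lemma_Integration}. First I would substitute \eqref{AsymptoticFormula} for $u_{\scat}(\mb_{m,n},\ma_m)$, so that every appearance of the scattered field in $\mathfrak{F}_{\osm}$ becomes a bilinear product of gradients of Green's functions, and the sum over scatterers $s$ factors out of the sum over receivers $n$.

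Next, under the hypotheses $4k|\mr-\mb_{m,n}|\gg 1$ and $4k|\mr-\ma_m|\gg 1$, together with the fact that $A$ and $B$ are much larger than any point in the search region, I would use the large-argument asymptotic of $H_1^{(1)}$ together with the expansions $|\mr-\mb_{m,n}|\approx B - \vt_{m,n}\cdot\mr$ and $(\mr-\mb_{m,n})/|\mr-\mb_{m,n}|\approx -\vt_{m,n}$ to derive the plane-wave approximation
\[\nabla G(\mb_{m,n},\mr) \approx -\frac{k}{4}\sqrt{\frac{2}{\pi k B}}\, e^{-i\pi/4}\, e^{ikB}\, e^{-ik\vt_{m,n}\cdot\mr}\, \vt_{m,n},\]
and the analogous identity for $\nabla G(\ma_m,\mr)$. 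Taking dot products and conjugating, both $\nabla G(\mb_{m,n},\cdot)\cdot\nabla G(\ma_m,\cdot)$ factors reduce to an explicit constant in $k,A,B$ times $(\vt_{m,n}\cdot\vv_m)$ times a plane-wave phase in $\mr-\mr_s$. The emitter/receiver phases $e^{\pm ik(A+B)}$ cancel after multiplying $u_{\scat}(\mb_{m,n},\ma_m)$ against $\overline{\nabla G(\mb_{m,n},\mr)\cdot\nabla G(\ma_m,\mr)}$, and the emitter contribution collapses into the prefactor $e^{ik\vv_m\cdot(\mr-\mr_s)}$ already visible in \eqref{Structure_Single}, leaving the angular sum
\[\sum_{n=1}^N (\vt_{m,n}\cdot\vv_m)^2\, e^{ik\vt_{m,n}\cdot(\mr-\mr_s)}.\]

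The third step is to recognise this as a Riemann sum with step $\Delta\theta = 4\pi/(3(N-1))$ over $[\theta_1,\theta_N]$, so the sum equals approximately $3N/(4\pi)$ times
\[\int_{\theta_1}^{\theta_N}\cos^2(\theta-\vartheta_m)\,e^{ik|\mr-\mr_s|\cos(\theta-\phi_s)}\,d\theta.\]
I would then invoke Lemma \ref{Lemma_Integration} with $x=k|\mr-\mr_s|$ and $\phi=\phi_s$, specialised to the Fresnel arrangement $\theta_N-\theta_1=4\pi/3$ and $\theta_N+\theta_1=2\vartheta_m+2\pi$. The two constant-coefficient $J_0$ pieces combine via $\sin(4\pi/3)=-\sqrt{3}/2$ and $\cos(2\pi)=1$ into $(4\pi/3)\bigl(\tfrac12-\tfrac{3\sqrt{3}}{16\pi}\bigr)J_0(k|\mr-\mr_s|)$; the $J_2$ term reduces to $-(4\pi/3)\cos(2\vartheta_m-2\phi_s)J_2(k|\mr-\mr_s|)$; and for the three tail series the identity $\cos(q\vartheta_m+q\pi-q\phi_s)=(-1)^q\cos(q\vartheta_m-q\phi_s)$ collapses every angular argument to $\cos(q\vartheta_m-q\phi_s)$ and absorbs the $(-1)^q$ into the $i^q$ factor. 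Multiplying through by the Riemann-sum normaliser $3N/(4\pi)$ converts the overall $4\pi/3$ into $N$ and turns each $2i^q/(\cdot)$ into $3i^q/(2\pi(\cdot))$, matching the three series that constitute $\mathcal{E}_{\osm}(\mr,m)$.

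The main obstacle I anticipate is bookkeeping: tracking the $e^{-i\pi/4}$ coming from each Hankel asymptotic, the conjugation in the test vector, the $(-1)^q$ produced by $\theta_N+\theta_1-2\vartheta_m=2\pi$, and the $N$ generated by the Riemann-sum normalisation, so that the global constant collapses exactly to $Nk^2/(4AB)$. A secondary concern is the Riemann-sum error for finite $N$ when the integrand oscillates at frequency $k|\mr-\mr_s|$; within the intended regime this error is of the same order as the $o(\alpha_s^2)$ remainder already discarded in \eqref{AsymptoticFormula}, so it can be absorbed into the implicit $\approx$ of the stated identity without altering the closed-form structure \eqref{Structure_Single}.
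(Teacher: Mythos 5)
Your proposal follows the paper's proof essentially verbatim: substitute the asymptotic expansion \eqref{AsymptoticFormula}, replace the Green's-function gradients by their far-field plane-wave forms (your constant $-\tfrac{k}{4}\sqrt{2/(\pi kB)}e^{-i\pi/4}$ agrees with the paper's $\tfrac{ik(1+i)}{4\sqrt{kB\pi}}$), convert the receiver sum into a Riemann sum approximating the integral over $\mathbb{S}_m^1$, and evaluate it with Lemma \ref{Lemma_Integration} specialized to $\theta_N-\theta_1=4\pi/3$ and $\theta_N+\theta_1=2\vartheta_m+2\pi$. The only divergence is cosmetic: your uniform absorption of $(-1)^q$ into $i^q$ yields $(-i)^q$ in all three tail series, whereas the stated $\mathcal{E}_{\osm}$ keeps $i^q$ in the first one — an internal inconsistency of the paper rather than a gap in your argument.
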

\begin{proof}
Since $4k|\mr-\mb_{m,n}|\gg1$ for $n=1,2,\cdots,N$, the following asymptotic forms hold (refer to the literature \cite{CK} for additional information).
\begin{align}
\begin{aligned}\label{Asymptotic_Hankel}
H_0^{(1)}(k|\mb_{m,n}-\mr|)&\approx\frac{(1-i)e^{ik|\mb_{m,n}|}}{\sqrt{k|\mb_{m,n}|\pi}}e^{-ik\vt_{m,n}\cdot\mr}\\
\nabla H_0^{(1)}(k|\mb_{m,n}-\mr|)&\approx-\frac{k(1+i)e^{ik|\mb_{m,n}|}}{\sqrt{k|\mb_{m,n}|\pi}}\vt_{m,n} e^{-ik\vt_{m,n}\cdot\mr}.
\end{aligned}
\end{align}
Thus, we can examine
\begin{align}
\begin{aligned}\label{Representation_exp}
&u_{\scat}(\mb_{m,n},\ma_m)\approx\frac{ik e^{ik(A+B)}}{8\sqrt{AB}}\sum_{s=1}^{S}\alpha_s^2\pi\left(\frac{\mu_0}{\mu_s+\mu_0}\right)(\vt_{m,n}\cdot\vv_m)e^{-ik(\vt_{m,n}+\vv_m)\cdot\mr_s}\\
&\nabla G(\mb_{m,1},\mr)\cdot\nabla G(\ma_m,\mr)\approx\frac{-2ik e^{ik(A+B)}}{\pi\sqrt{AB}}(\vt_{m,n}\cdot\vv_m)e^{-ik(\vt_{m,n}+\vv_m)\cdot\mr}.
\end{aligned}
\end{align}
Therefore, based on \eqref{Representation_exp}, we obtain the following:
\begin{align*}
&\sum_{n=1}^{N}u_{\scat}(\mb_{m,n},\ma_m)\overline{\nabla G(\mb_{m,N},\mr)\cdot\nabla G(\ma_m,\mr)}\\
&\approx-\sum_{n=1}^{N}\frac{k^2}{4AB\pi}\sum_{s=1}^{S}\alpha_s^2\pi\left(\frac{\mu_0}{\mu_s+\mu_0}\right)(\vt_{m,n}\cdot\vv_m)^2e^{ik(\vt_{m,n}+\vv_m)\cdot(\mr-\mr_s)}\\
&=-\frac{k^2}{4AB\pi}\sum_{s=1}^{S}\alpha_s^2\pi\left(\frac{\mu_0}{\mu_s+\mu_0}\right)e^{ik\vv_m\cdot(\mr-\mr_s)}\left(\sum_{n=1}^{N}(\vt_{m,n}\cdot\vv_m)^2e^{ik\vt_{m,n}\cdot(\mr-\mr_s)}\right).
\end{align*}

Because $N$ is sufficiently large and $\vt_{m,n}\cdot(\mr-\mr_s)=|\mr-\mr_s|\cos(\theta_{m,n}-\phi_s)$, we can derive the following by letting $\triangle\theta=(\theta_{m,N}-\theta_{m,1})/N=4\pi/3N$:
\begin{align*}
&\sum_{n=1}^{N}(\vt_{m,n}\cdot\vv_m)^2e^{ik\vt_{m,n}\cdot(\mr-\mr_s)}=\frac{N}{N\triangle\theta}\sum_{n=1}^{N}(\vt_{m,n}\cdot\vv_m)^2e^{ik\vt_{m,n}\cdot(\mr-\mr_s)}\triangle\theta\\
&\approx\frac{3N}{4\pi}\int_{\mathbb{S}_m^1}(\vt\cdot\vv_m)^2e^{ik\vt\cdot(\mr-\mr_s)}\rd\vt=\frac{3N}{4\pi}\int_{\vartheta_m+\pi/3}^{\vartheta_m+5\pi/3}\cos^2(\theta-\vartheta_m)e^{ik|\mr-\mr_s|\cos(\theta-\phi_s)}\rd\theta\\
&=N\left[\bigg(\frac{1}{2}-\frac{3\sqrt{3}}{16\pi}\bigg)J_0(k|\mr-\mr_s|)-\cos(2\vartheta_m-2\phi_s)J_2(k|\mr-\mr_s|)\right]\\
&+\frac{3N}{2\pi}\sum_{q=1}^{\infty}\frac{i^q}{q}\sin\frac{2q\pi}{3}\cos(q\vartheta_m-q\phi_s)J_q(k|\mr-\mr_s|)\\
&+\frac{3N}{2\pi}\left[\sum_{q=1}^{\infty}\frac{(-i)^q}{q+2}\sin\frac{2(q+2)\pi}{3}+\sum_{q=1,q\ne2}^{\infty}\frac{(-i)^q}{q-2}\sin\frac{2(q-2)\pi}{3}\right]\cos(q\vartheta_m-q\phi_s)J_q(k|\mr-\mr_s|).
\end{align*}
Hence, the structure \eqref{Structure_Single} can be obtained.
\end{proof}

Then, based on Theorem \ref{OSM_Single}, we can examine some properties of the OSM.

\begin{remark}[Dependence on the emitter location]\label{Remark_Single1}
Since $J_0(0)=1$ and $J_q(0)=0$ for nonzero integer $q$, the map of $\mathfrak{F}_{\osm}(\mr,m)$ will contain peaks of magnitudes
\[\frac{N\alpha_s^2k^2}{4AB}\left(\frac{\mu_0}{\mu_s+\mu_0}\right)\left(\frac12-\frac{3\sqrt{3}}{16\pi}\right)\]
at the location $\mr=\mr_s\in D_s$. Thus, it is likely possible to identify both the existence and location of $D_s$. Unfortunately, two large-magnitude peaks are also appear in the neighborhood of $\mr_s$. To visualize this phenomenon, we consider the following quantity
\[\mathcal{D}_{\osm}^{(1)}(x)=\left(\frac12-\frac{3\sqrt{3}}{16\pi}\right)J_0(k|x|)-J_2(k|x|).\]
This is similar to the value of $\mathfrak{F}_{\osm}(\mr,m)$ in the presence of single object $D_1$ located at the origin $\mr_1=(0,0)$ and $\vartheta_m$ is parallel to $\mr-\mr_1$. Based on the plot of $|\mathcal{D}_{\osm}^{(1)}(x)|$ in Figure \ref{PlotOSM1} at $f=2,6,\SI{10}{\giga\hertz}$, we can examine that the value $|\mathcal{D}_{\osm}^{(1)}(x)|$ has its local maximum at $x=0$ but its global maximum value is appear at $x=\pm0.3054/k$. Therefore, we can say that the location of $D_s$ can be recognized through the local maximum value of $\mathfrak{F}_{\osm}(\mr,m)$ and two peaks of large magnitude are also appeared at $\mr$ satisfying
\[k|\mr-\mr_s|=0.3054\quad\text{and}\quad\frac{\mr-\mr_s}{|\mr-\mr_s|}=\pm\vv_m.\]
Therefore, based on the above observations, the imaging performance of $\mathfrak{F}_{\osm}(\mr,m)$ will be strongly dependent on the position of the emitter $\mA_m$.
\end{remark}

\begin{figure}[h]
\begin{center}
\begin{tikzpicture}
\scriptsize
\begin{axis}
[legend style={at={(1,1)}},
width=\textwidth,
height=0.4\textwidth,
xmin=-0.1,
xmax=0.1,
ymin=0,
ymax=0.625,
legend cell align={left}]
\addplot[line width=1pt,solid,color=green!60!black] %
	table[x=x,y=y1,col sep=comma]{PlotOSM1.csv};
\addlegendentry{\scriptsize$f=\SI{2}{\giga\hertz}$};
\addplot[line width=1pt,solid,color=red] %
	table[x=x,y=y2,col sep=comma]{PlotOSM1.csv};
\addlegendentry{\scriptsize$f=\SI{6}{\giga\hertz}$};
\addplot[line width=1pt,solid,color=cyan!80!black] %
	table[x=x,y=y3,col sep=comma]{PlotOSM1.csv};
\addlegendentry{\scriptsize$f=\SI{10}{\giga\hertz}$};
\end{axis}
\end{tikzpicture}
\caption{\label{PlotOSM1}Plots of $|\mathcal{D}_{\osm}^{(1)}(x)|$ for $-0.1\leq x\leq0.1$ at $f=2,6,\SI{10}{\giga\hertz}$.}
\end{center}
\end{figure}

\begin{remark}[Structure of the factor $\mathcal{E}_{\osm}(\mr,m)$]\label{Remark_Single2}
Notice that since
\[\cos(\vartheta_m-\phi_s)=\frac{\mr-\mr_s}{|\mr-\mr_s|}\cdot\vv_m\quad\text{and}\quad\cos qx=\sum_{p=0}^{q}a_p^{(q)}\cos^px,\]
where coefficients $a_p^{(q)}$ satisfy the following relationships
\begin{align*}
\cos 2x &= 2 \cos^2 x - 1\\
\cos 3x &= 4 \cos^3 x - 3 \cos x\\
\cos 4x &= 8 \cos^4 x - 8 \cos^2 x + 1\\
\cos 5x &= 16\cos^5 x - 20 \cos^3 x + 5 \cos x\\
\cos 6x &= 32\cos^6 x - 48 \cos^4 x + 18\cos^2 x - 1\\
\cos 7x &= 64\cos^7 x - 112 \cos^5 x + 56\cos^3 x - 7 \cos x\\
&\vdots
\end{align*}
The structure of $\mathcal{E}_{\osm}(\mr,m)$ is as follows
\begin{align*}
\mathcal{E}_{\osm}(\mr,m)&=\frac{3}{2\pi}\sum_{q=1}^{\infty}\frac{i^q}{q}\sin\frac{2q\pi}{3}a_p^{(q)}\left(\frac{\mr-\mr_s}{|\mr-\mr_s|}\cdot\vv_m\right)^qJ_q(k|\mr-\mr_s|)\\
&+\frac{3}{2\pi}\sum_{q=1}^{\infty}\frac{(-i)^q}{q+2}\sin\frac{2(q+2)\pi}{3}a_p^{(q)}\left(\frac{\mr-\mr_s}{|\mr-\mr_s|}\cdot\vv_m\right)^qJ_q(k|\mr-\mr_s|)\\
&+\frac{3}{2\pi}\sum_{q=1,q\ne2}^{\infty}\frac{(-i)^q}{q-2}\sin\frac{2(q-2)\pi}{3}a_p^{(q)}\left(\frac{\mr-\mr_s}{|\mr-\mr_s|}\cdot\vv_m\right)^qJ_q(k|\mr-\mr_s|).
\end{align*}
Hence, we can say that the factor $\mathcal{E}_{\osm}(\mr,m)$ does not contribute to identify the objects and contribute to generate two peaks of large magnitude in the neighborhood of the $\mr_s$, $s=1,2,\ldots,S$.
\end{remark}

\begin{remark}[Influence of the factor $\mathcal{E}_{\osm}(\mr,m)$]\label{Remark_Single3}
The factor $\mathcal{E}_{\osm}(\mr,m)$ will disturb the identification process by generating several artifacts due to the oscillating property of $J_q$. To examine the influence of $\mathcal{E}_{\osm}(\mr,m)$, we consider the following quantity:
\[\mathcal{D}_{\osm}^{(2)}(x)=\frac{3}{2\pi}\sum_{q=1}^{10^5}\left(\frac{i^q}{q}\sin\frac{2q\pi}{3}+\frac{(-i)^q}{q+2}\sin\frac{2(q+2)\pi}{3}\right)J_q(k|x|)+\frac{3}{2\pi}\sum_{q=1,q\ne2}^{10^5}\frac{(-i)^q}{q-2}\sin\frac{2(q-2)\pi}{3}J_q(k|x|).\]
This is similar to the value of $\mathcal{E}_{\osm}(\mr,m)$ in the presence of single object $D_1$ located at the origin $\mr_1=(0,0)$ and $\vartheta_m$ is parallel to $\mr-\mr_1$ and $\vartheta_m=\phi_1$. Based on  numerical computation, $|\mathcal{D}_{\osm}^{(2)}(x)|\leq0.1744$ (see Figure \ref{PlotOSM2}), the value of $|\mathcal{E}_{\osm}(\mr,m)|$ will not exceed $0.18$. Therefore, the factor $\mathcal{E}_{\osm}(\mr,m)$ will contribute to the generation of several artifacts that disturb the recognition of the existence of objects.
\end{remark}

\begin{figure}[h]
\begin{center}
\begin{tikzpicture}
\scriptsize
\begin{axis}
[legend style={at={(1,1)}},
width=\textwidth,
height=0.4\textwidth,
ytick distance=0.18,
xmin=-0.1,
xmax=0.1,
ymin=0,
ymax=0.18,
legend cell align={left}]
\addplot[line width=1pt,solid,color=green!60!black] %
	table[x=x,y=y1,col sep=comma]{PlotOSM2.csv};
\addlegendentry{\scriptsize$f=\SI{2}{\giga\hertz}$};
\addplot[line width=1pt,solid,color=red] %
	table[x=x,y=y2,col sep=comma]{PlotOSM2.csv};
\addlegendentry{\scriptsize$f=\SI{6}{\giga\hertz}$};
\addplot[line width=1pt,solid,color=cyan!80!black] %
	table[x=x,y=y3,col sep=comma]{PlotOSM2.csv};
\addlegendentry{\scriptsize$f=\SI{10}{\giga\hertz}$};
\end{axis}
\end{tikzpicture}
\caption{\label{PlotOSM2}Plots of $|\mathcal{D}_{\osm}^{(2)}(x)|$ for $-0.1\leq x\leq0.1$ at $f=2,6,\SI{10}{\giga\hertz}$.}
\end{center}
\end{figure}

\begin{remark}[Limitation of object detection]\label{Remark_Single4}
Based on the Remark \ref{Remark_Single1}, the imaging performance is significantly dependent on the emitter location. Moreover, it will be very difficult to identify the object through the map of $\mathfrak{F}_{\osm}(\mr,m)$. To this end, we consider the following quantity:
\[\mathcal{D}_{\osm}(x)=\mathcal{D}_{\osm}^{(1)}(x)+\mathcal{D}_{\osm}^{(2)}(x),\]
where $\mathcal{D}_{\osm}^{(1)}(x)$ and $\mathcal{D}_{\osm}^{(2)}(x)$ are given in Remarks \ref{Remark_Single1} and \ref{Remark_Single3}, respectively. Based on numerical computation, it is impossible to say that the object can be identified uniquely through the OSM with single source because the value of $\mathcal{D}_{\osm}(x)$ does not reach its global maximum value at $x=0$ and several artifacts with large magnitude will disturb the object detection in the imaging results, refer to Figure \ref{PlotOSM}.
\end{remark}

\begin{figure}[h]
\begin{center}
\begin{tikzpicture}
\scriptsize
\begin{axis}
[legend style={at={(1,1)}},
width=\textwidth,
height=0.4\textwidth,
xmin=-0.1,
xmax=0.1,
ymin=0.12,
ymax=0.61,
legend cell align={left}]
\addplot[line width=1pt,solid,color=green!60!black] %
	table[x=x,y=y1,col sep=comma]{PlotOSM.csv};
\addlegendentry{\scriptsize$f=\SI{2}{\giga\hertz}$};
\addplot[line width=1pt,solid,color=red] %
	table[x=x,y=y2,col sep=comma]{PlotOSM.csv};
\addlegendentry{\scriptsize$f=\SI{6}{\giga\hertz}$};
\addplot[line width=1pt,solid,color=cyan!80!black] %
	table[x=x,y=y3,col sep=comma]{PlotOSM.csv};
\addlegendentry{\scriptsize$f=\SI{10}{\giga\hertz}$};
\end{axis}
\end{tikzpicture}
\caption{\label{PlotOSM}Plots of $|\mathcal{D}_{\osm}(x)|$ for $-0.1\leq x\leq0.1$ at $f=2,6,\SI{10}{\giga\hertz}$.}
\end{center}
\end{figure}

\section{Simulation results with single source using synthetic and experimental dataset}\label{sec:4}
In the following, we present numerical simulation results using synthetic and Fresnel experimental dataset to support Theorem \ref{OSM_Single}. As discussed in Section \ref{sec:2}, the emitters and receivers were placed on circles centered at the origin with radii $A=\SI{0.72}{\meter}$ and $B=\SI{0.76}{\meter}$, respectively, and the imaging region $\Omega$ was selected as a square $(-\SI{0.1}{\meter},\SI{0.1}{\meter})\times(-\SI{0.1}{\meter},\SI{0.1}{\meter})$ to satisfy the relation $4k|\mr-\ma_m|$, $4k|\mr-\mb_{m,n}|\gg1$ for $m=1,2,\ldots,M(=36)$ and $n=1,2,\ldots,N(=49)$. In addition, the range of the receivers was restricted from $\SI{60}{\degree}$ to $\SI{300}{\degree}$ with a step size of $\triangle\theta=\SI{5}{\degree}$ based on the location of each emitter. The simulation setup is illustrated in Figure \ref{Configuration_Fresnel}.

\subsection{Simulation results using synthetic dataset}
For the numerical simulation using synthetic dataset, we chosen three small circles $D_s$, $s=1,2,3$, with radius $\alpha_s$, permittivity $\eps_0$, permeability $\mu_s$, and locations $\mr_1 = (\SI{0.07}{\meter},\SI{0.05}{\meter})$, $\mr_2 = (-\SI{0.07}{\meter},\SI{0.00}{\meter})$, and $\mr_3 = (\SI{0.04}{\meter},-\SI{0.06}{\meter})$. Values of permeabilities $\mu_m$ and radii $\alpha_m$ are listed in Table \ref{Table}. With this configuration, the measurement data $u_{\scat}(\mb_{m,n},\ma_m)$ at $f=4,8,\SI{12}{\giga\hertz}$ was generated by the Foldy-Lax framework \cite{HSZ4}. After the generation of the dataset, $\SI{20}{\dB}$ white Gaussian random noise was added to the unperturbed data.

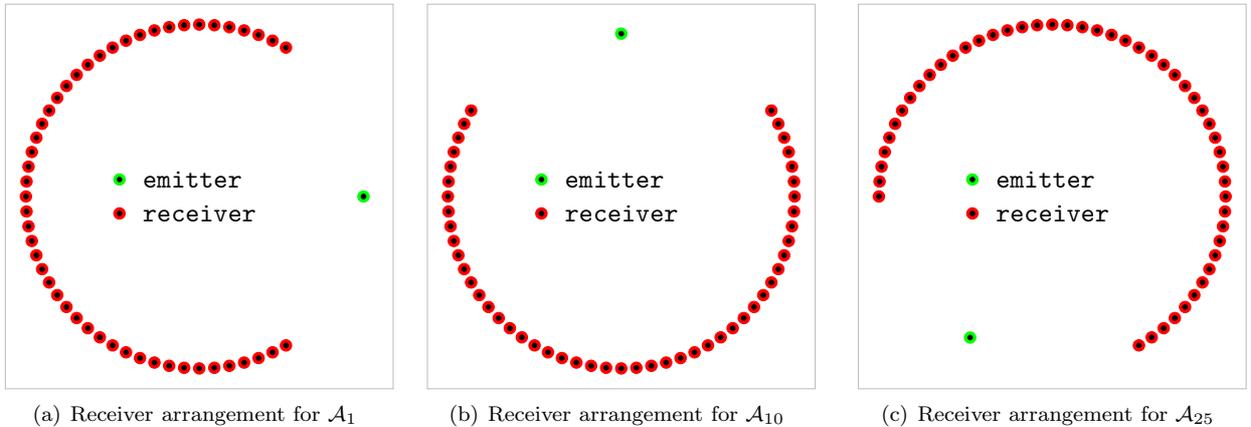
\begin{figure}[h]
\begin{center}
\subfigure[Receiver arrangement for $\mA_1$]{
\begin{tikzpicture}[scale=1.5]
\def\RT{0.72*2};
\def\RR{0.76*2};
\def\Edge{0.15*2};
\def\BD{0.85*2};
\draw[green,fill=green] ({\RT*cos(0)},{\RT*sin(0)}) circle (0.05cm);
\draw[black,fill=black] ({\RT*cos(0)},{\RT*sin(0)}) circle (0.02cm);
\foreach \beta in {60,65,...,300}
{\draw[red,fill=red] ({\RR*cos(\beta)},{\RR*sin(\beta)}) circle (0.05cm);
\draw[black,fill=black] ({\RR*cos(\beta)},{\RR*sin(\beta)}) circle (0.02cm);}

\draw[green,fill=green] (-0.7,0.15) circle (0.05cm);
\draw[black,fill=black] (-0.7,0.15) circle (0.02cm);
\node[right] at (-0.7,0.15) {\texttt{~emitter}};
\draw[red,fill=red] (-0.7,-0.15) circle (0.05cm);
\draw[black,fill=black] (-0.7,-0.15) circle (0.02cm);
\node[right] at (-0.7,-0.15) {\texttt{~receiver}};

\draw[gray!50!white] (\BD,\BD) -- (\BD,-\BD) -- (-\BD,-\BD) -- (-\BD,\BD) -- cycle;
\end{tikzpicture}}\hfill
\subfigure[Receiver arrangement for $\mA_{10}$]{
\begin{tikzpicture}[scale=1.5]

\def\RT{0.72*2};
\def\RR{0.76*2};
\def\Edge{0.15*2};
\def\BD{0.85*2};

\draw[green,fill=green] ({\RT*cos(90)},{\RT*sin(90)}) circle (0.05cm);
\draw[black,fill=black] ({\RT*cos(90)},{\RT*sin(90)}) circle (0.02cm);

\foreach \beta in {150,155,...,390}
{\draw[red,fill=red] ({\RR*cos(\beta)},{\RR*sin(\beta)}) circle (0.05cm);
\draw[black,fill=black] ({\RR*cos(\beta)},{\RR*sin(\beta)}) circle (0.02cm);}

\draw[green,fill=green] (-0.7,0.15) circle (0.05cm);
\draw[black,fill=black] (-0.7,0.15) circle (0.02cm);
\node[right] at (-0.7,0.15) {\texttt{~emitter}};
\draw[red,fill=red] (-0.7,-0.15) circle (0.05cm);
\draw[black,fill=black] (-0.7,-0.15) circle (0.02cm);
\node[right] at (-0.7,-0.15) {\texttt{~receiver}};

\draw[gray!50!white] (\BD,\BD) -- (\BD,-\BD) -- (-\BD,-\BD) -- (-\BD,\BD) -- cycle;
\end{tikzpicture}}
\hfill
\subfigure[Receiver arrangement for $\mA_{25}$]{
\begin{tikzpicture}[scale=1.5]

\def\RT{0.72*2};
\def\RR{0.76*2};
\def\Edge{0.15*2};
\def\BD{0.85*2};

\draw[green,fill=green] ({\RT*cos(240)},{\RT*sin(240)}) circle (0.05cm);
\draw[black,fill=black] ({\RT*cos(240)},{\RT*sin(240)}) circle (0.02cm);

\foreach \beta in {300,305,...,540}
{\draw[red,fill=red] ({\RR*cos(\beta)},{\RR*sin(\beta)}) circle (0.05cm);
\draw[black,fill=black] ({\RR*cos(\beta)},{\RR*sin(\beta)}) circle (0.02cm);}

\draw[green,fill=green] (-0.7,0.15) circle (0.05cm);
\draw[black,fill=black] (-0.7,0.15) circle (0.02cm);
\node[right] at (-0.7,0.15) {\texttt{~emitter}};
\draw[red,fill=red] (-0.7,-0.15) circle (0.05cm);
\draw[black,fill=black] (-0.7,-0.15) circle (0.02cm);
\node[right] at (-0.7,-0.15) {\texttt{~receiver}};

\draw[gray!50!white] (\BD,\BD) -- (\BD,-\BD) -- (-\BD,-\BD) -- (-\BD,\BD) -- cycle;
\end{tikzpicture}}
\caption{\label{SimulationSetting}Antenna arrangements with emitters $\mA_1$, $\mA_{10}$, and $\mA_{25}$.}
\end{center}
\end{figure}

\begin{table}[h]
\begin{center}
\begin{tabular}{llllllll} \hline
Cases&$\mu_1$&$\mu_2$&$\mu_3$&$\alpha_1$&$\alpha_2$&$\alpha_3$&description\\\hline
Case $1$&$5\mu_0$&$5\mu_0$&$5\mu_0$&$\SI{0.010}{\meter}$&$\SI{0.010}{\meter}$&$\SI{0.010}{\meter}$&same size and permittivity\\
Case $2$&$3\mu_0$&$5\mu_0$&$7\mu_0$&$\SI{0.010}{\meter}$&$\SI{0.010}{\meter}$&$\SI{0.010}{\meter}$&same size but different permeability\\
Case $3$&$5\mu_0$&$5\mu_0$&$5\mu_0$&$\SI{0.015}{\meter}$&$\SI{0.010}{\meter}$&$\SI{0.005}{\meter}$&same permeability but different size\\
Case $4$&$3\mu_0$&$5\mu_0$&$7\mu_0$&$\SI{0.015}{\meter}$&$\SI{0.010}{\meter}$&$\SI{0.005}{\meter}$&different size and permeability\\\hline
\end{tabular}
\caption{\label{Table}Permeabilities and sizes of objects in synthetic data experiment.}
\end{center}
\end{table}

\begin{example}\label{Ex_Single_Case1}
Figure \ref{Synthetic_Single_Case1} shows maps of $\mathfrak{F}_{\osm}(\mr,1)$ with $\vv_1=\SI{0.72}{\meter}(\cos\SI{0}{\degree},\sin\SI{0}{\degree})$ in Case $1$. Based on the imaging result, we can observe that although the existence of $D_s$, $s=1,2,3$, can be identified, their exact locations and outline shapes cannot be recognized at $f=\SI{4}{\giga\hertz}$. Notice that based on the imaging result at $f=\SI{8}{\giga\hertz}$, it is very difficult to recognize the existence of $D_2$ and $D_3$ because several artifacts with large magnitudes are also included. Furthermore, it is very difficult to recognize the existence, location, and outline shape of objects due to the excessive number of artifacts at $f=\SI{12}{\giga\hertz}$.
\end{example}

\begin{figure}[h]
\begin{center}
\subfigure[$f=\SI{4}{\giga\hertz}$]{\includegraphics[width=.33\columnwidth]{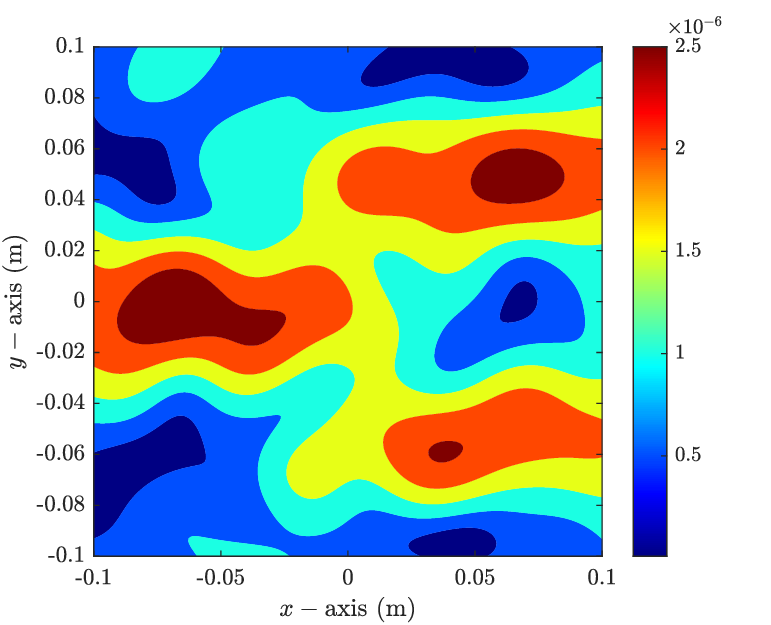}}\hfill
\subfigure[$f=\SI{8}{\giga\hertz}$]{\includegraphics[width=.33\columnwidth]{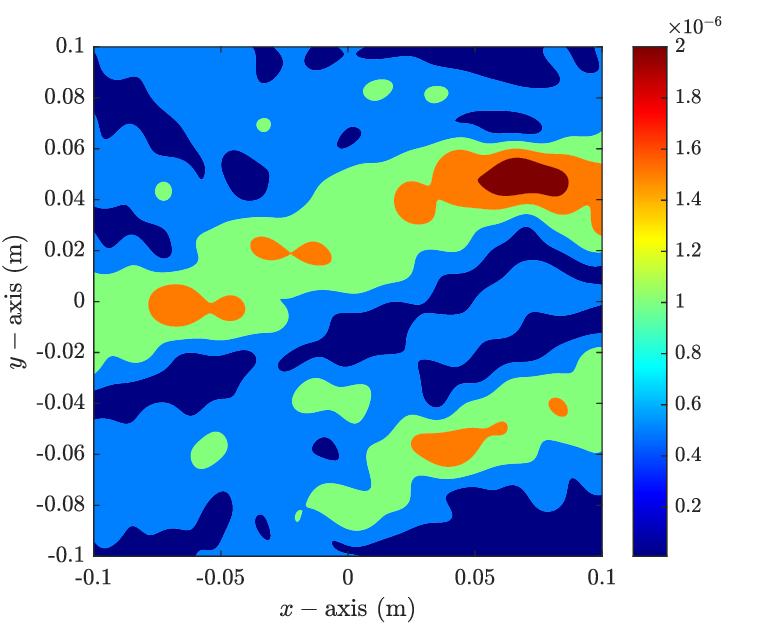}}\hfill
\subfigure[$f=\SI{12}{\giga\hertz}$]{\includegraphics[width=.33\columnwidth]{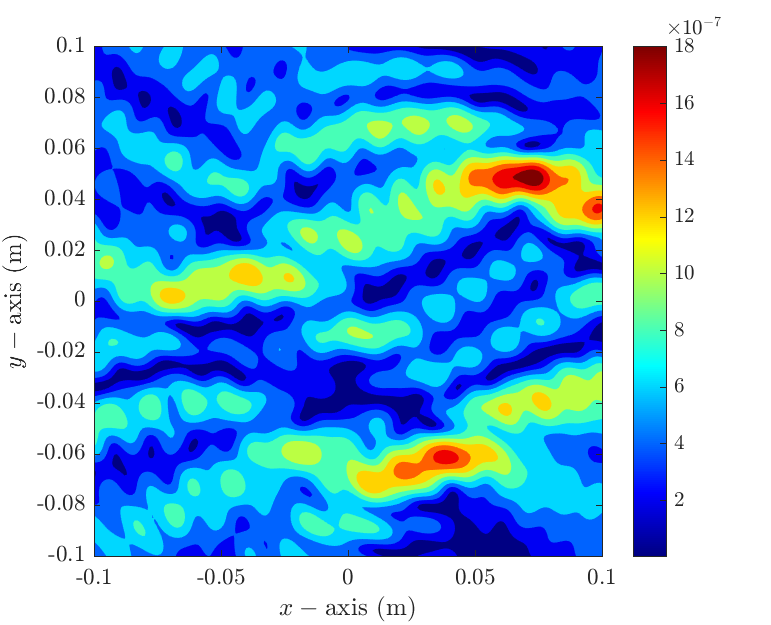}}
\caption{\label{Synthetic_Single_Case1}(Example \ref{Ex_Single_Case1}) Maps of $\mathfrak{F}_{\osm}(\mr,1)$ in Case $1$.}
\end{center}
\end{figure}

\begin{example}\label{Ex_Single_Case2}
Figure \ref{Synthetic_Single_Case2} shows maps of $\mathfrak{F}_{\osm}(\mr,10)$ with $\vv_{10}=\SI{0.72}{\meter}(\cos\SI{90}{\degree},\sin\SI{90}{\degree})$ in Case $2$. Since $\mu_1<\mu_2<\mu_3$ and $\alpha_s$ are same,
\[\frac{\mu_0}{\mu_1+\mu_0}>\frac{\mu_0}{\mu_2+\mu_0}>\frac{\mu_0}{\mu_3+\mu_0}\quad\text{implies}\quad\mathfrak{F}_{\osm}(\mr_1,10)>\mathfrak{F}_{\osm}(\mr_2,10)>\mathfrak{F}_{\osm}(\mr_3,10).\]
Hence, the value of $\mathfrak{F}_{\osm}(\mr,10)$ reached its maximum value at $\mr=\mr_1\in D_1$ so that the existence of $D_1$ can be recognized clearly. Fortunately, the existence of $D_2$ can be recognized also but since the value of $\mathfrak{F}_{\osm}(\mr,10)$ in the neighborhood of $D_3$ is small and difficult to distinguish from several artifacts in the map, it is challenging to recognize the existence of $D_3$. Furthermore, it can be observed that the shape of peaks with a large magnitude differs from that in Figure \ref{Synthetic_Single_Case1}. Roughly speaking, it can be seen that it spreads parallel to the direction vector $\vv_{10}$, which corresponds to the position of the source.
\end{example}
 
\begin{figure}[h]
\begin{center}
\subfigure[$f=\SI{4}{\giga\hertz}$]{\includegraphics[width=.33\columnwidth]{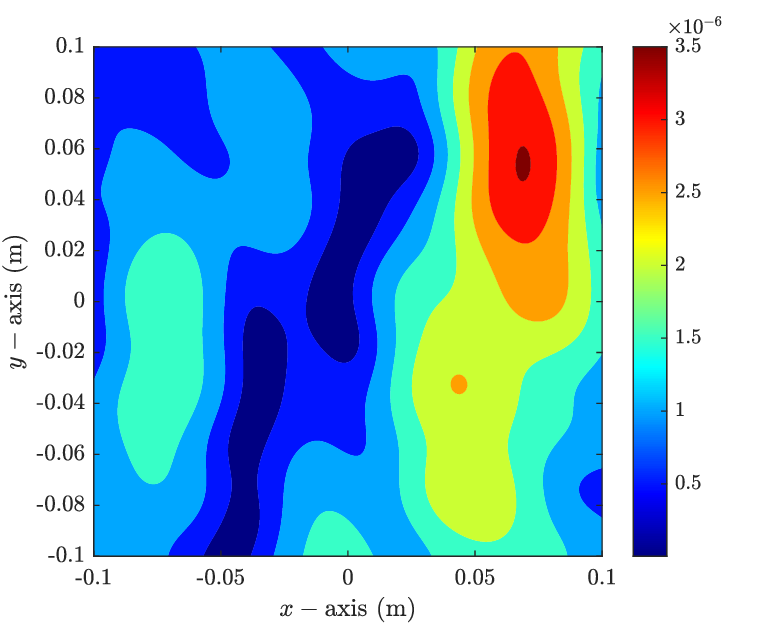}}\hfill
\subfigure[$f=\SI{8}{\giga\hertz}$]{\includegraphics[width=.33\columnwidth]{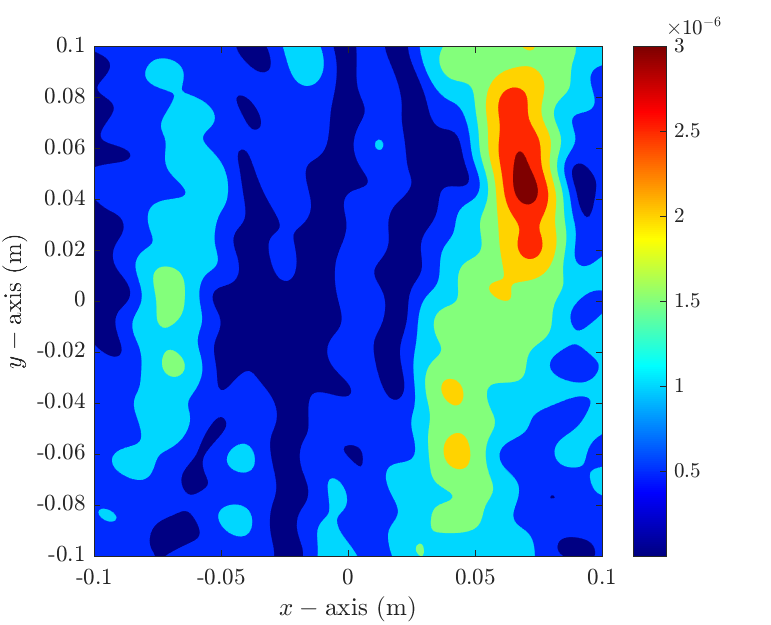}}\hfill
\subfigure[$f=\SI{12}{\giga\hertz}$]{\includegraphics[width=.33\columnwidth]{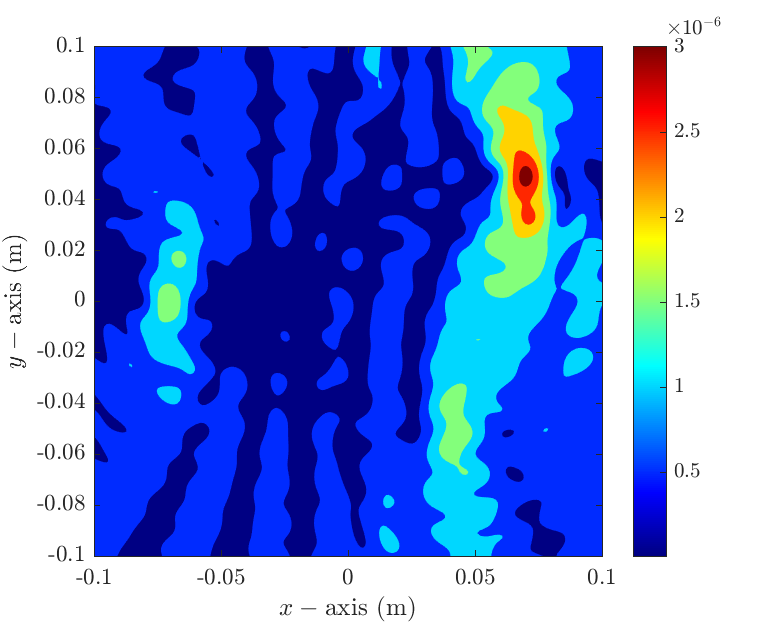}}
\caption{\label{Synthetic_Single_Case2}(Example \ref{Ex_Single_Case2}) Maps of $\mathfrak{F}_{\osm}(\mr,10)$ in Case $2$.}
\end{center}
\end{figure}

\begin{example}\label{Ex_Single_Case3}
Figure \ref{Synthetic_Single_Case3} shows maps of $\mathfrak{F}_{\osm}(\mr,25)$ with $\vv_{25}=\SI{0.72}{\meter}(\cos\SI{240}{\degree},\sin\SI{240}{\degree})$ in Case $3$. Since $\alpha_1>\alpha_2>\alpha_3$ and $\mu_s$ are same, $\mathfrak{F}_{\osm}(\mr_1,25)>\mathfrak{F}_{\osm}(\mr_2,25)>\mathfrak{F}_{\osm}(\mr_3,25)$ so that similar to the Example \ref{Ex_Single_Case2}, the existence of $D_1$ and $D_2$ can be recognized. However, since the value of $\mathfrak{F}_{\osm}(\mr,25)$ in the neighborhood of $D_3$ is very small, it is impossible to recognize the existence of $D_3$. Same as in Example \ref{Ex_Single_Case2}, the shape of peaks with a large magnitude at $\mr_1$ and $\mr_2$ spreads parallel to the direction vector $\vv_{25}$, which corresponds to the position of the source.
\end{example}

\begin{figure}[h]
\begin{center}
\subfigure[$f=\SI{4}{\giga\hertz}$]{\includegraphics[width=.33\columnwidth]{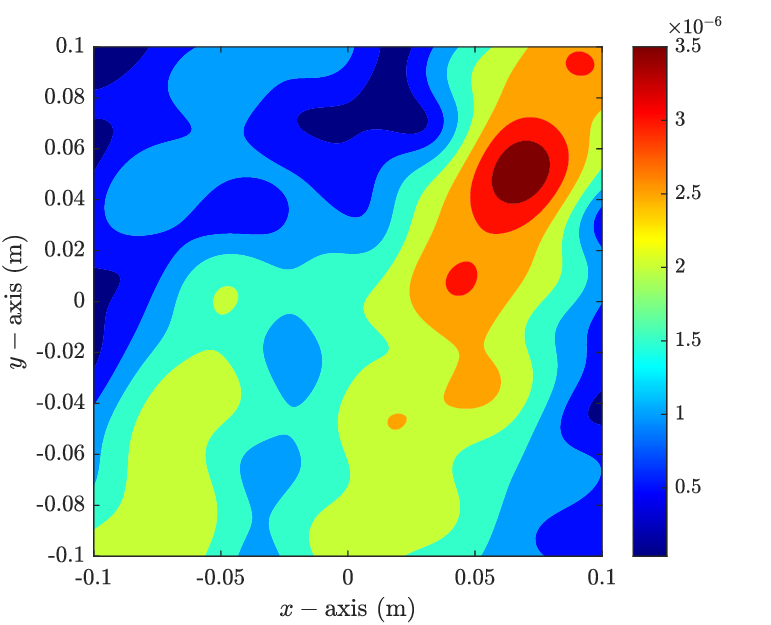}}\hfill
\subfigure[$f=\SI{8}{\giga\hertz}$]{\includegraphics[width=.33\columnwidth]{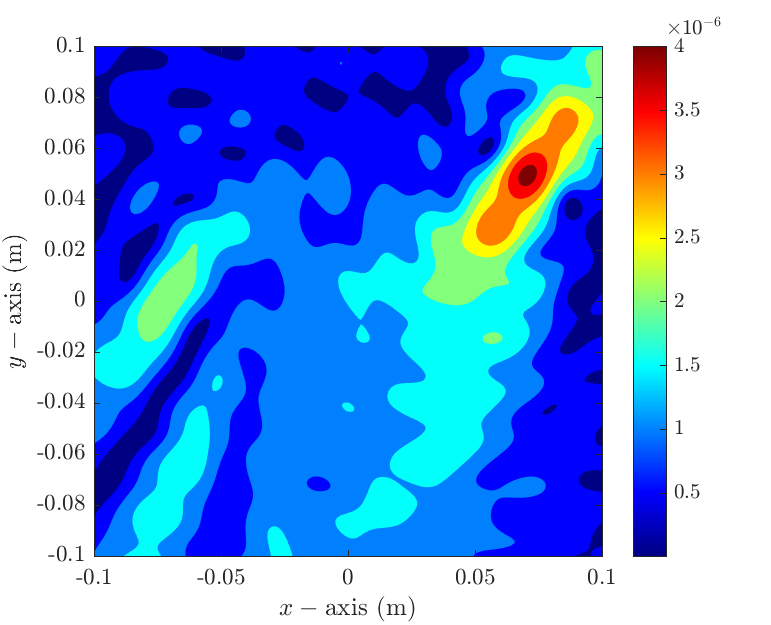}}\hfill
\subfigure[$f=\SI{12}{\giga\hertz}$]{\includegraphics[width=.33\columnwidth]{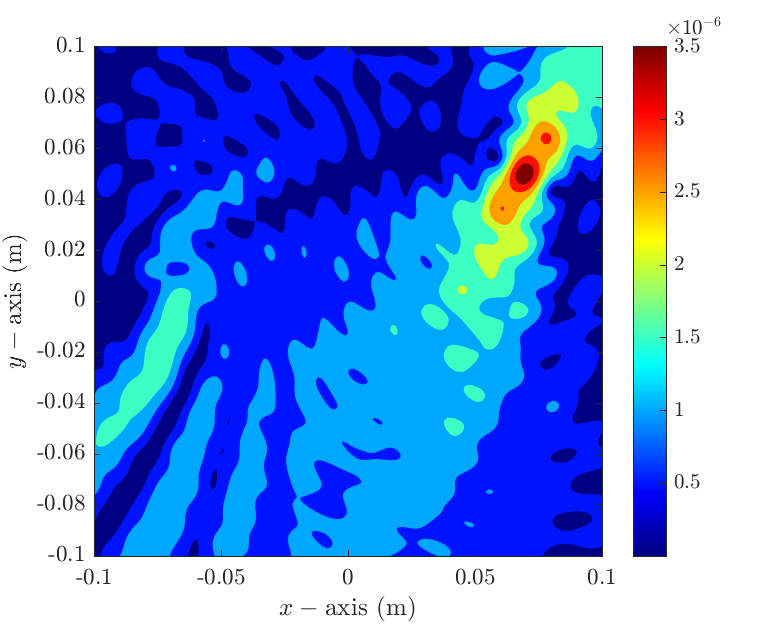}}
\caption{\label{Synthetic_Single_Case3}(Example \ref{Ex_Single_Case3}) Maps of $\mathfrak{F}_{\osm}(\mr,25)$ in Case $3$.}
\end{center}
\end{figure}

\begin{example}\label{Ex_Single_Case4}
Figure \ref{Synthetic_Single_Case4} shows maps of $\mathfrak{F}_{\osm}(\mr,1)$ in Case $4$. Notice that since
\[\alpha_1^2\left(\frac{\mu_0}{\mu_1+\mu_0}\right)=0.0056>\alpha_2^2\left(\frac{\mu_0}{\mu_2+\mu_0}\right)=0.0017\gg\alpha_3^2\left(\frac{\mu_0}{\mu_3+\mu_0}\right)=3.125\times10^{-4},\]
$\mathfrak{F}(\mr,1)$ in the neighborhood of $D_1$ is considerably larger than $\mathfrak{F}(\mr,1)$ in the neighborhoods of $D_2$ and $D_3$. Correspondingly, at $f=\SI{4}{\giga\hertz}$ the $D_1$ is recognizable but $D_2$ is barely recognizable. However, at $f=\SI{8}{\giga\hertz}$ and $f=\SI{12}{\giga\hertz}$, $D_2$ cannot be recognized. At any frequency of operation, $D_3$ cannot be recognized.
\end{example}

\begin{figure}[h]
\begin{center}
\subfigure[$f=\SI{4}{\giga\hertz}$]{\includegraphics[width=.33\columnwidth]{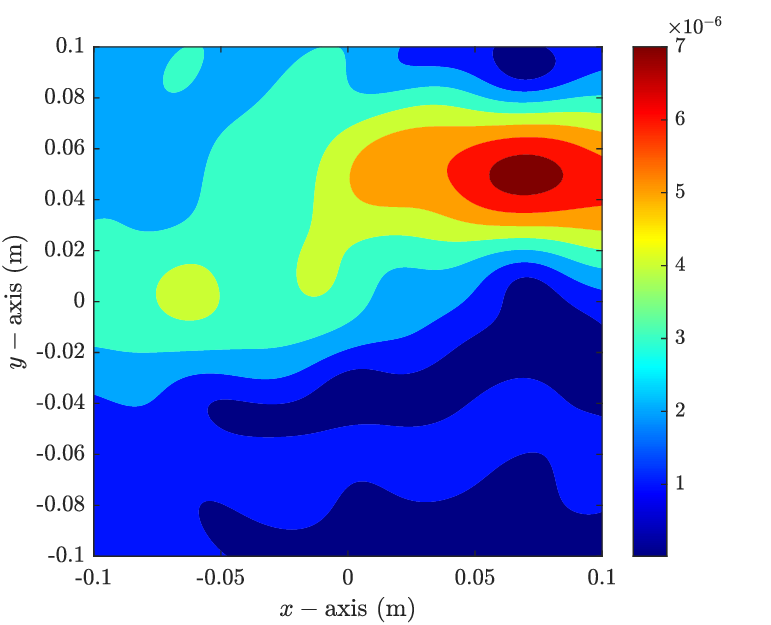}}\hfill
\subfigure[$f=\SI{8}{\giga\hertz}$]{\includegraphics[width=.33\columnwidth]{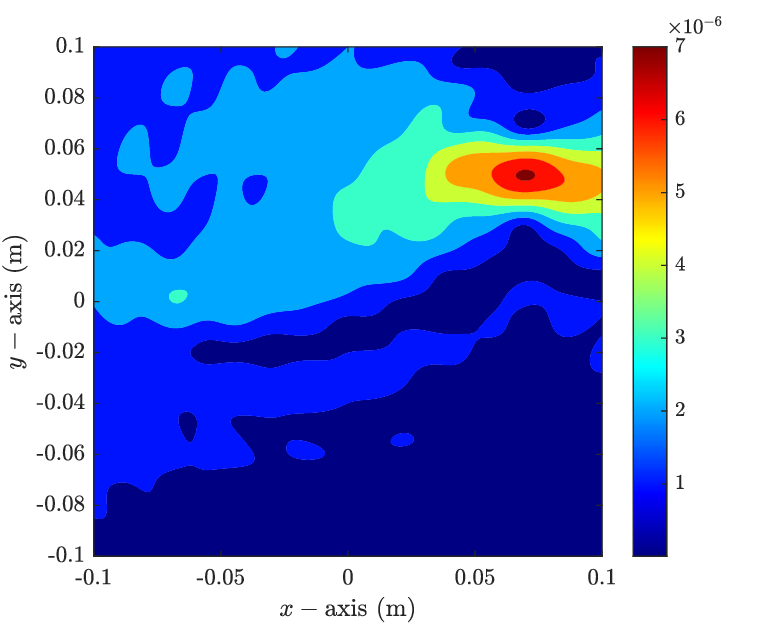}}\hfill
\subfigure[$f=\SI{12}{\giga\hertz}$]{\includegraphics[width=.33\columnwidth]{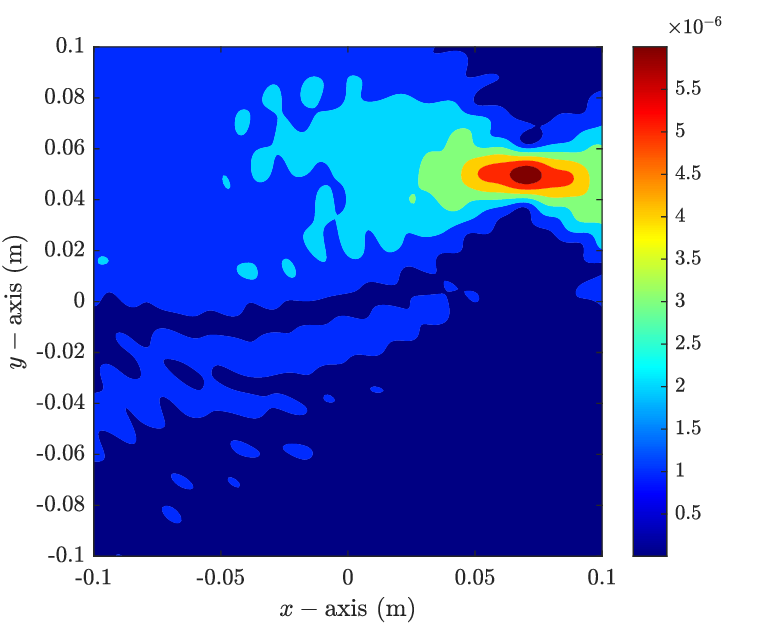}}
\caption{\label{Synthetic_Single_Case4}(Example \ref{Ex_Single_Case4}) Maps of $\mathfrak{F}_{\osm}(\mr,1)$ in Case $4$.}
\end{center}
\end{figure}

\subsection{Simulation results using Fresnel experimental dataset}
Here, we exhibit simulation results using 2D Fresnel experimental dataset \cite{BS}. The simulation setup is same as the synthetic dataset experiment while the object $D$ is a metallic target with a rectangular cross-section centered at the origin.

\begin{example}\label{Ex_Fresnel_Single1}
Figure \ref{Fresnel_Single1} shows the maps of $\mathfrak{F}_{\osm}(\mr,1)$ at frequencies $f=2,4,8,10,12,\SI{16}{\giga\hertz}$. Based on the imaging results, it is possible to recognize the existence of $D$; however, it is very difficult to recognize the shape of $D$ when $f=2,\SI{4}{\giga\hertz}$. Fortunately, when the applied frequency is sufficiently high ($f\geq\SI{8}{\giga\hertz}$), it is possible to recognize the outline shape of $D$.
\end{example}

\begin{figure}[h]
\begin{center}
\subfigure[$f=\SI{2}{\giga\hertz}$]{\includegraphics[width=.33\columnwidth]{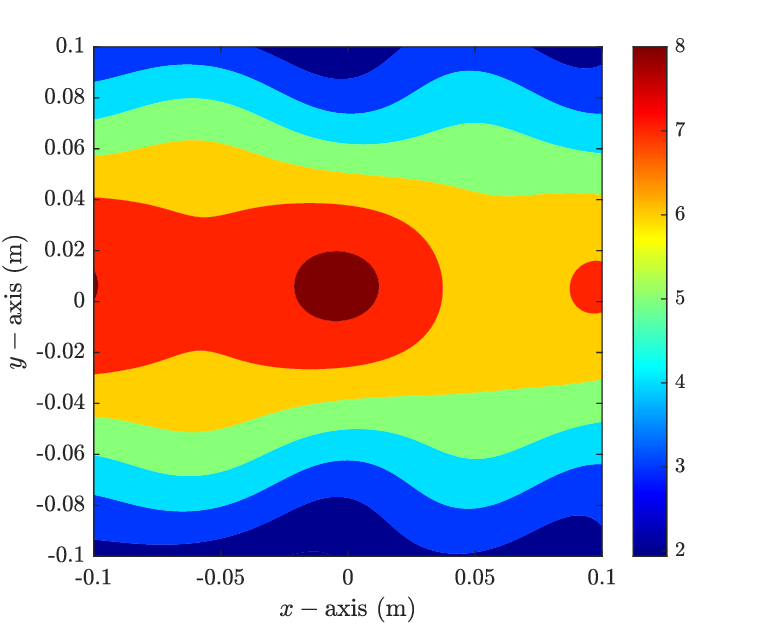}}\hfill
\subfigure[$f=\SI{4}{\giga\hertz}$]{\includegraphics[width=.33\columnwidth]{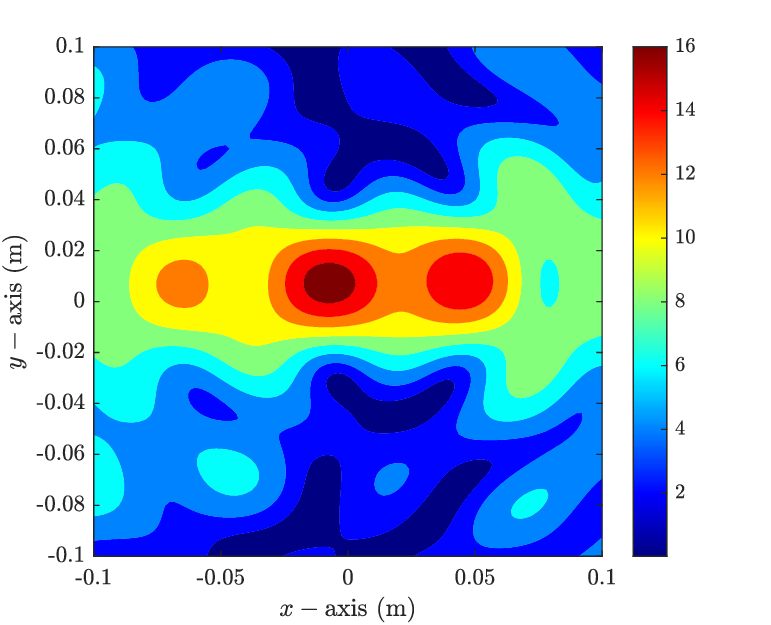}}\hfill
\subfigure[$f=\SI{8}{\giga\hertz}$]{\includegraphics[width=.33\columnwidth]{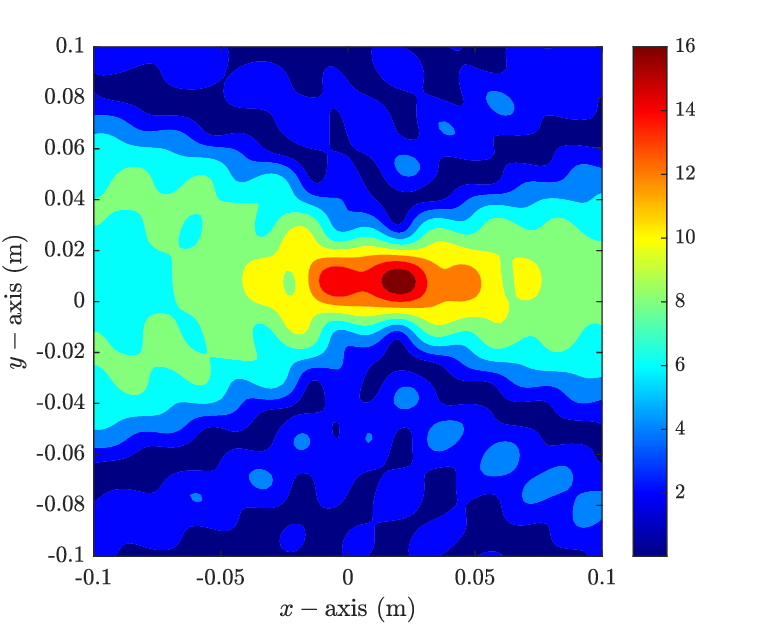}}\\
\subfigure[$f=\SI{10}{\giga\hertz}$]{\includegraphics[width=.33\columnwidth]{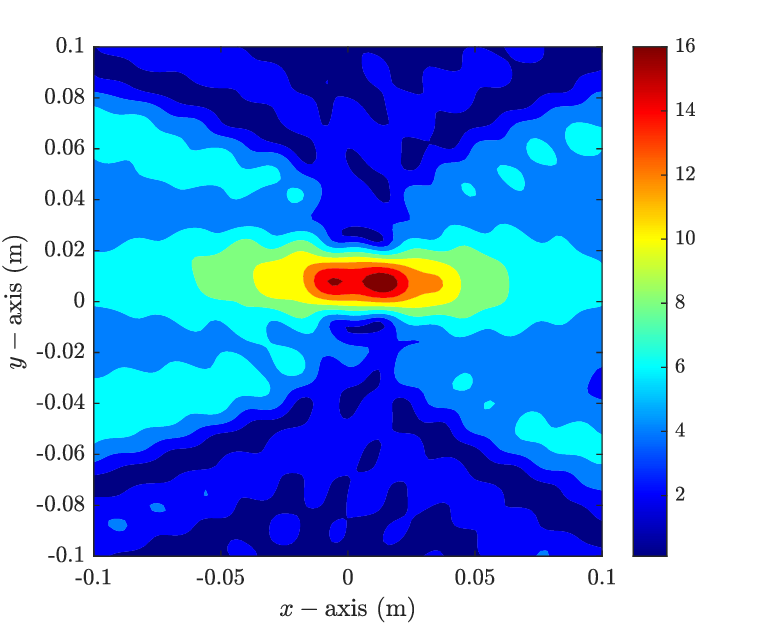}}\hfill
\subfigure[$f=\SI{12}{\giga\hertz}$]{\includegraphics[width=.33\columnwidth]{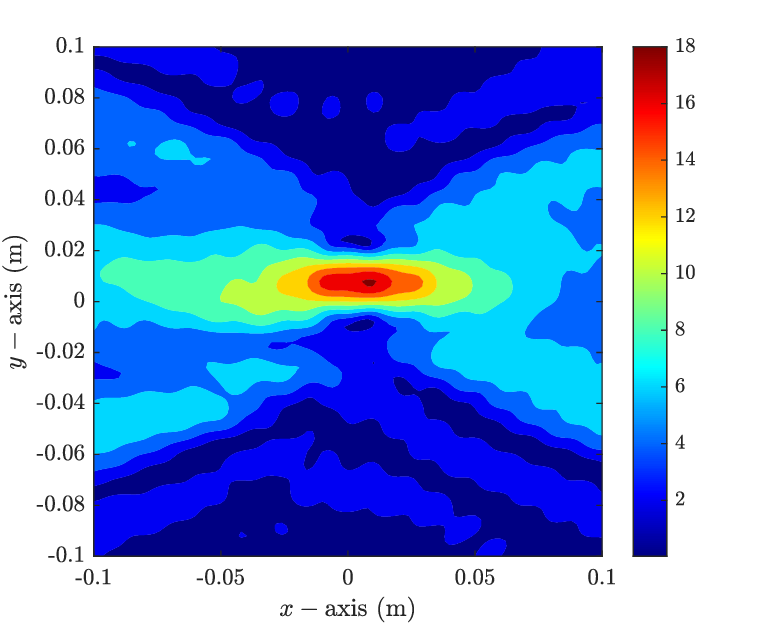}}\hfill
\subfigure[$f=\SI{16}{\giga\hertz}$]{\includegraphics[width=.33\columnwidth]{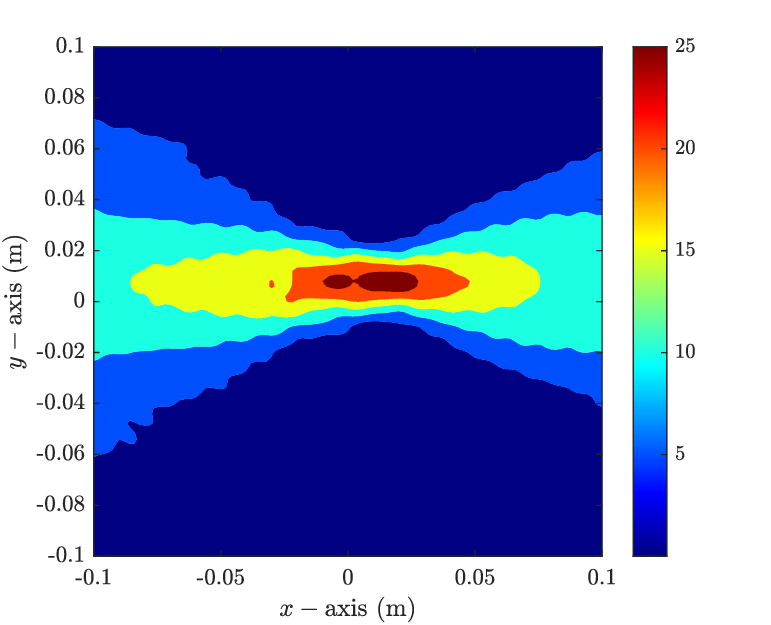}}
\caption{\label{Fresnel_Single1} (Example \ref{Ex_Fresnel_Single1}) Maps of $\mathfrak{F}_{\osm}(\mr,1)$.}
\end{center}
\end{figure}

\begin{example}\label{Ex_Fresnel_Single10}
Figure \ref{Fresnel_Single10} shows the maps of $\mathfrak{F}_{\osm}(\mr,10)$ at several frequencies. In contrast to the imaging with $\vv_1$ shown in Figure \ref{Fresnel_Single1}, here, it is impossible to recognize the existence of $D$ at $f=2,\SI{4}{\giga\hertz}$. Note that although the existence of $D$ can be recognized when the applied frequency is sufficiently high ($f\geq\SI{8}{\giga\hertz}$), the identified shape differs from the true one because the peak with a large magnitude spreads parallel to the direction vector $\vv_{10}$.
\end{example}

\begin{figure}[h]
\begin{center}
\subfigure[$f=\SI{2}{\giga\hertz}$]{\includegraphics[width=.33\columnwidth]{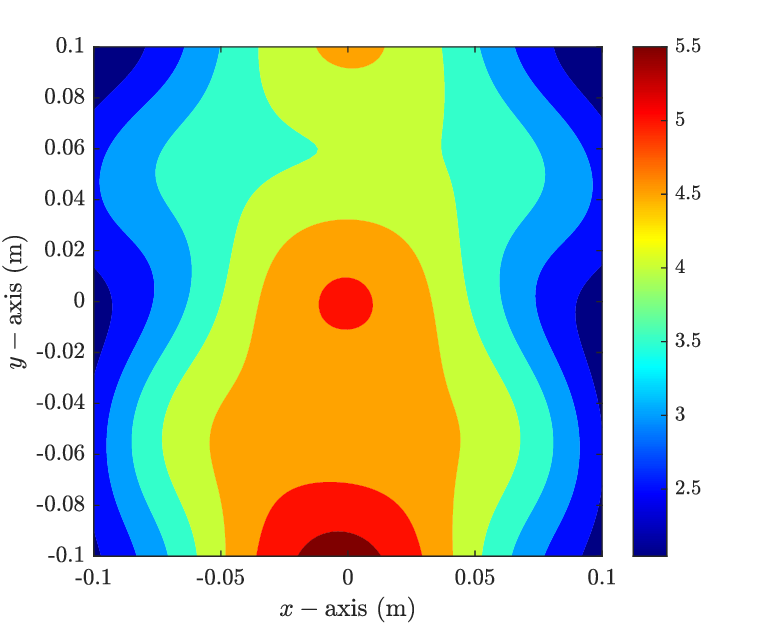}}\hfill
\subfigure[$f=\SI{4}{\giga\hertz}$]{\includegraphics[width=.33\columnwidth]{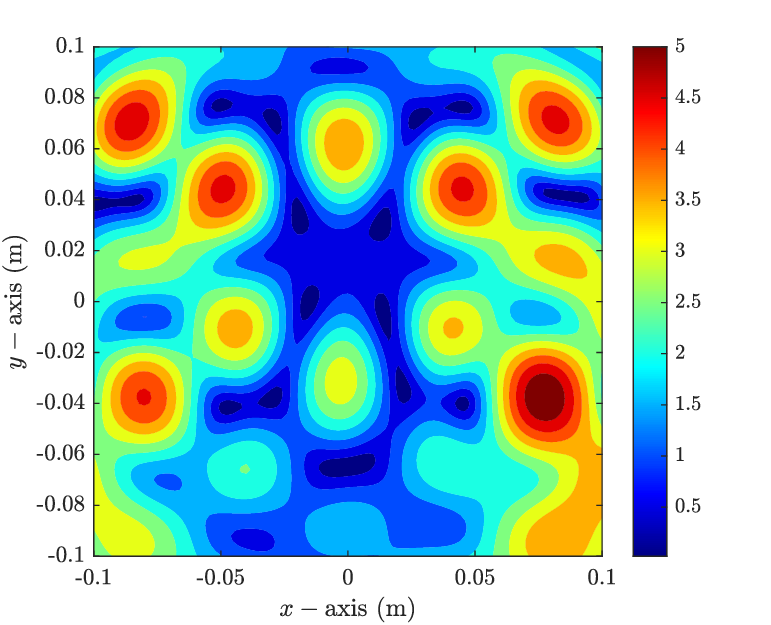}}\hfill
\subfigure[$f=\SI{8}{\giga\hertz}$]{\includegraphics[width=.33\columnwidth]{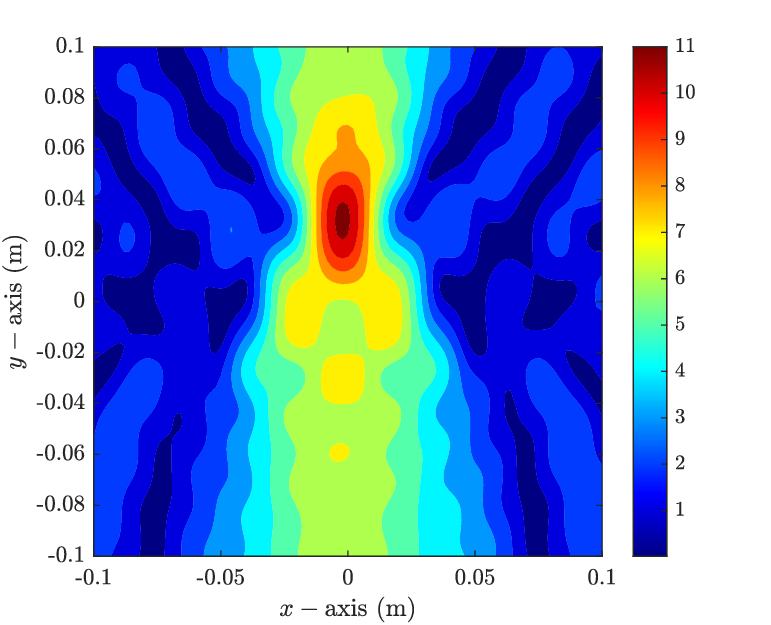}}\\
\subfigure[$f=\SI{10}{\giga\hertz}$]{\includegraphics[width=.33\columnwidth]{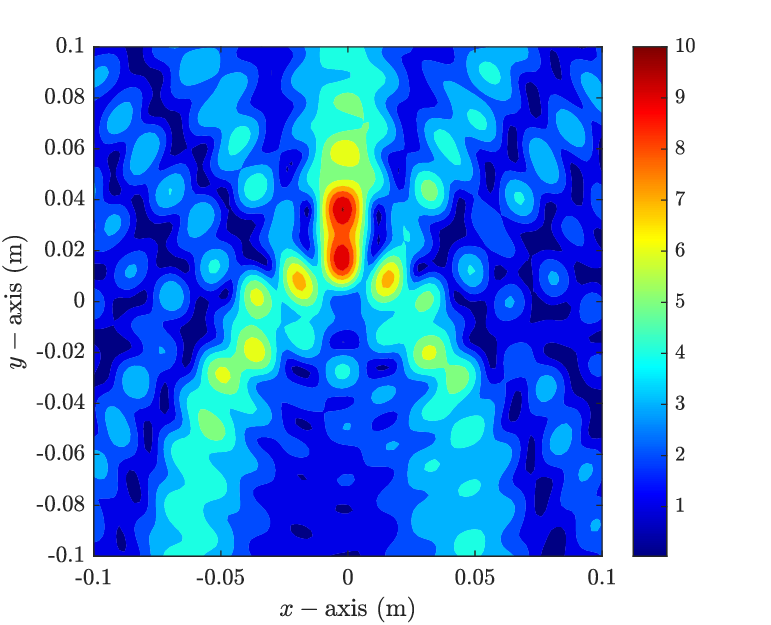}}\hfill
\subfigure[$f=\SI{12}{\giga\hertz}$]{\includegraphics[width=.33\columnwidth]{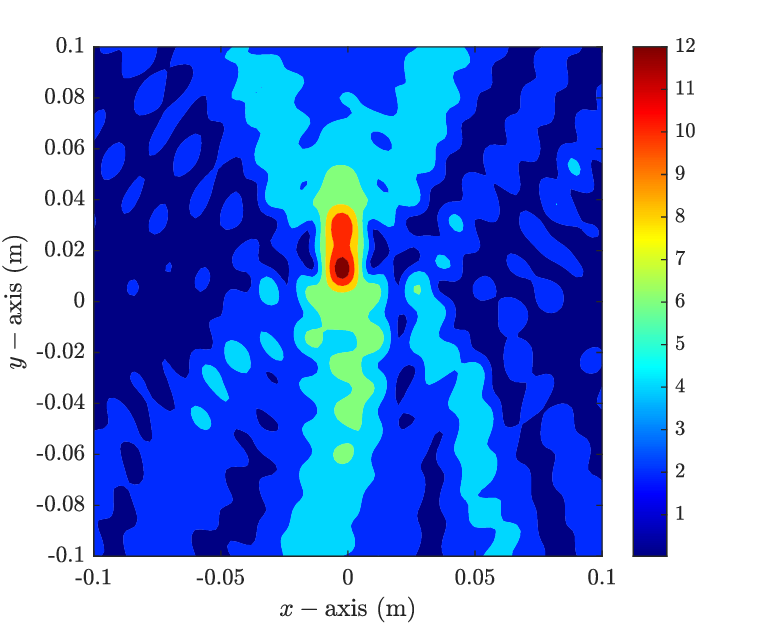}}\hfill
\subfigure[$f=\SI{16}{\giga\hertz}$]{\includegraphics[width=.33\columnwidth]{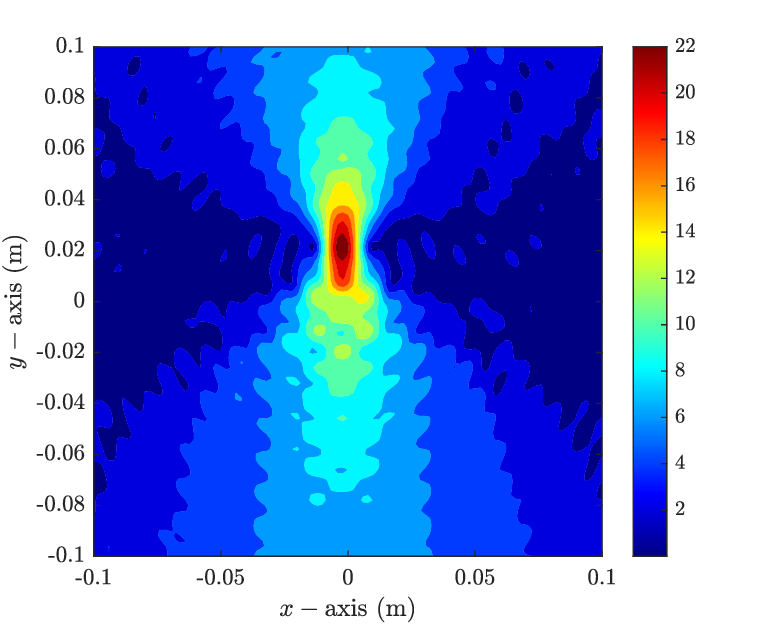}}
\caption{\label{Fresnel_Single10}(Example \ref{Ex_Fresnel_Single10}) Maps of $\mathfrak{F}_{\osm}(\mr,10)$.}
\end{center}
\end{figure}

\begin{example}\label{Ex_Fresnel_Single25}
Figure \ref{Fresnel_Single25} shows the maps of $\mathfrak{F}_{\osm}(\mr,25)$ at several frequencies. Based on the imaging results, similar phenomena in Examples \ref{Ex_Fresnel_Single1} and \ref{Ex_Fresnel_Single10} can be examined. While the outline shape of $D$ was detected in Example \ref{Ex_Fresnel_Single1} and the  approximate location of $D$ was retrieved in Example \ref{Ex_Fresnel_Single10}, no information about the $D$ can be obtained from the maps of $\mathfrak{F}_{\osm}(\mr,25)$. Roughly speaking, Only the presence of $D$ near the origin can be confirmed when $f\geq\SI{8}{\giga\hertz}$.
\end{example}

\begin{figure}[h]
\begin{center}
\subfigure[$f=\SI{2}{\giga\hertz}$]{\includegraphics[width=.33\columnwidth]{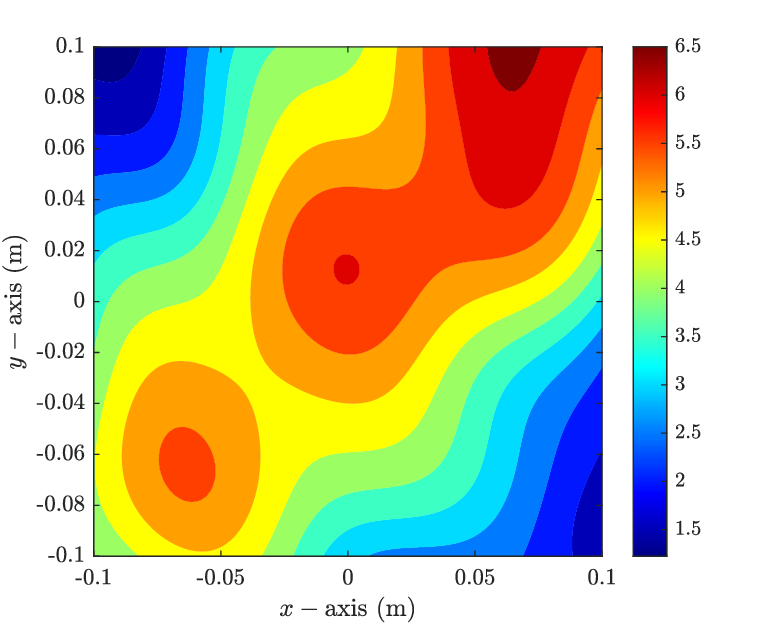}}\hfill
\subfigure[$f=\SI{4}{\giga\hertz}$]{\includegraphics[width=.33\columnwidth]{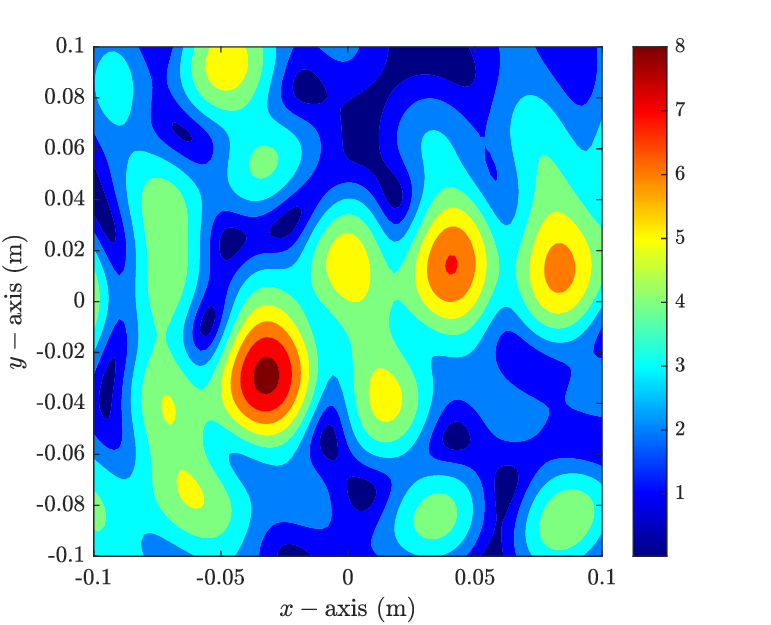}}\hfill
\subfigure[$f=\SI{8}{\giga\hertz}$]{\includegraphics[width=.33\columnwidth]{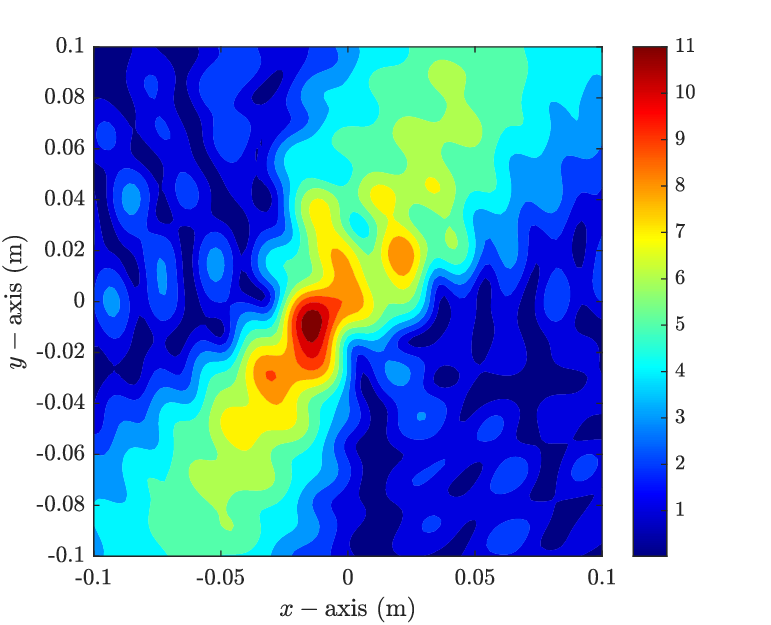}}\\
\subfigure[$f=\SI{10}{\giga\hertz}$]{\includegraphics[width=.33\columnwidth]{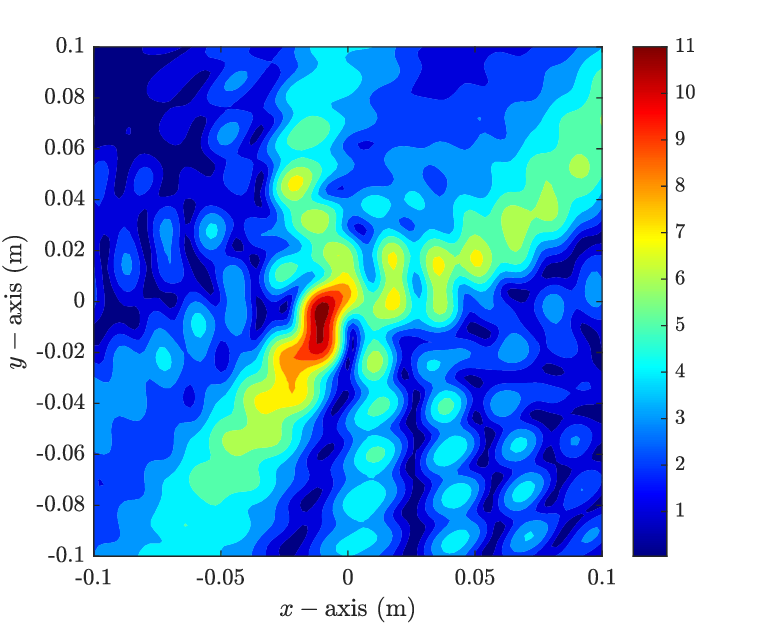}}\hfill
\subfigure[$f=\SI{12}{\giga\hertz}$]{\includegraphics[width=.33\columnwidth]{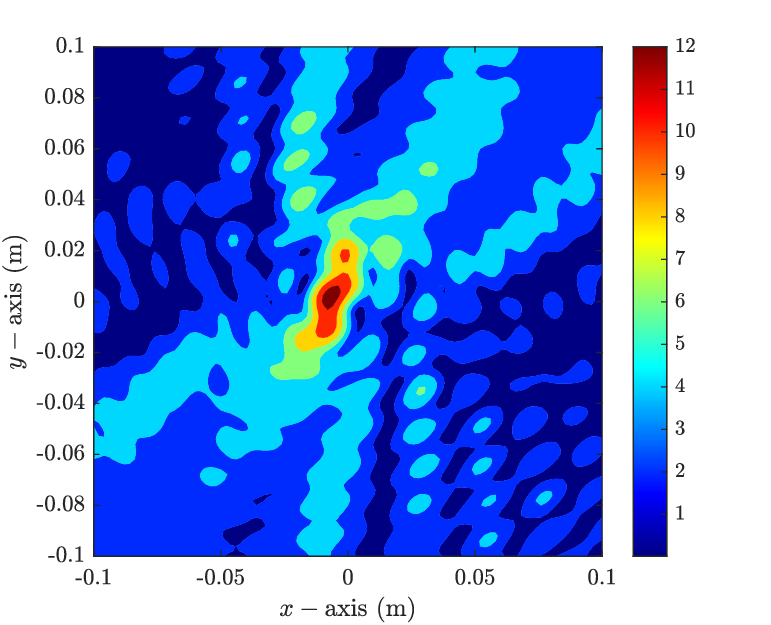}}\hfill
\subfigure[$f=\SI{16}{\giga\hertz}$]{\includegraphics[width=.33\columnwidth]{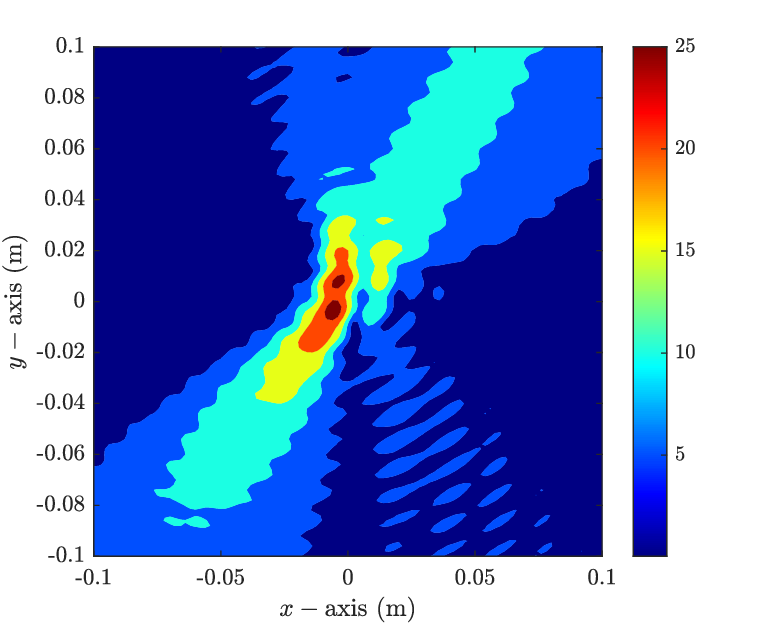}}
\caption{\label{Fresnel_Single25}(Example \ref{Ex_Fresnel_Single25}) Maps of $\mathfrak{F}_{\osm}(\mr,25)$.}
\end{center}
\end{figure}

Based on the simulation results, in addition to Remarks \ref{Remark_Single1} and \ref{Remark_Single4}, we can conclude that the imaging performance of the OSM with a single source is strongly dependent on the operated frequency and the location of the emitter. Thus, another indicator function must be developed with multiple sources to improve the imaging performance of the OSM.

\section{Imaging function with multiple sources}\label{sec:5}
Based on the theoretical and numerical simulation results in Sections \ref{sec:3} and \ref{sec:4}, further improvement is essential for a proper identification of small objects. Previous studies \cite{ACP,AGJKLSW,AGKPS,P-SUB3,P-SUB16,P-SUB18,P1} have confirmed that applying multiple sources and/or frequencies guarantees improved imaging performance. Here, we consider the single-frequency OSM with multiple sources, and we introduce the following indicator function with multiple sources:
\begin{equation}\label{ImagingFunction_Multiple}
\mathfrak{F}_{\msm}(\mr)=\abs{\sum_{m=1}^{M}\mE(m)\cdot\overline{\mG(\mr)}}=\left|\sum_{m=1}^{M}\sum_{n=1}^{N}u_{\scat}(\mb_{m,n},\ma_m)\overline{\nabla G(\mb_{m,N},\mr)\cdot\nabla G(\ma_m,\mr)}\right|.
\end{equation}
To demonstrate the feasibility and improvement of this multisource indicator function, we derive the following result. Prior to the derivation, we emphasize that, in contrast to the receiver setting, the range of the emitters is $\SI{0}{\degree}$ to $\SI{350}{\degree}$ with a step size of $\SI{10}{\degree}$.

\begin{theorem}\label{OSM_Multiple}
  Let $\vv_m=(\cos\vartheta_m,\sin\vartheta_m)$, $\vv=(\cos\vartheta,\sin\vartheta)$, and $\mr-\mr_s=|\mr-\mr_s|(\cos\phi_s,\sin\phi_s)$. Here, if $4k|\mr-\mb_{m,n}|\gg1$ and $4k|\mr-\ma_m|\gg1$ for all $m=1,2,\ldots,M$ and $n=1,2,\ldots,N$, then $\mathfrak{F}_{\msm}(\mr)$ can be expressed as follows:
  \begin{equation}\label{Structure_Multiple}
    \mathfrak{F}_{\msm}(\mr)=\left|\frac{MNk^2}{4AB}\sum_{s=1}^{S}\alpha_s^2\pi\left(\frac{\mu_0}{\mu_s+\mu_0}\right)\left[\left(\frac12-\frac{3\sqrt{3}}{16\pi}\right)J_0(k|\mr-\mr_s|)^2+J_2(k|\mr-\mr_s|)^2+\mathcal{E}_{\msm}(\mr)\right]\right|,
  \end{equation}
  where 
  \begin{align*}
  \mathcal{E}_{\msm}(\mr)&=\frac{3}{2\pi}\sum_{q=1}^{\infty}\frac{(-1)^{q}}{q}\sin\frac{2q\pi}{3}J_q(k|\mr-\mr_s|)^2\\
&+\frac{3}{2\pi}\sum_{q=1}^{\infty}\frac{(-1)^{q+1}}{q+2}\sin\frac{2(q+2)\pi}{3}J_q(k|\mr-\mr_s|)^2+\frac{3}{2\pi}\sum_{q=1,q\ne2}^{\infty}\frac{(-1)^{q-1}}{q-2}\sin\frac{2(q-2)\pi}{3}J_q(k|\mr-\mr_s|)^2.
  \end{align*}
\end{theorem}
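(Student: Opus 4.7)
The plan is to extend the proof of Theorem \ref{OSM_Single} by one additional averaging step over the source index $m$, replacing the discrete sum over $m$ by a Riemann integral on $[0,2\pi]$ and evaluating the resulting integrals via a second application of the Jacobi-Anger expansion \eqref{JacobiAnger}. Concretely, I would first repeat the far-field manipulations \eqref{Asymptotic_Hankel}-\eqref{Representation_exp} verbatim, then apply Lemma \ref{Lemma_Integration} to the inner sum over $n$ exactly as in Theorem \ref{OSM_Single}. This yields, for each fixed $m$, a representation of the form
\[\mE(m)\cdot\overline{\mG(\mr)}\approx\frac{Nk^2}{4AB}\sum_{s=1}^{S}\alpha_s^2\pi\left(\frac{\mu_0}{\mu_s+\mu_0}\right)e^{ik\vv_m\cdot(\mr-\mr_s)}\mathcal{Q}(\vartheta_m,\phi_s,k|\mr-\mr_s|),\]
where $\mathcal{Q}$ is the Bessel-series bracket from \eqref{Structure_Single}, viewed as a trigonometric series in $\vartheta_m$ whose terms are either $\vartheta_m$-independent or of the form $\cos(q(\vartheta_m-\phi_s))J_q(k|\mr-\mr_s|)$.

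Next I would sum over $m=1,\ldots,M$. Because $\vartheta_m=2(m-1)\pi/M$ is equispaced on $[0,2\pi)$ with step $\Delta\vartheta=2\pi/M$, for $M$ sufficiently large we may replace $\sum_{m=1}^{M}F(\vartheta_m)$ by $\frac{M}{2\pi}\int_0^{2\pi}F(\vartheta)\,d\vartheta$ uniformly on the compact imaging region. Thus the computation reduces to evaluating, for each integer $q\geq 0$, the integrals
\[\int_0^{2\pi}\cos(q(\vartheta-\phi_s))\,e^{ik|\mr-\mr_s|\cos(\vartheta-\phi_s)}\,d\vartheta.\]
Substituting the Jacobi-Anger expansion \eqref{JacobiAnger} into the exponential and invoking the orthogonality relations $\int_0^{2\pi}\cos(p\alpha)\cos(q\alpha)\,d\alpha=\pi\delta_{pq}$ (for $p,q\geq 1$) together with $\int_0^{2\pi}\cos(q\alpha)\,d\alpha=0$ (for $q\geq 1$), each such integral collapses to $2\pi\,i^q J_q(k|\mr-\mr_s|)$, with the convention $i^0=1$.

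Plugging this back, the constant-in-$\vartheta_m$ term $\bigl(\frac{1}{2}-\frac{3\sqrt{3}}{16\pi}\bigr)J_0(k|\mr-\mr_s|)$ is promoted to $\bigl(\frac{1}{2}-\frac{3\sqrt{3}}{16\pi}\bigr)J_0(k|\mr-\mr_s|)^2$, the term $-\cos(2\vartheta_m-2\phi_s)J_2$ becomes $-i^2 J_2^2=J_2(k|\mr-\mr_s|)^2$, and every series in $\mathcal{E}_{\osm}(\mr,m)$ of the form $c_q\cos(q(\vartheta_m-\phi_s))J_q$ produces a squared-Bessel series with coefficient $c_q\,i^q$; after factoring out the common scalar $MN$ this reproduces the three subseries constituting $\mathcal{E}_{\msm}(\mr)$ and gives the claimed identity \eqref{Structure_Multiple}. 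The main obstacle I foresee is the sign bookkeeping: each of the three subseries in $\mathcal{E}_{\osm}$ carries its own complex prefactor (essentially $i^q$ or $(-i)^q$), and these phases must be combined with the Jacobi-Anger factor $i^q$ so that the final coefficients collapse to the real sign patterns $(-1)^q$, $(-1)^{q+1}$, $(-1)^{q-1}$ displayed in $\mathcal{E}_{\msm}(\mr)$. A secondary technical point is the exchange of the infinite Bessel series with the $\vartheta$-integral, which is however immediate from the uniform convergence of \eqref{JacobiAnger} on compact subsets of the plane already tacitly used in Lemma \ref{Lemma_Integration}.
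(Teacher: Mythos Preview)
Your proposal is correct and follows essentially the same route as the paper: start from the single-source structure \eqref{Structure_Single}, replace the sum over $m$ by $\tfrac{M}{2\pi}\int_0^{2\pi}$, expand $e^{ik\vv_m\cdot(\mr-\mr_s)}$ via Jacobi--Anger, and use the cosine orthogonality relations on $[0,2\pi]$ to collapse each term to a squared Bessel function. The only cosmetic difference is that the paper organizes the product of the Jacobi--Anger series with the bracket from \eqref{Structure_Single} into six explicit pieces $\mathcal{I}_1,\ldots,\mathcal{I}_6$ before integrating, whereas you package the same computation into the single identity $\int_0^{2\pi}\cos(q\alpha)e^{ix\cos\alpha}\,d\alpha=2\pi i^q J_q(x)$.
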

\begin{proof}
Note that the following Jacobi–Anger expansion holds uniformly (see \cite{CK} for instance):
\[e^{ik\vv_m\cdot(\mr-\mr_s)}=J_0(k|\mr-\mr_s|)+2\sum_{p=1}^{\infty}i^p J_p(k|\mr-\mr_s|)\cos(p\vartheta_m-p\phi_s).\]
Thus, based on the structure \eqref{Structure_Single}, we obtain the following:
\begin{align*}
&\sum_{m=1}^{M}e^{ik\vv_m\cdot(\mr-\mr_s)}\left[\left(\frac12-\frac{3\sqrt{3}}{16\pi}\right)J_0(k|\mr-\mr_s|)-\cos(2\vartheta_m-2\phi_s)J_2(k|\mr-\mr_s|)+\mathcal{E}_{\osm}(\mr,m)\right]\\
&=\sum_{m=1}^{M}\left(J_0(k|\mr-\mr_s|)+2\sum_{p=1}^{\infty}i^p J_p(k|\mr-\mr_s|)\cos(p\vartheta_m-p\phi_s)\right)\\
&\phantom{=}\times\left[\left(\frac12-\frac{3\sqrt{3}}{16\pi}\right)J_0(k|\mr-\mr_s|)-\cos(2\vartheta_m-2\phi_s)J_2(k|\mr-\mr_s|)+\mathcal{E}_{\osm}(\mr,m)\right]\\
&=\mathcal{I}_1+\mathcal{I}_2+\mathcal{I}_3+\mathcal{I}_4+\mathcal{I}_5+\mathcal{I}_6,
\end{align*}
where
\begin{align*}
\mathcal{I}_1&=\sum_{m=1}^{M}\left(\frac12-\frac{3\sqrt{3}}{16\pi}\right)J_0(k|\mr-\mr_s|)^2,\\
\mathcal{I}_2&=-\sum_{m=1}^{M}\cos(2\vartheta_m-2\phi_s)J_0(k|\mr-\mr_s|)J_2(k|\mr-\mr_s|),\\
\mathcal{I}_3&=J_0(k|\mr-\mr_s|)\sum_{m=1}^{M}\mathcal{E}_{\osm}(\mr,m),\\
\mathcal{I}_4&=\left(1-\frac{3\sqrt{3}}{8\pi}\right)\sum_{p=1}^{\infty}i^pJ_0(k|\mr-\mr_s|)J_p(k|\mr-\mr_s|)\sum_{m=1}^{M}\cos(p\vartheta_m-p\phi_s),\\
\mathcal{I}_5&=-2\sum_{p=1}^{\infty}i^pJ_2(k|\mr-\mr_s|)J_p(k|\mr-\mr_s|)\sum_{m=1}^{M}\cos(2\vartheta_m-2\phi_s)\cos(p\vartheta_m-p\phi_s),\\
\mathcal{I}_6&=2\sum_{p=1}^{\infty}i^p J_p(k|\mr-\mr_s|)\sum_{m=1}^{M}\mathcal{E}_{\osm}(\mr,m)\cos(p\vartheta_m-p\phi_s).
\end{align*}

First, direct calculation yields:
\begin{align}
\begin{aligned}\label{I1234}
\mathcal{I}_1=&M\left(\frac12-\frac{3\sqrt{3}}{16\pi}\right)J_0(k|\mr-\mr_s|)^2,\\
\mathcal{I}_2\approx&-\frac{M}{2\pi}J_0(k|\mr-\mr_s|)J_2(k|\mr-\mr_s|)\int_0^{2\pi}\cos(2\vartheta-2\phi_s)d\vartheta=0,\\
\mathcal{I}_3\approx&\frac{3M}{4\pi^2}\sum_{q=1}^{\infty}\frac{i^q}{q}\sin\frac{2q\pi}{3}J_0(k|\mr-\mr_s|)J_q(k|\mr-\mr_s|)\int_0^{2\pi}\cos(q\vartheta-q\phi_s)d\vartheta\\
&+\frac{3M}{4\pi^2}\sum_{q=1}^{\infty}\frac{(-i)^q}{q+2}\sin\frac{2(q+2)\pi}{3}J_0(k|\mr-\mr_s|)J_q(k|\mr-\mr_s|)\int_0^{2\pi}\cos(q\vartheta-q\phi_s)d\vartheta\\
&+\frac{3M}{4\pi^2}\sum_{q=1,q\ne2}^{\infty}\frac{(-i)^q}{q-2}\sin\frac{2(q-2)\pi}{3}J_0(k|\mr-\mr_s|)J_q(k|\mr-\mr_s|)\int_0^{2\pi}\cos(q\vartheta-q\phi_s)d\vartheta=0,\\
\mathcal{I}_4\approx&\frac{M}{2\pi}\left(1-\frac{3\sqrt{3}}{8\pi}\right)\sum_{p=1}^{\infty}i^pJ_0(k|\mr-\mr_s|)J_p(k|\mr-\mr_s|)\int_0^{2\pi}\cos(p\vartheta-p\phi_s)d\vartheta=0.
\end{aligned}
\end{align}

Next, for any integers $p$ and $q$, since
\[\int_0^{2\pi}\cos(p\vartheta+\alpha)\cos(q\vartheta-q\phi_s)d\vartheta=\left\{\begin{array}{cl}
\pi,&p=q\\
0,&\text{otherwise},
\end{array}\right.\]
we have
\begin{align}
\begin{aligned}\label{I5}
\mathcal{I}_5&\approx-\frac{M}{\pi}\sum_{p=1}^{\infty}i^pJ_2(k|\mr-\mr_s|)J_p(k|\mr-\mr_s|)\int_0^{2\pi}\cos(2\vartheta-2\phi_s)\cos(p\vartheta-p\phi_s)d\vartheta=MJ_2(k|\mr-\mr_s|)^2
\end{aligned}
\end{align}
and
\begin{align}
\begin{aligned}\label{I6}
\mathcal{I}_6\approx&\frac{3M}{2\pi^2}\sum_{p=1}^{\infty}\sum_{q=1}^{\infty}\frac{i^{p+q}}{q}\sin\frac{2q\pi}{3}J_p(k|\mr-\mr_s|)J_q(k|\mr-\mr_s|)\int_0^{2\pi}\cos(p\vartheta-p\phi_s)\cos(q\vartheta-q\phi_s)d\vartheta\\
&+\frac{3M}{2\pi^2}\sum_{p=1}^{\infty}\sum_{q=1}^{\infty}\frac{i^{p+q+2}}{q+2}\sin\frac{2(q+2)\pi}{3}J_p(k|\mr-\mr_s|)J_q(k|\mr-\mr_s|)\int_0^{2\pi}\cos(p\vartheta-p\phi_s)\cos(q\vartheta-q\phi_s)d\vartheta\\
&+\frac{3M}{2\pi^2}\sum_{p=1}^{\infty}\sum_{q=1,q\ne2}^{\infty}\frac{i^{p+q-2}}{q-2}\sin\frac{2(q-2)\pi}{3}J_p(k|\mr-\mr_s|)J_q(k|\mr-\mr_s|)\int_0^{2\pi}\cos(p\vartheta-p\phi_s)\cos(q\vartheta-q\phi_s)d\vartheta\\
=&\frac{3M}{2\pi}\sum_{q=1}^{\infty}\frac{(-1)^{q}}{q}\sin\frac{2q\pi}{3}J_q(k|\mr-\mr_s|)^2+\frac{3M}{2\pi}\sum_{p=1}^{\infty}\frac{(-1)^{q+1}}{q+2}\sin\frac{2(q+2)\pi}{3}J_q(k|\mr-\mr_s|)^2\\
&+\frac{3M}{2\pi}\sum_{q=1,q\ne2}^{\infty}\frac{(-1)^{q-1}}{q-2}\sin\frac{2(q-2)\pi}{3}J_q(k|\mr-\mr_s|)^2.
\end{aligned}
\end{align}
Thus, by combining \eqref{I1234}, \eqref{I5}, and \eqref{I6}, we obtain
\[\sum_{m=1}^{M}\mE(m)\cdot\overline{\mG(\mr)}=\frac{MNk^2}{4AB}\sum_{s=1}^{S}\alpha_s^2\pi\left(\frac{\mu_0}{\mu_s+\mu_0}\right)\left[\left(\frac12-\frac{3\sqrt{3}}{16\pi}\right)J_0(k|\mr-\mr_s|)^2+J_2(k|\mr-\mr_s|)^2+\mathcal{E}_{\msm}(\mr)\right]\]
and correspondingly, \eqref{Structure_Multiple} can be obtained.
\end{proof}

Based on Theorems \ref{OSM_Single} and \ref{OSM_Multiple}, we can obtain the Corollary \ref{Corollary_Comparison}. Before the derivation, we examine the following relationships.

\begin{lemma}\label{LemmaInequalities}For sufficiently large $Q$, the following inequalities hold
\begin{equation}\label{SeriesInequality}
\mathcal{S}_1(Q)<\mathcal{S}_2(Q).
\end{equation}
where
\[\mathcal{S}_1(Q)=\sum_{q=1,q\ne 2}^{Q}\frac{1}{q-2}+\sum_{q=1}^{Q}\frac{1}{q+2}\quad\text{and}\quad\mathcal{S}_2(Q)=\sum_{q=1}^{Q}\frac{2}{q}.\]
\end{lemma}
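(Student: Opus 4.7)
The plan is to reduce both sides to partial harmonic sums via reindexing, so that the inequality becomes a transparent comparison of harmonic numbers.

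First, I would isolate the $q=1$ term of the first sum in $\mathcal{S}_1(Q)$, which contributes $1/(1-2)=-1$, and then substitute $j=q-2$ in the remaining range $q=3,\ldots,Q$ to write it as $H_{Q-2}:=\sum_{j=1}^{Q-2}1/j$. For the second sum of $\mathcal{S}_1(Q)$, the substitution $j=q+2$ gives $\sum_{j=3}^{Q+2}1/j=H_{Q+2}-1-\tfrac{1}{2}$. Combining these,
\[
\mathcal{S}_1(Q)=-1+H_{Q-2}+H_{Q+2}-\frac{3}{2}=H_{Q-2}+H_{Q+2}-\frac{5}{2},
\qquad
\mathcal{S}_2(Q)=2H_Q,
\]
so the desired inequality \eqref{SeriesInequality} is equivalent to $H_{Q-2}+H_{Q+2}-2H_Q<\tfrac{5}{2}$.

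Next I would compute the left-hand side exactly using the telescoping identities $H_{Q+2}=H_Q+\tfrac{1}{Q+1}+\tfrac{1}{Q+2}$ and $H_{Q-2}=H_Q-\tfrac{1}{Q-1}-\tfrac{1}{Q}$, which yield
\[
H_{Q-2}+H_{Q+2}-2H_Q=\frac{1}{Q+1}+\frac{1}{Q+2}-\frac{1}{Q-1}-\frac{1}{Q}.
\]
For $Q\ge 2$ this quantity is non-positive, because the subtracted terms have strictly smaller denominators than the added ones; more importantly, as $Q\to\infty$ the whole expression tends to $0$. Hence
\[
\mathcal{S}_1(Q)-\mathcal{S}_2(Q)=\frac{1}{Q+1}+\frac{1}{Q+2}-\frac{1}{Q-1}-\frac{1}{Q}-\frac{5}{2}=-\frac{5}{2}+o(1),
\]
which is strictly negative for all sufficiently large $Q$, establishing \eqref{SeriesInequality}.

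There is no real obstacle here; the only thing to watch is the bookkeeping around the excluded index $q=2$ and the offset between the lower limit of the re-indexed sums and the full harmonic number $H_n$. Once this bookkeeping is done correctly, the asymptotic behavior of the difference is governed by the $-\tfrac{5}{2}$ constant, so the conclusion is immediate.
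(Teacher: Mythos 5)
Your proof is correct, and it is in fact sharper than the paper's. Both arguments begin the same way, by reindexing the three sums into harmonic numbers $H_n$, but they diverge afterwards: the paper expands each $H_n$ via the Euler--Maclaurin formula (introducing $\ln Q$, the Euler--Mascheroni constant $\gamma$, and $O(1/Q^2)$ error terms) and concludes from the sign of the resulting asymptotic expression, whereas you cancel the harmonic numbers exactly through the telescoping identities $H_{Q\pm2}=H_Q\pm(\cdots)$ and obtain the closed form
\[
\mathcal{S}_2(Q)-\mathcal{S}_1(Q)=\frac{5}{2}+\frac{1}{Q-1}+\frac{1}{Q}-\frac{1}{Q+1}-\frac{1}{Q+2},
\]
which is manifestly positive for every $Q\ge 3$ (and the inequality is easily checked directly for $Q=1,2$), so no appeal to ``sufficiently large $Q$'' is needed. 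Your route buys three things: it is entirely elementary (no asymptotic expansion, no $\gamma$), it proves the inequality for all $Q$ rather than asymptotically, and it gives the exact gap, which is consistent with Table \ref{TableSeries} (e.g., $5.8579-3.3211=2.5368\approx 5/2+1/9+1/10-1/11-1/12$). It also incidentally exposes a small arithmetic slip in the paper's version: the constant subtracted from $\sum_{q=1}^{Q}1/(q+2)$ should be $1+\tfrac12=\tfrac32$, not $\tfrac{7}{12}$, so the paper's limiting constant $\tfrac{19}{12}$ should read $\tfrac{5}{2}$; this does not affect the sign, hence the lemma stands either way.
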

\begin{proof}
Based on the Euler–Maclaurin formula,
\[\sum_{q=1}^{Q}\frac{1}{q}=\ln Q+\gamma+\frac{1}{2Q}+c_Q=\ln Q+\gamma+\frac{1}{2Q}+O\left(\frac{1}{Q^2}\right),\quad0\leq c_Q \leq\frac{1}{8Q^2},\]
where $\gamma=0.577215665\ldots$ denotes the Euler-Mascheroni constant. Since
\begin{align*}
&\sum_{q=1,q\ne 2}^{Q}\frac{1}{q-2}=\ln(Q-2)+\gamma+\frac{1}{2(Q-2)}+O\left(\frac{1}{Q^2}\right)-1\\
&\sum_{q=1}^{Q}\frac{1}{q+2}=\ln(Q+2)+\gamma+\frac{1}{2(Q+2)}+O\left(\frac{1}{Q^2}\right)-\frac{7}{12},
\end{align*}
we can examine that
\[\mathcal{S}_2(Q)-\mathcal{S}_1(Q)=\ln\frac{Q^2}{Q^2-4}+\frac{1}{Q}-\frac{1}{2(Q^2-4)}+\frac{19}{12}+O\left(\frac{1}{Q^2}\right)>0.\]
Hence, the relationship \eqref{SeriesInequality} holds.
\end{proof}

\begin{remark}
In order to examine \eqref{SeriesInequality}, we refer to Table \ref{TableSeries} for numerical values of $\mathcal{S}_1(Q)$ and $\mathcal{S}_2(Q)$ with various $Q$.
\end{remark}

\begin{table}[h]           
\begin{center}
\begin{tabular}{crrrrrrrrr}\hline
$Q$&$10^1$&$10^2$&$10^3$&$10^4$&$10^5$&$10^6$&$10^7$&$10^8$&$10^9$\\\hline
$\mathcal{S}_1(Q)$&$3.3211$&$7.8744$&$12.4709$&$17.0752$&$21.6803$&$26.2855$&$30.8906$&$35.4958$&$40.1010$\\
$\mathcal{S}_2(Q)$&$5.8579$&$10.3748$&$14.9709$&$19.5752$&$24.1803$&$28.7855$&$33.3906$&$37.9958$&$42.6010$\\\hline
\end{tabular}
\caption{\label{TableSeries}Values of $\mathcal{S}_1(Q)$ and $\mathcal{S}_2(Q)$ with various $Q$.}
\end{center}
\end{table}

\begin{corollary}\label{Corollary_Comparison}
Under the same conditions as Theorems \ref{OSM_Single} and \ref{OSM_Multiple}, the following statements hold: for $m=1,2,\ldots,M$,
\begin{enumerate}
\item if $\mr$ is far away from $D_s$ then $|\mathcal{E}_{\osm}(\mr,m)|$ and $|\mathcal{E}_{\msm}(\mr)|$ are negligible,
\item if $\mr$ is close to $\mr_s$ then $|\mathcal{E}_{\msm}(\mr)|\ll|\mathcal{E}_{\osm}(\mr,m)|$.
\end{enumerate}
\end{corollary}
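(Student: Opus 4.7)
The plan is to analyze $\mathcal{E}_{\osm}(\mr,m)$ and $\mathcal{E}_{\msm}(\mr)$ pointwise in the variable $x:=k|\mr-\mr_s|$, combining the small- and large-argument asymptotics of the Bessel functions $J_q$ with the coefficient-sum bound provided by Lemma \ref{LemmaInequalities}.

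First I would verify absolute convergence of each series for every fixed $x$ using the universal estimate $|J_q(x)|\leq (x/2)^q/q!$, so that the tails decay faster than any geometric rate and termwise manipulations are legitimate. For statement (1), when $\mr$ is far from $\mr_s$ so that $x$ is large, I would split each summation at a cutoff $Q\sim x$: for $q>Q$ the factorial bound dominates and yields an exponentially small contribution, while for $q\leq Q$ the uniform large-argument bound $|J_q(x)|\leq\sqrt{2/(\pi x)}$ applies. Using $|\sin|,|\cos|\leq 1$ together with the $O(\log Q)$ growth of $\mathcal{S}_1(Q)$ and $\mathcal{S}_2(Q)$ furnished by Lemma \ref{LemmaInequalities}, I would conclude $|\mathcal{E}_{\osm}(\mr,m)|,\,|\mathcal{E}_{\msm}(\mr)|=O\bigl((\log x)/\sqrt{x}\bigr)$, which is negligible compared with the $O(1)$ peak magnitude $\tfrac{1}{2}-\tfrac{3\sqrt{3}}{16\pi}$ attained as $\mr\to\mr_s$.

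For statement (2), when $\mr$ is close to $\mr_s$, I would substitute the small-argument expansion $J_q(x)=(x/2)^q/q!+O(x^{q+2})$. A direct check at $q=1$ reveals that the three trigonometric prefactors cancel identically in both $\mathcal{E}_{\osm}$ and $\mathcal{E}_{\msm}$ (because $\sin(2\pi)=0$ and the remaining pair of contributions are equal in magnitude with opposite sign), so the leading nonvanishing contribution comes from $q=2$: one finds $|\mathcal{E}_{\osm}(\mr,m)|=O(J_2(x))=O(x^2)$, whereas $|\mathcal{E}_{\msm}(\mr)|=O(J_2(x)^2)=O(x^4)$, with the $q\geq 3$ tail contributing only strictly higher-order corrections by the same absolute convergence. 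The inequality $|\mathcal{E}_{\msm}(\mr)|\ll|\mathcal{E}_{\osm}(\mr,m)|$ then follows from the ratio $|\mathcal{E}_{\msm}|/|\mathcal{E}_{\osm}|=O(x^2)\to 0$ as $\mr\to\mr_s$.

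The main obstacle will be that the bare coefficient series $\sum 1/q$ diverges, so a naive bound obtained by pulling $|J_q(x)|\leq\sqrt{2/(\pi x)}$ outside the sum in part (1) is meaningless; this is precisely why Lemma \ref{LemmaInequalities} is indispensable here, since its sharp $O(\log Q)$ estimate on $\mathcal{S}_1(Q)$ is slow enough that truncation at $Q\sim x$ produces a finite, decaying bound. A secondary subtlety in part (2) is the need to verify the $q=1$ cancellation separately for $\mathcal{E}_{\osm}$ and $\mathcal{E}_{\msm}$; without this observation one would still obtain $|\mathcal{E}_{\msm}|\ll|\mathcal{E}_{\osm}|$, but only with the weaker leading orders $O(x)$ versus $O(x^2)$.
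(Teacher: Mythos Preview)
Your proposal is sound and, for part (1), essentially matches the paper's argument: truncate the series at some $Q$, apply the large-argument bound $|J_q(x)|\lesssim x^{-1/2}$ to the retained terms, and control the coefficient sum via Lemma~\ref{LemmaInequalities}. The paper fixes $Q$ through uniform convergence with a prescribed $\epsilon$, whereas you choose $Q\sim x$ adaptively; both lead to the same conclusion, and your version yields the slightly more explicit rate $O((\log x)/\sqrt{x})$. One small clarification: the role of Lemma~\ref{LemmaInequalities} is not to supply the $O(\log Q)$ growth (that is just the harmonic sum) but to bound $\mathcal{S}_1(Q)$ by $\mathcal{S}_2(Q)$ so that the three coefficient sums with denominators $q$, $q\pm2$ collapse into a single multiple of $\sum_{q\le Q}1/q$.

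For part (2) your route genuinely differs from the paper's. The paper bounds each series above by $\sum a_q$ and $\sum b_q$ with $a_q\propto (x/2)^q/(q\,q!)$ and $b_q\propto (x/2)^{2q}/(q\,(q!)^2)$, then observes $b_q/a_q\to 0$. You instead extract the leading small-$x$ behaviour directly, verifying that the $q=1$ coefficients cancel in both $\mathcal{E}_{\osm}$ and $\mathcal{E}_{\msm}$ so that the dominant contribution is $q=2$, giving $\mathcal{E}_{\osm}=O(x^2)$ versus $\mathcal{E}_{\msm}=O(x^4)$. Your argument is sharper and more transparent than the paper's (which does not notice the $q=1$ cancellation and therefore carries a non-tight $O(x)$ bound for $\mathcal{E}_{\osm}$); the trade-off is that to turn your asymptotic comparison into a strict pointwise inequality one would need a lower bound on $|\mathcal{E}_{\osm}|$, and the $q=2$ coefficient in $\mathcal{E}_{\osm}$ carries a factor $\cos(2\vartheta_m-2\phi_s)$ that can vanish for special directions. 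At the informal level of the corollary (``negligible'', ``$\ll$'') both approaches are adequate.
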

\begin{proof}
Since $\mathcal{E}_{\osm}(\mr,m)$ and $\mathcal{E}_{\msm}(\mr)$ are uniformly convergent, for each $\epsilon>0$, there exists a natural number $Q=Q(\epsilon)$ such that
\begin{multline*}
\left|\mathcal{E}_{\osm}(\mr,m)-\frac{3}{2\pi}\sum_{q=1}^{Q}\left(\frac{i^q}{q}\sin\frac{2q\pi}{3}\cos(\vartheta_m-\phi_s)+\frac{(-i)^q}{q+2}\sin\frac{2(q+2)\pi}{3}\cos(q\vartheta_m-q\phi_s)\right)J_q(k|\mr-\mr_s|)\right.\\
\left.-\frac{3}{2\pi}\sum_{q=1,q\ne2}^{Q}\frac{(-i)^q}{q-2}\sin\frac{2(q-2)\pi}{3}\cos(q\vartheta_m-q\phi_s)J_q(k|\mr-\mr_s|)\right|<\epsilon
\end{multline*}
and
\begin{multline*}
\left|\mathcal{E}_{\msm}(\mr)-\frac{3}{2\pi}\sum_{q=1}^{Q}\frac{(-1)^q}{q}\sin\frac{2q\pi}{3}J_q(k|\mr-\mr_s|)^2-\frac{3}{2\pi}\sum_{q=1}^{Q}\frac{(-1)^{q+1}}{q+2}\sin\frac{2(q+2)\pi}{3}J_q(k|\mr-\mr_s|)^2\right.\\
\left.-\frac{3}{2\pi}\sum_{q=1,q\ne2}^{\infty}\frac{(-1)^{q-1}}{q-2}\sin\frac{2(q-2)\pi}{3}J_q(k|\mr-\mr_s|)^2\right|<\epsilon.
  \end{multline*}

Suppose that $\mr'$ is not close to $\mr$ such that $k|\mr-\mr'|\gg0.25$. Then, since the following asymptotic form holds
\[J_q(k|\mr-\mr'|)\approx\sqrt{\frac{2}{k|\mr-\mr'|}}\cos\left(k|\mr-\mr'|-\frac{q\pi}{2}-\frac{\pi}{4}+O\left(\frac{1}{k|\mr-\mr'|}\right)\right)\leq\sqrt{\frac{2}{k|\mr-\mr'|}},\]
applying the Euler-Maclaurin formula and \eqref{SeriesInequality} yields
\begin{align*}
|\mathcal{E}_{\osm}(\mr,m)|&\leq\frac{3}{2\pi}\left|\sum_{q=1}^{Q}\left(\frac{1}{q}\sin\frac{2q\pi}{3}+\sum_{q=1}^{Q}\frac{1}{q+2}\sin\frac{2(q+2)\pi}{3}\right)J_q(k|\mr-\mr_s|)+\sum_{q=1,q\ne2}^{Q}\frac{J_q(k|\mr-\mr_s|)}{q-2}\sin\frac{2(q-2)\pi}{3}\right|\\
&\leq\frac{3\sqrt{3}}{4\pi}\sqrt{\frac{2}{k|\mr-\mr'|}}\left|\sum_{q=1}^{Q}\frac{1}{q}+\sum_{q=1}^{Q}\frac{1}{q+2}+\sum_{q=1,q\ne2}^{Q}\frac{1}{q-2}\right|\leq\frac{9\sqrt{3}}{4\pi}\sqrt{\frac{2}{k|\mr-\mr'|}}\sum_{q=1}^{Q}\frac{1}{q}\\
&=\frac{9\sqrt{3}}{4\pi}\sqrt{\frac{2}{k|\mr-\mr'|}}\left(\ln Q+\gamma+\frac{1}{2Q}-c_Q\right)\leq\frac{9\sqrt{3}}{4\pi}\sqrt{\frac{2}{k|\mr-\mr'|}}\left(\ln Q+\gamma+\frac{1}{2Q}\right)
\end{align*}
and
\begin{align*}
|\mathcal{E}_{\msm}(\mr)|&\leq\frac{3}{2\pi}\left|\sum_{q=1}^{Q}\left(\frac{1}{q}\sin\frac{2q\pi}{3}+\sum_{q=1}^{Q}\frac{1}{q+2}\sin\frac{2(q+2)\pi}{3}\right)J_q(k|\mr-\mr_s|)^2+\sum_{q=1,q\ne2}^{Q}\frac{J_q(k|\mr-\mr_s|)^2}{q-2}\sin\frac{2(q-2)\pi}{3}\right|\\
&\leq\frac{3\sqrt{3}}{2\pi k|\mr-\mr'|}\left|\sum_{q=1}^{Q}\frac{1}{q}+\sum_{q=1}^{Q}\frac{1}{q+2}+\sum_{q=1,q\ne2}^{Q}\frac{1}{q-2}\right|\leq\frac{9\sqrt{3}}{2\pi k|\mr-\mr'|}\left(\ln Q+\gamma+\frac{1}{2Q}\right).
\end{align*}
Since
\[\frac{1}{k|\mr-\mr'|}\ll O(1),\]
we can say that the values of $\mathcal{E}_{\osm}(\mr,m)$ and $\mathcal{E}_{\msm}(\mr)$ are negligible for all $m=1,2,\ldots,M$, if the search point is far away from the objects.

Now, assume that $\mr$ is close to $\mr'$ such that $0<k|\mr-\mr'|<\delta\ll o(1)$. In this case, based on the asymptotic form of the Bessel function
\[J_q(k|\mr-\mr'|)\approx\frac{1}{\Gamma(q+1)}\left(\frac{k|\mr-\mr'|}{2}\right)^q,\]
we can examine the following relationships by applying \eqref{SeriesInequality}
\begin{align*}
|\mathcal{E}_{\osm}(\mr,m)|&\leq\frac{3\sqrt{3}}{4\pi}\left|\sum_{q=1}^{Q}\left(\frac{1}{q}+\sum_{q=1}^{Q}\frac{1}{q+2}\right)\frac{1}{\Gamma(q+1)}\left(\frac{k|\mr-\mr'|}{2}\right)^q+\sum_{q=1,q\ne2}^{Q}\frac{1}{(q-2)\Gamma(q+1)}\left(\frac{k|\mr-\mr'|}{2}\right)^q\right|\\
&<\frac{9\sqrt{3}}{4\pi}\sum_{q=1}^{Q}\frac{1}{q\Gamma(q+1)}\left(\frac{k|\mr-\mr'|}{2}\right)^q:=\sum_{q=1}^{Q}a_q
\end{align*}
and
\begin{align*}
|\mathcal{E}_{\msm}(\mr)|&\leq\frac{3\sqrt{3}}{4\pi}\left|\sum_{q=1}^{Q}\left(\frac{1}{q}+\sum_{q=1}^{Q}\frac{1}{q+2}\right)\frac{1}{\Gamma(q+1)^2}\left(\frac{k|\mr-\mr'|}{2}\right)^{2q}+\sum_{q=1,q\ne2}^{Q}\frac{1}{(q-2)\Gamma(q+1)^2}\left(\frac{k|\mr-\mr'|}{2}\right)^{2q}\right|\\
&<\frac{9\sqrt{3}}{4\pi}\sum_{q=1}^{Q}\frac{1}{q\Gamma(q+1)^2}\left(\frac{k|\mr-\mr'|}{2}\right)^{2q}:=\sum_{q=1}^{Q}b_q,
\end{align*}
where $\Gamma(q)=(q-1)!$ denotes the Gamma function. Since
\[\lim_{q\to\infty}\frac{b_q}{a_q}=\lim_{q\to\infty}\frac{(k|\mr-\mr'|)^q}{2^q\Gamma(q+1)}\ll\lim_{q\to\infty}\frac{\delta^q}{2^q\Gamma(q+1)}=0.\]
Hence, we can examine that $|\mathcal{E}_{\msm}(\mr)|\ll|\mathcal{E}_{\osm}(\mr,m)|$ for all $m=1,2,\ldots,M$.
\end{proof}

Based on Theorem \ref{OSM_Multiple} and Corollary \ref{Corollary_Comparison}, we can examine some properties of the indicator function $\mathfrak{F}_{\msm}(\mr)$.

\begin{remark}[Availability of object detection]\label{RemarkM1}
In contrast to the $\mathfrak{F}_{\osm}(\mr,m)$, the value of $\mathfrak{F}_{\msm}(\mr)$ is independent of $\mathcal{A}_m$. Here, we consider the following quantity:
\[\mathcal{D}_{\msm}^{(1)}(x)=\left(\frac12-\frac{3\sqrt{3}}{16\pi}\right)J_0(k|x|)^2+J_2(k|x|)^2.\]
Then based on numerical computation, we can observe that the value of $\mathcal{D}_{\msm}^{(1)}(x)$ reaches its global maximum value when $x=0$ (see Figure \ref{PlotMSM1}) so that, the value of $\mathfrak{F}_{\msm}(\mr)$ will reach its global maximum value when $\mr=\mr_s\in D_s$. As a result, it will be possible to recognize the existence and location of the objects. In addition, two peaks of large magnitudes will appear in the neighborhood of the objects due to the influence of the factor $J_2(k|\mr-\mr_s|)^2$.
\end{remark}

\begin{figure}[h]
\begin{center}
\begin{tikzpicture}
\scriptsize
\begin{axis}
[legend style={at={(1,1)}},
width=\textwidth,
height=0.4\textwidth,
xmin=-0.1,
xmax=0.1,
ymin=0,
ymax=0.402,
legend cell align={left}]
\addplot[line width=1pt,solid,color=green!60!black] %
	table[x=x,y=y1,col sep=comma]{PlotMSM1.csv};
\addlegendentry{\scriptsize$f=\SI{2}{\giga\hertz}$};
\addplot[line width=1pt,solid,color=red] %
	table[x=x,y=y2,col sep=comma]{PlotMSM1.csv};
\addlegendentry{\scriptsize$f=\SI{6}{\giga\hertz}$};
\addplot[line width=1pt,solid,color=cyan!80!black] %
	table[x=x,y=y3,col sep=comma]{PlotMSM1.csv};
\addlegendentry{\scriptsize$f=\SI{10}{\giga\hertz}$};
\end{axis}
\end{tikzpicture}
\caption{\label{PlotMSM1}Plots of $|\mathcal{D}_{\msm}^{(1)}(x)|$ for $-0.1\leq x\leq0.1$ at $f=2,6,\SI{10}{\giga\hertz}$.}
\end{center}
\end{figure}

\begin{remark}[Influence of the factor $\mathcal{E}_{\msm}(\mr,m)$]\label{RemarkM2}
Similar to the $\mathcal{E}_{\osm}(\mr,m)$ in \eqref{Structure_Single}, the factor $\mathcal{E}_{\msm}(\mr)$ does not contribute to the identification of objects, and it disturbs the identification process by generating several artifacts. Thus, to examine the influence of $\mathcal{E}_{\msm}(\mr)$, we consider the following quantity:
\begin{align*}\mathcal{D}_{\msm}^{(2)}(x)=&\frac{3}{2\pi}\sum_{q=1}^{10^5}\frac{(-1)^q}{q}\sin\frac{2q\pi}{3}J_p(k|x|)^2\\
&+\frac{3}{2\pi}\sum_{q=1}^{10^5}\frac{(-1)^{q+1}}{q+2}\sin\frac{2(q+2)\pi}{3}J_q(k|x|)^2+\frac{3}{2\pi}\sum_{q=1,q\ne2}^{10^5}\frac{(-1)^{q-1}}{q-2}\sin\frac{2(q-2)\pi}{3}J_q(k|x|)^2.
\end{align*}
Based on the numerical computation, we can examine that $|\mathcal{D}_{\msm}^{(2)}(x)|\leq0.1$, refer to Figure \ref{PlotMSM2}. Thus, the factor $\mathcal{E}_{\msm}(\mr)$ will contribute to the generation of some artifacts with small magnitudes; however,  its value can be disregarded such that it will not disturb the recognition of the existence of objects.
\end{remark}

\begin{figure}[h]
\begin{center}
\begin{tikzpicture}
\scriptsize
\begin{axis}
[legend style={at={(1,1)}},
width=\textwidth,
height=0.4\textwidth,
ytick distance=0.1,
xmin=-0.1,
xmax=0.1,
ymin=0,
ymax=0.102,
legend cell align={left}]
\addplot[line width=1pt,solid,color=green!60!black] %
	table[x=x,y=y1,col sep=comma]{PlotMSM2.csv};
\addlegendentry{\scriptsize$f=\SI{2}{\giga\hertz}$};
\addplot[line width=1pt,solid,color=red] %
	table[x=x,y=y2,col sep=comma]{PlotMSM2.csv};
\addlegendentry{\scriptsize$f=\SI{6}{\giga\hertz}$};
\addplot[line width=1pt,solid,color=cyan!80!black] %
	table[x=x,y=y3,col sep=comma]{PlotMSM2.csv};
\addlegendentry{\scriptsize$f=\SI{10}{\giga\hertz}$};
\end{axis}
\end{tikzpicture}
\caption{\label{PlotMSM2}Plots of $|\mathcal{D}_{\msm}^{(2)}(x)|$ for $-0.1\leq x\leq0.1$ at $f=2,6,\SI{10}{\giga\hertz}$.}
\end{center}
\end{figure}

\begin{remark}[Comparing imaging performance with a single source OSM]\label{RemarkM3}
Based on Corollary \ref{Corollary_Comparison}, since $\mathcal{E}_{\msm}(\mr)$ has a smaller amplitude than $\mathcal{E}_{\osm}(\mr,m)$, the factor $\mathcal{E}_{\msm}(\mr)$ will generate fewer artifacts than $\mathcal{E}_{\osm}(\mr,m)$. Therefore, the imaging performance of $\mathfrak{F}_{\msm}(\mr)$ will be better than the one of $\mathfrak{F}_{\osm}(\mr,m)$ for all $m=1,2,\ldots,M$.
\end{remark}

\begin{remark}[Unique determination of objects]
Following Remarks \ref{RemarkM1} and \ref{RemarkM2}, the value of $\mathfrak{F}_{\msm}(\mr)$ will reach its local maxima when $\mr=\mr_s\in D_s$, $s=1,2,\ldots,S$. For visualization, we consider the following quantity:
\[\mathcal{D}_{\msm}(x)=\mathcal{D}_{\msm}^{(1)}(x)+\mathcal{D}_{\msm}^{(2)}(x),\]
where $\mathcal{D}_{\msm}^{(1)}(x)$ and $\mathcal{D}_{\msm}^{(2)}(x)$ are given in Remarks \ref{RemarkM1} and \ref{RemarkM2}, respectively. This is similar to the value of $\mathfrak{F}_{\msm}(\mr)$ in the presence of single object located at the origin. Based on the plots of $\mathcal{D}_{\msm}(x)$ in Figure \ref{PlotMSM}, we can examine the the value of $\mathcal{F}_{\msm}(x)$ reaches its maximum value at $x=0$. Therefore, we can numerically verify that the objects $D_s$ can be identified uniquely through the map of $\mathfrak{F}_{\msm}(\mr)$.
\end{remark}

\begin{figure}[h]
\begin{center}
\begin{tikzpicture}
\scriptsize
\begin{axis}
[legend style={at={(1,1)}},
width=\textwidth,
height=0.4\textwidth,
xmin=-0.1,
xmax=0.1,
ymin=0,
ymax=0.402,
legend cell align={left}]
\addplot[line width=1pt,solid,color=green!60!black] %
	table[x=x,y=y1,col sep=comma]{PlotMSM.csv};
\addlegendentry{\scriptsize$f=\SI{2}{\giga\hertz}$};
\addplot[line width=1pt,solid,color=red] %
	table[x=x,y=y2,col sep=comma]{PlotMSM.csv};
\addlegendentry{\scriptsize$f=\SI{6}{\giga\hertz}$};
\addplot[line width=1pt,solid,color=cyan!80!black] %
	table[x=x,y=y3,col sep=comma]{PlotMSM.csv};
\addlegendentry{\scriptsize$f=\SI{10}{\giga\hertz}$};
\end{axis}
\end{tikzpicture}
\caption{\label{PlotMSM}Plots of $|\mathcal{D}_{\msm}(x)|$ for $-0.1\leq x\leq0.1$ at $f=2,6,\SI{10}{\giga\hertz}$.}
\end{center}
\end{figure}

\section{Simulation results with multiple sources using synthetic and experimental dataset}\label{sec:6}
Here, we present the corresponding numerical simulation results using synthetic and experimental dataset. The simulation configuration is the same as that described in Section \ref{sec:4}, except that the range of the emitters is $\SI{0}{\degree}$ to $\SI{350}{\degree}$ with a step size of $\SI{10}{\degree}$. 

\begin{example}[Results using synthetic dataset in Cases $1$ and $2$]\label{Ex_Synthetic_Multiple1}
Figure \ref{Synthetic_Multiple_Case1} shows maps of $\mathfrak{F}_{\msm}(\mr)$ in Case $1$. By comparing with the results in Example \ref{Ex_Single_Case1}, it is possible to recognize the existence of $D_s$, and their outline shapes were successfully retrieved. Similarly, although values of $\mathfrak{F}_{\msm}(\mr)$ on $D_s$ are different, the existence and shapes of $D_s$ in Case $2$ are discernible, refer to Figure \ref{Synthetic_Multiple_Case2}.
\end{example}

\begin{figure}[h]
\begin{center}
\subfigure[$f=\SI{4}{\giga\hertz}$]{\includegraphics[width=.33\columnwidth]{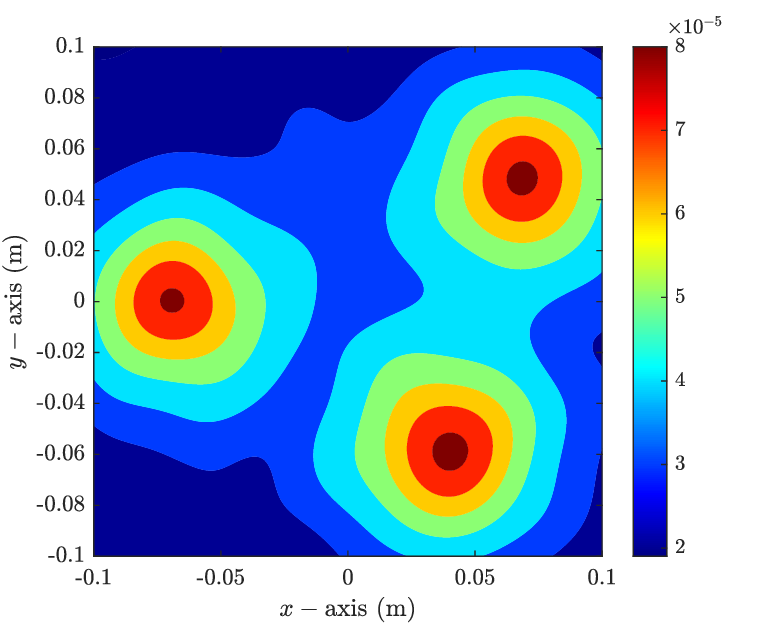}}\hfill
\subfigure[$f=\SI{8}{\giga\hertz}$]{\includegraphics[width=.33\columnwidth]{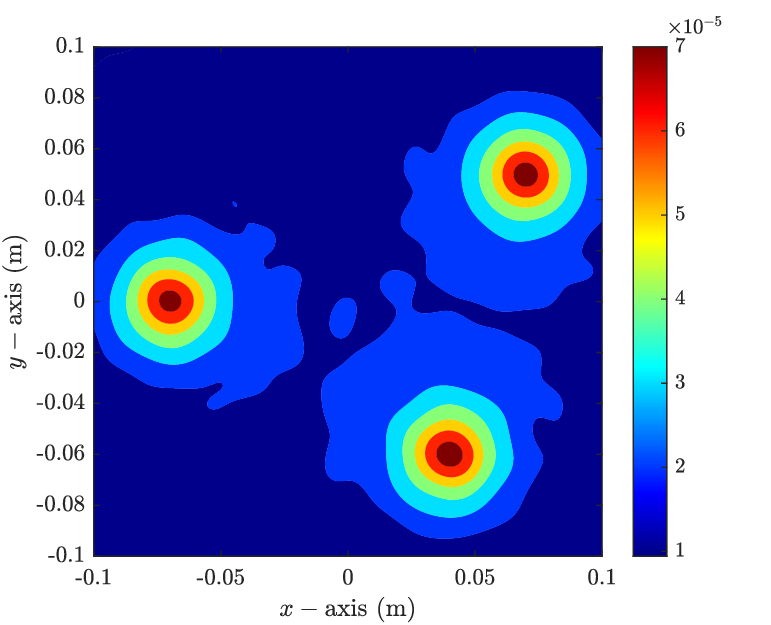}}\hfill
\subfigure[$f=\SI{12}{\giga\hertz}$]{\includegraphics[width=.33\columnwidth]{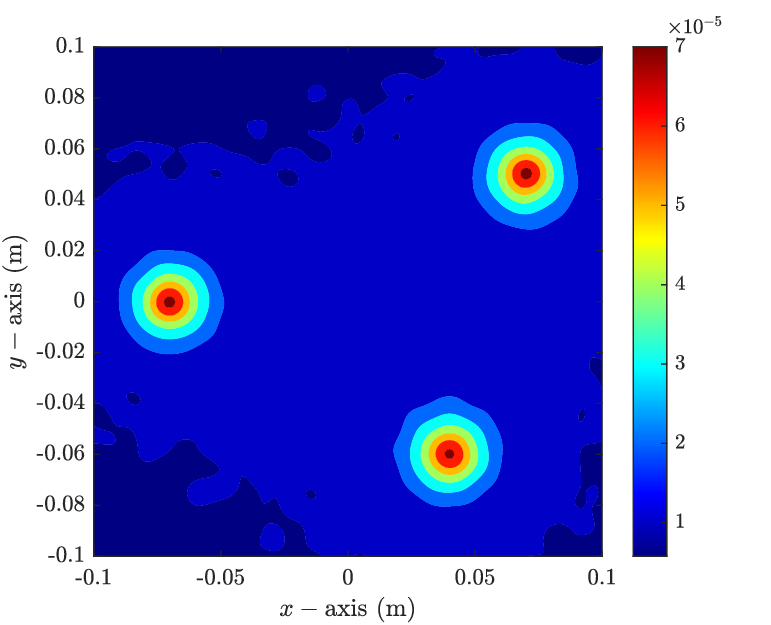}}
\caption{\label{Synthetic_Multiple_Case1}(Example \ref{Ex_Synthetic_Multiple1}) Maps of $\mathfrak{F}_{\msm}(\mr)$ in Case $1$.}
\end{center}
\end{figure}

\begin{figure}[h]
\begin{center}
\subfigure[$f=\SI{4}{\giga\hertz}$]{\includegraphics[width=.33\columnwidth]{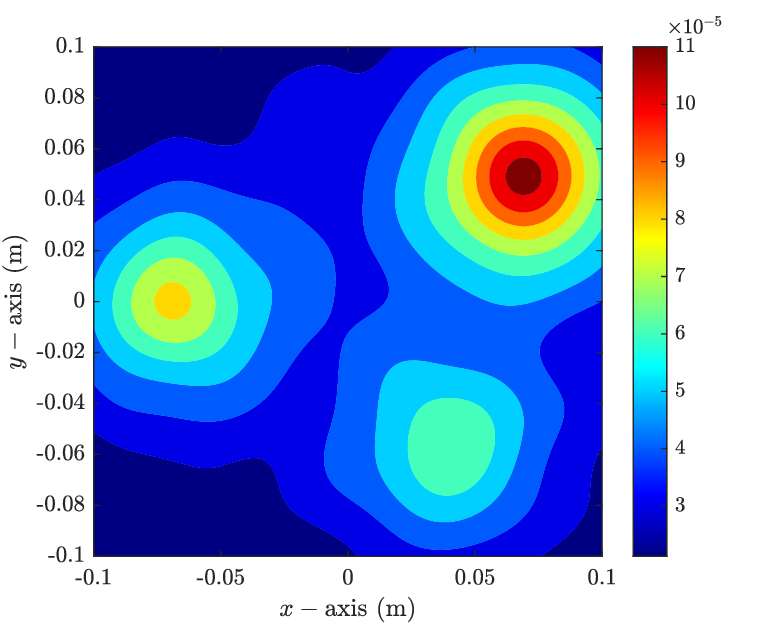}}\hfill
\subfigure[$f=\SI{8}{\giga\hertz}$]{\includegraphics[width=.33\columnwidth]{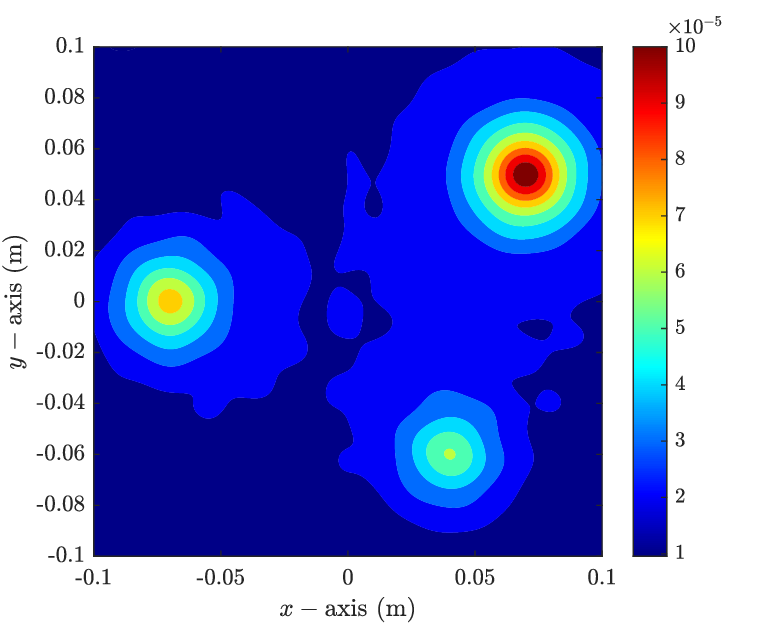}}\hfill
\subfigure[$f=\SI{12}{\giga\hertz}$]{\includegraphics[width=.33\columnwidth]{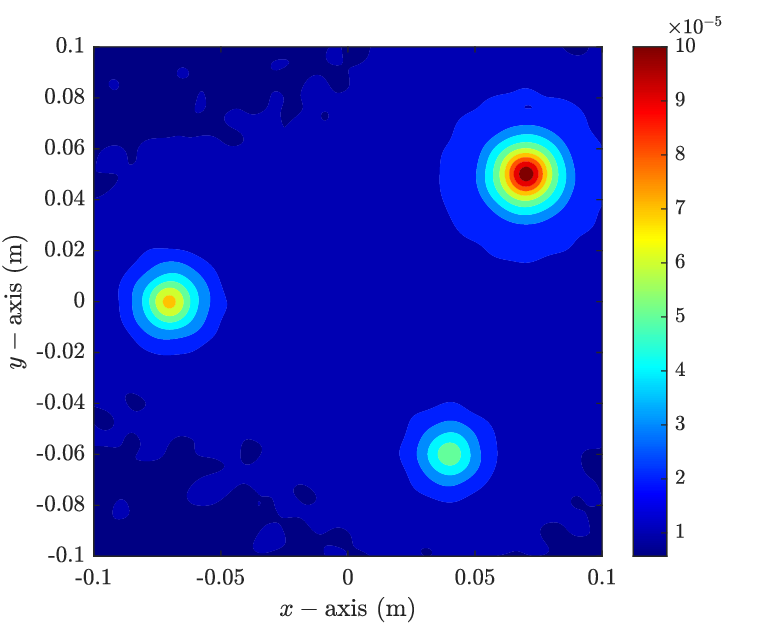}}
\caption{\label{Synthetic_Multiple_Case2}(Example \ref{Ex_Synthetic_Multiple1}) Maps of $\mathfrak{F}_{\msm}(\mr)$ in Case $2$.}
\end{center}
\end{figure}

\begin{example}[Results using synthetic dataset in Cases $3$ and $4$]\label{Ex_Synthetic_Multiple2}
Figure \ref{Synthetic_Multiple_Case3} shows the maps of $\mathfrak{F}_{\msm}(\mr)$ in Case $3$. In contrast to the Case $2$, the existence of $D_3$ cannot be recognized through the maps at $f=\SI{4}{\giga\hertz}$ and $f=\SI{8}{\giga\hertz}$. It is interesting to note that a circle with a very small magnitude appears in the neighborhood of $D_3$, but it is challenging to distinguish it from the artifacts. 

Figure \ref{Synthetic_Multiple_Case4} shows the maps of $\mathfrak{F}_{\msm}(\mr)$ in Case $4$. Similar phenomena to those in Figure \ref{Synthetic_Multiple_Case3} ($D_3$ cannot be recognized) can be observed, but since the value of $\mathfrak{F}_{\msm}(\mr)$ at $\mr_2$ is too small, it is very difficult to distinguish whether it is $D_2$ or an artifact.
\end{example}

\begin{figure}[h]
\begin{center}
\subfigure[$f=\SI{4}{\giga\hertz}$]{\includegraphics[width=.33\columnwidth]{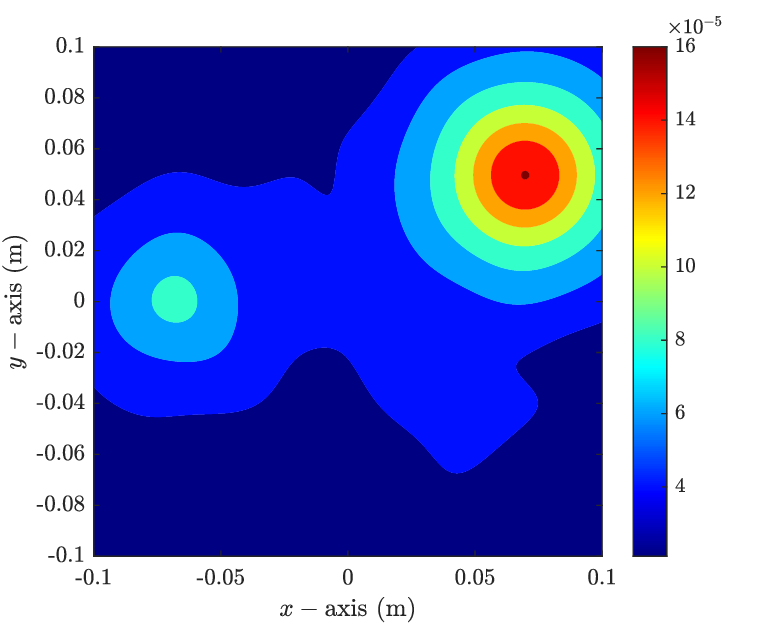}}\hfill
\subfigure[$f=\SI{8}{\giga\hertz}$]{\includegraphics[width=.33\columnwidth]{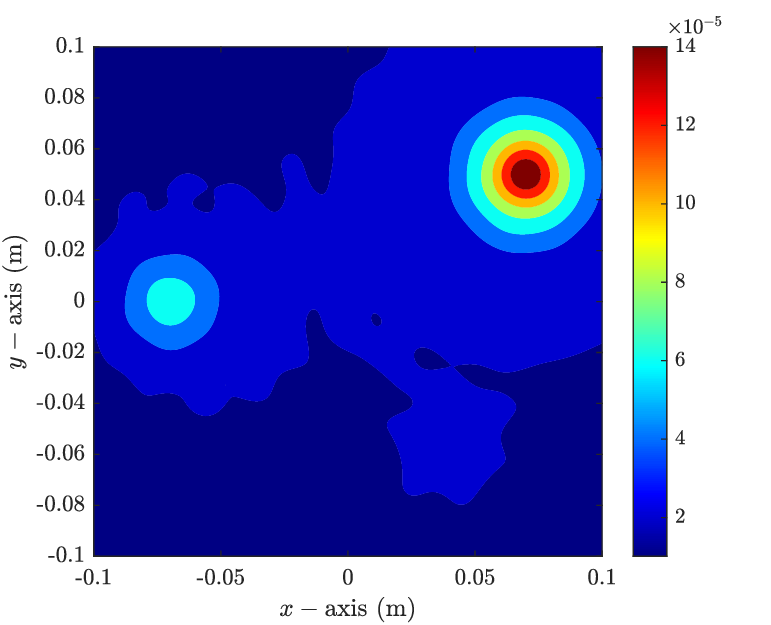}}\hfill
\subfigure[$f=\SI{12}{\giga\hertz}$]{\includegraphics[width=.33\columnwidth]{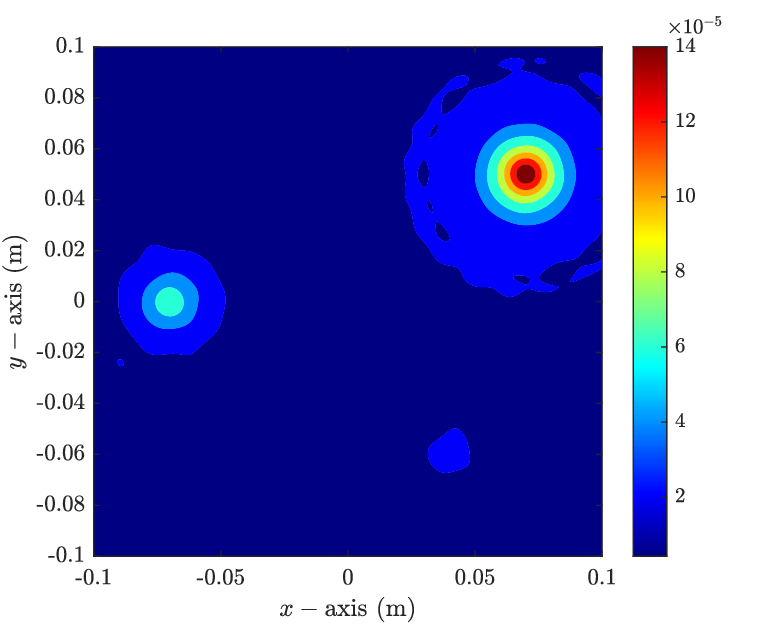}}
\caption{\label{Synthetic_Multiple_Case3}(Example \ref{Ex_Synthetic_Multiple2}) Maps of $\mathfrak{F}_{\msm}(\mr)$ in Case $3$.}
\end{center}
\end{figure}

\begin{figure}[h]
\begin{center}
\subfigure[$f=\SI{4}{\giga\hertz}$]{\includegraphics[width=.33\columnwidth]{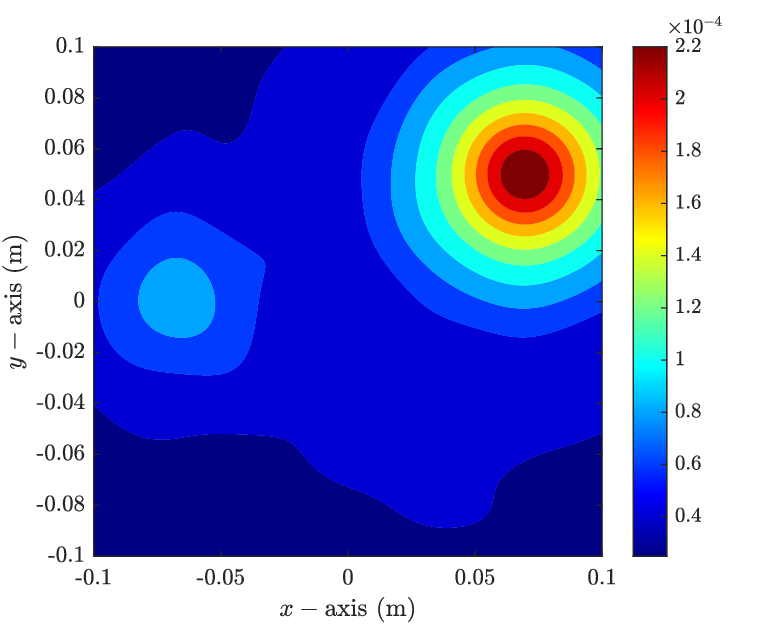}}\hfill
\subfigure[$f=\SI{8}{\giga\hertz}$]{\includegraphics[width=.33\columnwidth]{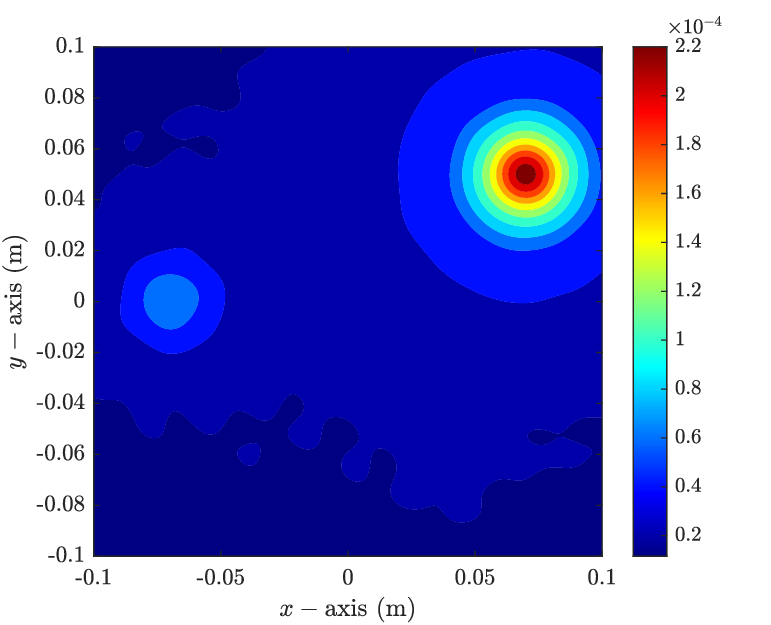}}\hfill
\subfigure[$f=\SI{12}{\giga\hertz}$]{\includegraphics[width=.33\columnwidth]{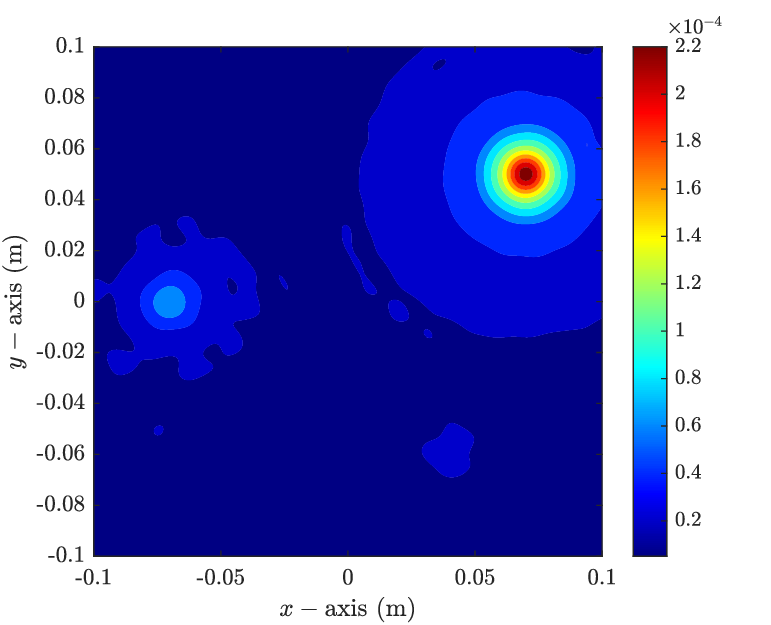}}
\caption{\label{Synthetic_Multiple_Case4}(Example \ref{Ex_Synthetic_Multiple2}) Maps of $\mathfrak{F}_{\msm}(\mr)$ in Case $4$.}
\end{center}
\end{figure}

\begin{example}[Results using Fresnel experimental dataset]\label{Ex_Fresnel_Multiple}
Figure \ref{Fresnel_Multiple} shows the maps of $\mathfrak{F}_{\msm}(\mr)$ with various frequencies. In contrast to the imaging results obtained with a single source, here, the existence of the object can be recognized with any frequency setting. However, the shape of the object cannot be identified if a low frequency is applied, e.g. $f=2,4,\SI{8}{\giga\hertz}$. Fortunately, the location and outline shape of the object can be retrieved successfully when a sufficiently high frequency is applied, i.e., $f=10,12,\SI{16}{\giga\hertz}$.
\end{example}

\begin{figure}[h]
\begin{center}
\subfigure[$f=\SI{2}{\giga\hertz}$]{\includegraphics[width=.33\columnwidth]{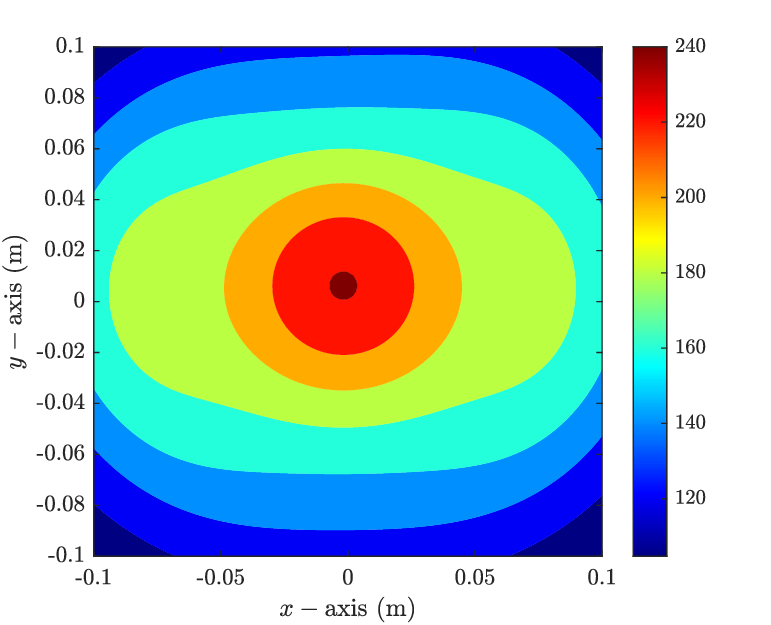}}\hfill
\subfigure[$f=\SI{4}{\giga\hertz}$]{\includegraphics[width=.33\columnwidth]{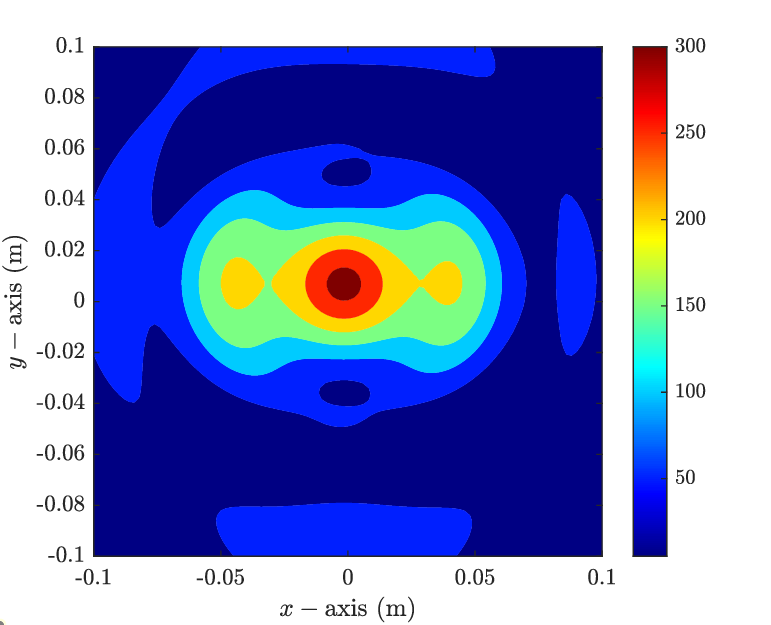}}\hfill
\subfigure[$f=\SI{8}{\giga\hertz}$]{\includegraphics[width=.33\columnwidth]{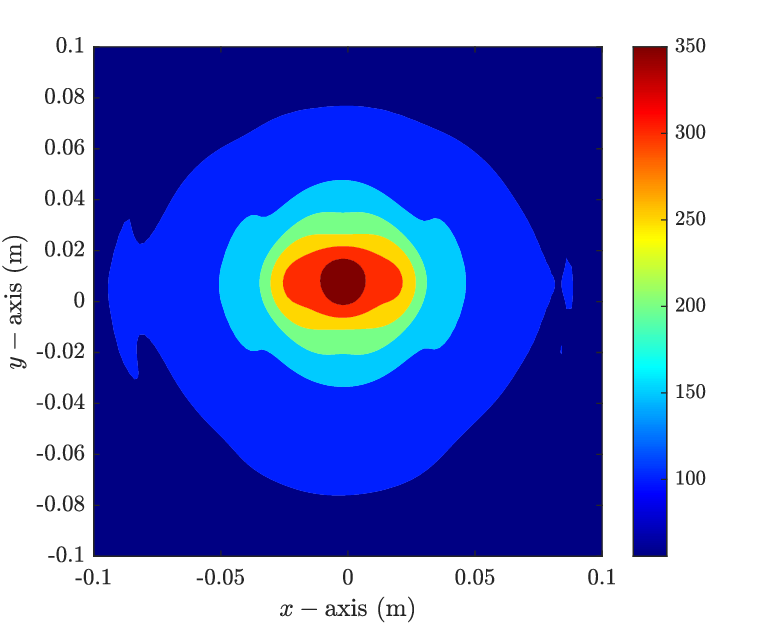}}\\
\subfigure[$f=\SI{10}{\giga\hertz}$]{\includegraphics[width=.33\columnwidth]{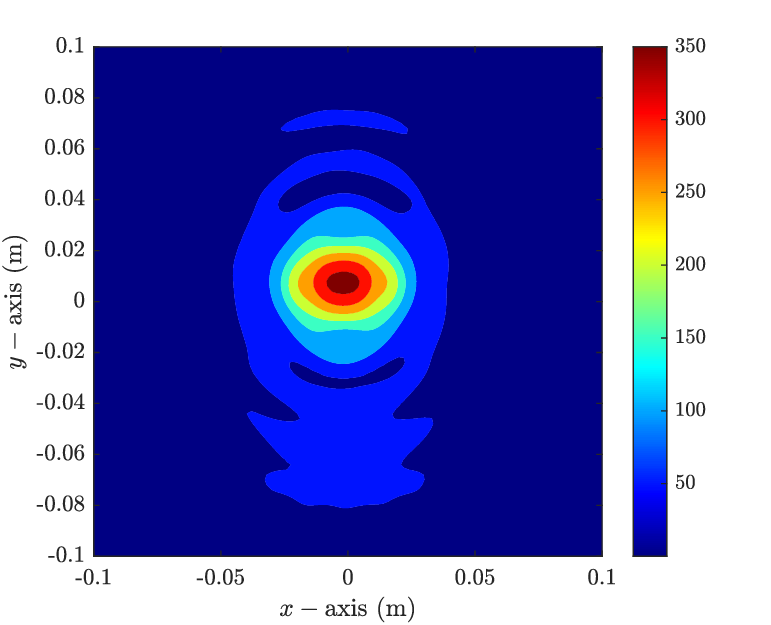}}\hfill
\subfigure[$f=\SI{12}{\giga\hertz}$]{\includegraphics[width=.33\columnwidth]{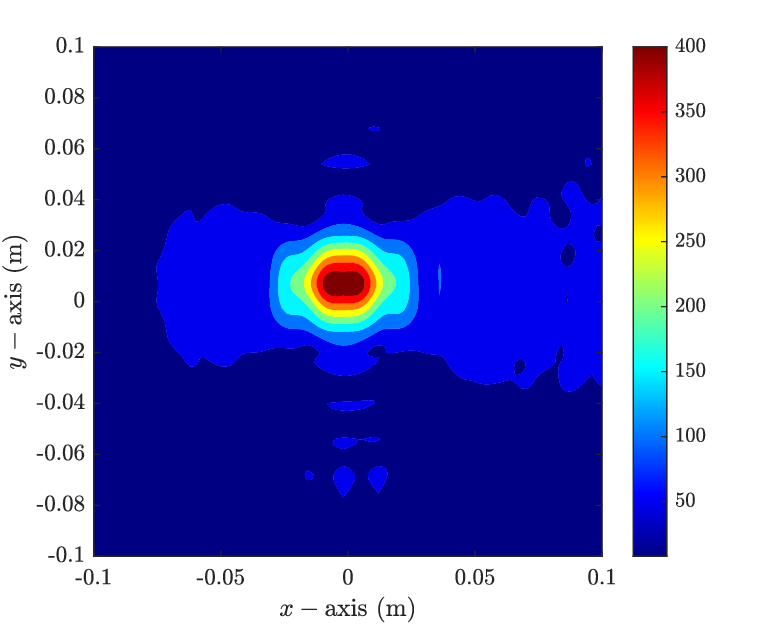}}\hfill
\subfigure[$f=\SI{16}{\giga\hertz}$]{\includegraphics[width=.33\columnwidth]{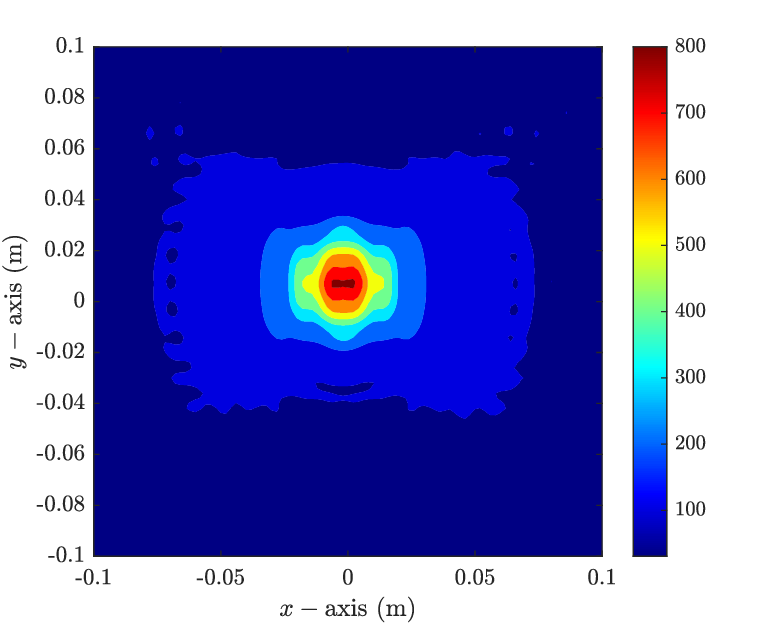}}
\caption{\label{Fresnel_Multiple} (Example \ref{Ex_Fresnel_Multiple}) Maps of $\mathfrak{F}_{\msm}(\mr)$.}
\end{center}
\end{figure}

\begin{example}[Further results using Fresnel experimental dataset]\label{Ex_Fresnel_MultipleFrequencies}
For the final example, we consider imaging with multiple sources and frequencies. Here, we define a set $F=\set{f:f=2,4,6,8,10,12,14,\SI{16}{\giga\hertz}}$, denote $\mathfrak{F}_{\msm}(\mr,f)$ as the imaging function $\mathfrak{F}_{\msm}(\mr)$ with the frequency of operation $f$, and introduce the following three imaging functions:
\[\mathfrak{F}_{\fsm}^{(1)}(\mr)=\bigg|\sum_{f\in F}\frac{\mathfrak{F}_{\msm}(\mr,f)}{\displaystyle\max_{\mr\in\Omega}|\mathfrak{F}_{\msm}(\mr,f)|}\bigg|,\quad\mathfrak{F}_{\fsm}^{(2)}(\mr)=\bigg|\sum_{f\in F}\mathfrak{F}_{\msm}(\mr,f)\bigg|,\quad\mathfrak{F}_{\fsm}^{(3)}(\mr)=\sum_{f\in F}|\mathfrak{F}_{\msm}(\mr,f)|.\]
\end{example}

Figure \ref{Fresnel_MultipleFrequencies} shows the maps of $\mathfrak{F}_{\fsm}^{(j)}(\mr)$, $j=1,2,3$. Note that the outline shape of the object can be recognized through only the map of $\mathfrak{F}_{\fsm}^{(1)}(\mr)$; however, its existence and location can be identified through the maps $\mathfrak{F}_{\fsm}^{(j)}(\mr)$, $j=1,2,3$.

\begin{figure}[h]
\begin{center}
\subfigure[Map of $\mathfrak{F}_{\fsm}^{(1)}(\mr)$]{\includegraphics[width=.33\columnwidth]{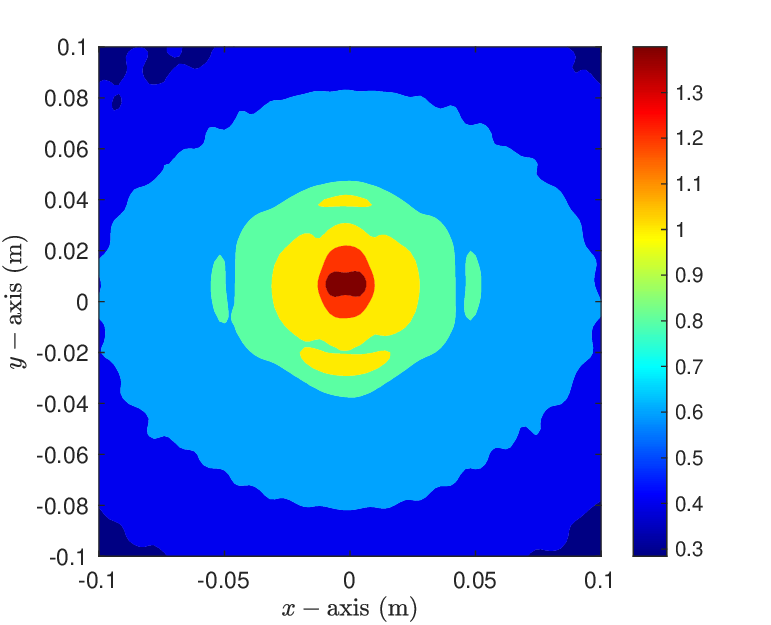}}\hfill
\subfigure[Map of $\mathfrak{F}_{\fsm}^{(2)}(\mr)$]{\includegraphics[width=.33\columnwidth]{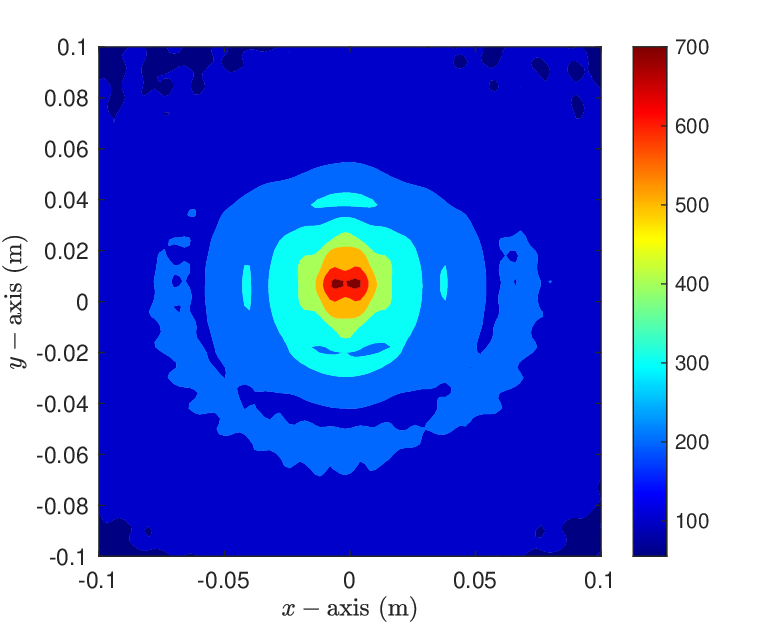}}\hfill
\subfigure[Map of $\mathfrak{F}_{\fsm}^{(3)}(\mr)$]{\includegraphics[width=.33\columnwidth]{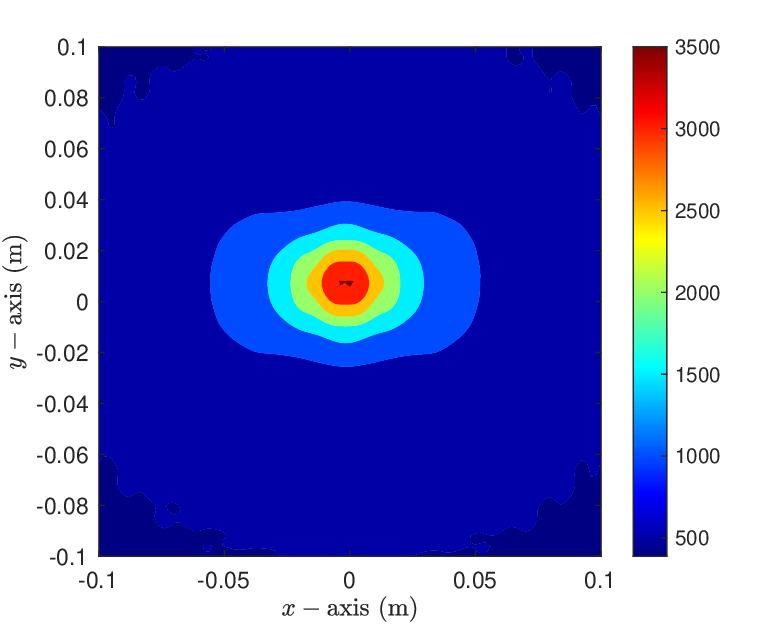}}
\caption{\label{Fresnel_MultipleFrequencies}(Example \ref{Ex_Fresnel_MultipleFrequencies}) Maps of $\mathfrak{F}_{\fsm}^{(j)}(\mr)$, $j=1,2,3$.}
\end{center}
\end{figure}

\section{Conclusion}\label{sec:7}
This paper has reported on the application of the OSM with a single source to identify the existence and outline shape of a small object in the TE polarization process from the Fresnel experimental dataset. In this study, based on the asymptotic expansion formula for the scattered field data in the presence of small objects, we designed an indicator function of the OSM, and we derived an accurate relationship between the indicator function and an infinite series of the Bessel function of the first kind.
This paper has discussed both the applicability and limitations of the OSM, and the results of numerical simulation conducted at various frequencies with the 2D Fresnel experimental dataset have been reported to verify the theoretical result.

Then, to improve the imaging performance of the OSM, we applied multiple sources and introduced an imaging function. Through careful analysis, we have demonstrated that the proposed indicator function can be expressed by an accurate relationship between the indicator function and an infinite series of the Bessel function of the first kind, and we have demonstrated that the proposed indicator function improves the imaging performance. Based on the theoretical result, we numerically demonstrated that it is possible to identify small objects uniquely.

In this study, we considered the application of the OSM in TE polarization from the 2D Fresnel experimental dataset. In the future, we plan to extend our work to the 3D Fresnel experimental dataset \cite{GSE}, and monostatic and bistatic measurement configurations \cite{KLP3,KLP4}.

\section*{Acknowledgments}
This research was supported by the National Research Foundation of Korea (NRF) grant funded by the Korea government (MSIT) (NRF-RS-2020-NR048569).

\end{document}